\newtheorem{theorem}{Theorem}[section] 
\newtheorem{lemma}[theorem]{Lemma}
\newtheorem{proposition}[theorem]{Proposition}
\newtheorem{corollary}[theorem]{Corollary}
\newtheorem{definition}[theorem]{Definition}
\theoremstyle{definition} 
\newtheorem{remark}[theorem]{Remark}
\newcommand{\conv}{\mathrm{conv}}
\newcommand{\loc}{\mathrm{loc}}
\newcommand{\cc}{\subset\!\subset}
\newcommand{\dist}{\operatorname{dist}} \newcommand{\supp}{\operatorname{supp}}
\newcommand{\e}{\varepsilon}
\newcommand\ecke{\mathop{\hbox{\vrule height 7pt width .3pt depth 0pt \vrule
height .3pt width 5pt depth 0pt}}\nolimits}
\newcommand{\R}{\mathbb{R}} 
\newcommand{\N}{\mathbb{N}} 
\renewcommand{\d}{\mathrm{d}} \renewcommand{\L}{\mathbb{L}}
 \newcommand{\M}{\mathcal{M}}
 \newcommand{\id}{\mathrm{Id}}
 \newcommand\sgn{\mathrm{sgn}}
\newcommand{\diam}{\mathrm{diam}\,} 
 \renewcommand{\H}{\mathcal{H}}
\renewcommand{\L}{{\mathcal L}}
\newcommand\wsto{\stackrel{*}{\rightharpoonup}}
\renewcommand{\div}{\mathrm{div}\,}
\newcommand{\rid}{{\mathcal R}_{v_0}}
\newcommand{\curl}{\mathrm{curl}\,}
\newcommand{\cof}{\mathrm{cof}\,}
\newcommand{\eps}{\varepsilon}
\newcommand{\Lm}{\mathcal{L}}%Lebesgue measure
\DeclareMathOperator*{\argmax}{arg\,max}
\DeclareMathOperator*{\argmin}{arg\,min}
\def\widebreve{\mathpalette\wide@breve}
\def\wide@breve#1#2{\sbox\z@{$#1#2$}%
     \mathop{\vbox{\m@th\ialign{##\crcr
\kern0.08em\brevefill#1{0.8\wd\z@}\crcr\noalign{\nointerlineskip}%
                    $\hss#1#2\hss$\crcr}}}\limits}
\def\brevefill#1#2{$\m@th\sbox\tw@{$#1($}%
  \hss\resizebox{#2}{\wd\tw@}{\rotatebox[origin=c]{90}{\upshape(}}\hss$}
\title{Connecting disclinations by ridges}
\date{\today} 
\author[P.~Gladbach]{Peter Gladbach}
\author[H.~Olbermann]{Heiner Olbermann} 
\address[Peter Gladbach]{Institut f\"ur Angewandte Mathematik, Universit\"at Bonn, 53115 Bonn, Germany}
\address[Heiner Olbermann]{Institut de Recherche en Math\'ematique et Physique, UCLouvain, 1348 Louvain-la-Neuve, Belgium}
\email[Peter Gladbach]{gladbach@iam.uni-bonn.de}
\email[Heiner Olbermann]{heiner.olbermann@uclouvain.be}
\begin{document}

\begin{abstract}
  We consider a thin elastic sheet with a finite number of disclinations in the variational framework of the F\"oppl--von K\'arm\'an approximation. Under the non-physical assumption that the out-of-plane displacement is a convex function, we prove that minimizers display ridges between the  disclinations. We prove the associated energy scaling law with upper and lower bounds that match up  to logarithmic factors in the thickness of the sheet. One of the key estimates in the proof that we consider  of independent interest is a generalization of the monotonicity property of the Monge-Amp\`ere measure.
\end{abstract}

\maketitle

\section{Introduction}

The present article draws its motivation from the mathematical analysis of strongly compressed  thin elastic structures, corresponding to the \emph{crumpling} of paper. This  phenomenon has received a lot of attention in the physics literature over the last  decades, see e.g.~\cite{PhysRevE.71.016612,RevModPhys.79.643,PhysRevLett.87.206105,Lobkovsky01121995,witten1993asymptotic,PhysRevLett.80.2358,Cerda08032005,pocivavsek2008stress}.
Viewing the crumpled configuration as a minimizer of a free energy, one expects from the everyday experience of crumpling paper that the minimizer should be characterized  by a complex network of one-dimensional ridges that meet in vertices, with the elastic energy concentrated in this network.

Closer inspection  shows that elastic energy should be concentrated only in the ridges, the formation of vertices being energetically less costly. As the thickness of the sheet $h$ is sent to 0 with all other parameters in the setting remaining fixed, one expects that a finite number of ridges should carry the elastic energy, which scales with $h^{5/3}$. This conjecture has been supported by the lower bounds proved by Venkataramani in \cite{MR2023444}, where a single ridge is considered as the result of clamping an elastic  sheet  along the lateral boundaries of a bent frame. Venkataramani's lower bound holds in  the F\"oppl--von K\'arm\'an approximation, and was later  upgraded by Conti and Maggi \cite{MR2358334} to a lower bound with the same scaling in the geometrically fully nonlinear setting. The latter authors also provide an upper bound with the same scaling $h^{5/3}$ for a much more general situation,  in the form of an approximation result for piecewise affine isometric maps.

The boundary conditions considered for a single ridge in \cite{MR2358334,MR2023444} are such that any deviation from the sharp ridge satisfying these conditions involves stretching of the sheet, and is hence very costly. This fact is at the heart of the respective proofs.  In the sequel, we will call boundary conditions with this feature \emph{tensile}. 

\medskip

From a mathematical point of view, the next step in the asymptotic analysis would be to prove the \emph{emergence} of ridges without enforcing them by tensile boundary conditions, and an associated lower bound with the expected scaling. For example, only assuming that the elastic sheet is confined by a container whose diameter is much smaller than that of the sheet, a setting that we will refer to as the ``crumpling problem'' from now on, is an extremely difficult and up to now entirely open problem.

One way of understanding the difficulty is the following: If boundary conditions and other constraints that define  the problem allow for a large class of \emph{short} maps (i.e., deformations $y: \Omega\to \R^3$ that satisfy $\nabla y^T\nabla y\leq \id_{2\times 2}$ in the sense of positive definite matrices), then by the famous Nash-Kuiper theorem \cite{MR0065993,MR0075640} there exist very many competitors that are arbitrarily close to isometric immersions. Since the distance of the induced metric from that of an isometric immersion is the leading order term in the typical elastic energy
  \begin{equation}\label{eq:31}
    I_h(y) := \int_\Omega |\nabla y^T\nabla y-\id_{2\times 2}|^2+|\nabla^2 y|^2\d x\,,
  \end{equation}
 one needs  quantitative lower bounds for the bending energy for this large set of competitors. % The expectation is that they will be forced to have very large bending energy, as in the constructions of Nash and Kuiper. However, the problem is how to 

\medskip

A simpler setting, which at first sight seems well adapted to investigate the emergence of ridges, is an elastic sheet featuring two \emph{disclinations} (metric defects). The case of one disclination has  been investigated in \cite{Olbreg,Olbdisc,olbermann2018shape}, where, aside from an energy scaling law, it has been shown that minimizers converge to a conical configuration as the thickness of the sheet approaches zero. In the case of two  disclinations, one expects approximately conical shapes in the neighborhood of each of them, and the formation of a ridge between  them.\footnote{This comes with the caveat that the formation of a ridge in such a situation can be avoided, unless  further constraints or boundary conditions are introduced into  the problem, see Section \ref{sec:counterexample} in the appendix. In the statement of Theorem \ref{thm:main}, this additional assumption is given by the Dirichlet boundary values $v=0$ on $\partial\Omega$.} Since one single ridge is expected at a known position, it seems that this setting should be much more accessible than the full crumpling problem as described above. Alas, also in this setting the derivation of a lower bound with the conjectured $h^{5/3}$ scaling seems to be prohibitively difficult. To the best of the authors' knowledge, this also remains an entirely open problem.

\medskip

In the present article we will solve the  problem we have just described under an additional non-physical assumption: Working in the F\"oppl--von K\'arm\'an approximation, which distinguishes between the ``in-plane'' and ``out-of-plane'' components of the deformation. We assume that the latter is \emph{convex}. Under this assumption, we are able to show the emergence of a ridge between two  disclinations, and an associated energy scaling law with upper and lower bounds that match up to logarithmic factors in  $h$.

\medskip

More precisely, we work with a finite number of disclinations, and  show that up to logarithmic factors, the energy of the ridges between them scales with $h^{4/3}$. As is to be expected, the energy under the convexity assumption is larger than without. In Section \ref{sec:ub-non-convex}, we show that an energy bounded by  $Ch^{5/3}$ can be achieved  in the absence of the constraint of convexity of the out-of-plane component. This means that our results constitute a rigorous proof of the nonconvexity of the optimal out-of-plane displacement for small enough $h$ (see Corollary \ref{cor:min-non-convex}).

% An  starting point for such lower bounds can be the consideration of (a suitable linearization of) the Gauss curvature of the deformed sheet. The guiding principle is that the
% (linearized) Gauss curvature is controlled by both the membrane and the bending
% energy, in different function spaces. The boundary conditions can be used to
% show that the Gauss curvature is bounded from below in a certain  space ``in
% between'' in the sense of interpolation. 

\newcommand{\bu}{\mathbf{u}}

\subsection{Statement of the main result}
Let $\Omega\subseteq \R^2$ be convex, open, bounded with $C^{1,1}$ boundary, $\{a_1,\dots, a_N\}\subseteq\Omega$ a set of disclination vertices,  $\sigma_i>0$ for $i=1,\dots,N$, and  $\overline \mu:=\sum_{i=1}^N \sigma_i\delta_{a_i}\in \M(\overline \Omega)$ a disclination measure.

  For a convex function $u: \Omega\to \R$, let $\partial^- u(x)$ denote the subdifferential of $u$ at $x$, 
\[
\partial^- u(x) := \{p\in \R^n: p\cdot (y-x)+u(x)\leq u(y)\text{ for all }y \in \Omega\}
\]
and let 
$\mu_u$ denote its Monge-Amp\`ere measure defined for a Borel set $E\subseteq \Omega$ as
\[
\mu_u(E) := \L^2\left(\partial^- u(E)\right)\,,
\]
where $\L^2$ denotes the Lebesgue measure on $\R^2$.
By \cite[Theorem 2.13]{figalli2017monge}, there exists a unique convex solution  $v_0$ of  the Monge-Amp\`ere equation
  \begin{equation}\label{eq: MAD}
  \left\{\begin{split}
    \mu_{v_0}&=\overline \mu \text{ in }\Omega\\
    v_0&=0 \text{ on }\partial\Omega\,.
  \end{split}\right.
\end{equation}
By Lemma \ref{lemma: 2d ridge} below,  $v_0\not\in W^{2,2}(\Omega)$ whenever $N\geq 1$, since a ridge appears between two vertices.

We define the F\"oppl--von K\'arm\'an energy for deformations $(\bu,v)\in W^{1,2}(\Omega;\R^2)\times W^{2,2}(\Omega)$, with the reference configuration given by the ``singular'' configuration $v_0$, 
  \begin{equation}\label{eq:1}
E_h(\bu,v)=\int_\Omega \left|\nabla \bu+\nabla \bu^T+\nabla v\otimes\nabla v-\nabla v_0\otimes\nabla v_0\right|^2+h^2|\nabla^2 v|^2\d x\,.
\end{equation}
This functional can be viewed as a partial linearization of the geometrically nonlinear functional  \eqref{eq:31}, with  metric defects. A rigorous relation between these energies can be established in the sense of $\Gamma$-convergence, see \cite{MR2210909}.
In the definition of the energy \eqref{eq:1}, we have chosen to absorb the in-plane deformation $\bu_0$ that would yield vanishing stress in the reference configuration via $\nabla \bu_0+\nabla \bu_0^T+\nabla v_0\otimes \nabla v_0=0$, which exists, into $\bu$. 
Let
\[
  \mathcal A:=\left\{(\bu,v)\in W^{1,2}(\Omega;\R^2)\times W^{2,2}(\Omega): v\text{ convex, }  v=0 \text{ on }\partial \Omega\right\}\,.
\]
We will prove:

\begin{theorem}
  \label{thm:main}
  There exist $h_0,C>0$ that only depend on $\Omega$ and $\overline\mu$ such that
  \[
    \frac{1}{C}h^{4/3} \left(\log \frac{1}{h}\right)^{-1/2} \leq   \inf_{(\bu,v)\in\mathcal A} E_h(\bu,v)\leq C h^{4/3}
  \]
  for all $h<h_0$.
\end{theorem}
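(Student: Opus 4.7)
The proof naturally splits into an explicit construction for the upper bound and a functional-analytic argument for the lower bound.

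\textbf{Upper bound.} I would build a competitor $(\mathbf{u}_*, v_*)\in\mathcal{A}$ by leaving $v_0$ intact outside a tubular neighborhood of width $\delta$ of its (one-dimensional) ridge network and replacing it inside by a convexity-preserving mollification at scale $\delta$, with very small inner cutoff of size $\sim h$ around each disclination vertex so that the conical apex contribution is not spoiled. The bending contribution from the ridge tube is of order $h^2\delta^{-1}$ and from the vertex cutoffs of order $h^2\log(1/h)$. Choosing $\mathbf{u}_*$ to be the minimizer of the stretching term given $v_*$ and using the identity $\mathrm{curl}\,\mathrm{curl}(\nabla w \otimes \nabla w) = -2\det\nabla^2 w$ for smooth $w$ together with the Helmholtz-type decomposition of symmetric $2\times 2$ tensors, this infimum is bounded by $C\|\mu_{v_*} - \bar\mu\|_{H^{-2}(\Omega)}^2$. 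Along a smoothed straight ridge between essentially linear profiles the smoothed Hessian is rank one so the MA excess vanishes; the residual contributions concentrated near each vertex are of order $\delta$ in $H^{-2}$ (hence $\delta^2$ after squaring), through the critical Sobolev embedding $H^2\hookrightarrow C^{0,\alpha}$ in two dimensions. Balancing $h^2\delta^{-1}$ against $\delta^2$ at $\delta\sim h^{2/3}$ yields $E_h(\mathbf{u}_*,v_*) \leq C h^{4/3}$.

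\textbf{Lower bound.} For any $(\mathbf{u}, v)\in\mathcal{A}$, the same identity applied in reverse gives
\[
E_h(\mathbf{u},v) \geq c\bigl(\|\mu_v - \bar\mu\|_{H^{-2}(\Omega)}^2 + h^2 \|\nabla^2 v\|_{L^2(\Omega)}^2\bigr),
\]
with $\mu_v = \det\nabla^2 v \geq 0$ by convexity. The task thus reduces to a purely functional inequality for convex $v\in W^{2,2}(\Omega)$ with $v|_{\partial\Omega}=0$. I would combine two ingredients: first, the pointwise convexity bound $\mu_v \leq \tfrac12|\nabla^2 v|^2$, which converts $L^1$-mass of $\mu_v$ into bending; second, to force $\mu_v$ to be spread out rather than atomic, the generalized Monge--Amp\`ere monotonicity flagged in the abstract as an independently interesting estimate. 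I expect this refinement to give localized, quantitative control on the mass $\mu_v(B_\rho(a_i))$ in terms of the oscillation of $v$ relative to the conical profile at $a_i$, with a logarithmic loss inherited from the endpoint embedding $H^2\hookrightarrow C^0$ in two dimensions. Testing $\mu_v-\bar\mu$ against a family of bumps localized near each vertex at scale $\rho$ and feeding in this monotonicity yields a dichotomy: either the bending is $\gtrsim h^{-2/3}(\log 1/h)^{-1/2}$, or the $H^{-2}$-discrepancy is $\gtrsim h^{4/3}(\log 1/h)^{-1/2}$, so that optimizing $\rho\sim h^{2/3}$ closes the argument with the claimed $E_h\geq c h^{4/3}(\log 1/h)^{-1/2}$.

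The principal difficulty is the generalized Monge--Amp\`ere monotonicity itself: classical monotonicity asserts only integrated mass comparison under a convex order on the Dirichlet data, whereas here one needs a local, quantitative, $W^{2,2}$-compatible statement that captures the precise logarithmic loss from the two-dimensional critical Sobolev embedding. A subsidiary obstacle is the engineering of a convexity-preserving ridge smoothing in the upper bound that cancels the Monge--Amp\`ere excess along straight ridges while matching to the conical profiles near each vertex without producing spurious MA mass at intermediate scales.
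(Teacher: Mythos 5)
Your construction for the upper bound matches the paper's in substance: convexity-preserving smoothing of the ridges of $v_0$ at scale $\delta$, use of the identity $\curl\curl(\nabla v\otimes\nabla v)=-2\det\nabla^2 v$ together with the equality between $\min_\bu\|e(\bu)+m\|_{L^2}$ and $\|\curl\curl m\|_{W^{-2,2}}$ (Lemma \ref{lem:W-22L2equality}), concentration of the Monge--Amp\`ere excess near the vertices, and a balance $\delta\sim h^{2/3}$ giving $h^{4/3}$. The paper uses a single scale $\e\sim h^{2/3}$ everywhere including the vertices, whereas you propose an inner cutoff $\sim h$ at the vertices, but both yield a sub-leading $h^2\log(1/h)$ contribution there. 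This part of your proposal is essentially the paper's argument.

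\textbf{Lower bound.} Here there is a genuine gap. After the reduction to $\|\mu_v-\overline\mu\|^2_{W^{-2,2}}+h^2\|\nabla^2 v\|^2_{L^2}\lesssim E_h$, the mechanism you propose --- testing $\mu_v-\overline\mu$ against bumps localized near each vertex $a_i$ at scale $\rho$, combining with the pointwise bound $\mu_v\leq\tfrac12|\nabla^2 v|^2$, and then running a dichotomy --- does not see the ridge and therefore cannot distinguish the $h^{4/3}$ scaling from $h^2\log\frac1h$. Indeed, a purely vertex-local analysis is insensitive to the Dirichlet boundary condition, and Remark \ref{rem:nobc} in the paper shows that without that boundary condition there are convex competitors with energy $\sim h^2\log\frac1h$ whose Monge--Amp\`ere measure \emph{equals} $\overline\mu$ exactly; any argument that only estimates $\mu_v(B_\rho(a_i))$ would produce a bound also satisfied by those competitors. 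Quantitatively: a bump at scale $\rho$ has $\|\phi_\rho\|_{W^{2,2}_0}\sim\rho^{-1}$, so with $\|\mu_v-\overline\mu\|_{W^{-2,2}}\lesssim h^{2/3}$ one only gets $|\mu_v(B_\rho(a_i))-\sigma_i|\lesssim h^{2/3}\rho^{-1}$, which carries no information at your chosen scale $\rho\sim h^{2/3}$.

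What actually drives the lower bound is not the mass near the vertices but the gradient jump of $v$ \emph{across} the ridge $[a_ia_j]$. The paper first establishes suboptimal global stability $\|v_h-v_0\|_{L^\infty}\lesssim h^\gamma$ (Proposition \ref{prop:suboptimal}, requiring Lemmata \ref{lem:stability}--\ref{lem:vhbound}, a step your proposal omits entirely), then the sharp conical estimate (Proposition \ref{prop:conical_estimate}) controlling $|v-\hat v|$ by $\sqrt{|x|F(v)}$ with $F(v)=\int|x|\d\mu_v$, and crucially the ridge comparison (Proposition \ref{prop: ridge}, which is where the generalized monotonicity Proposition \ref{prop: comparison} with a \emph{concave} weight $\phi$ vanishing at $\{a_i,a_j\}$ is used) showing $v_h$ is close to the two-plane profile $w_h$ on all of $R_{ij}$. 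This pins the gradient of $v_h$ to be near $p^\pm$ on either side of a strip of thickness $l(h)\sim h^{2/3}(\log\frac1h)^{1/2}$, and then a Cauchy--Schwarz estimate on $\int_{-l}^{l}\partial_2^2 v_h\,\d x_2\geq\tfrac14|\tilde p^+-\tilde p^-|$ converts the order-one jump across the thin layer into bending $\gtrsim h^{4/3}(\log\frac1h)^{-1/2}$. The pointwise inequality $\mu_v\leq\tfrac12|\nabla^2 v|^2$ plays no role; it would only give $\|\nabla^2 v\|^2\gtrsim 1$, hence $h^2$, which is too weak. To repair your sketch you would need to move the localization from the vertex balls to the strip around the ridge, apply the generalized monotonicity with the concave weight $\dist(\cdot,\{a_i,a_j\})$ rather than a bump, and replace the $\det\leq|\cdot|^2$ estimate with the transition-layer Cauchy--Schwarz argument.
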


\begin{remark}
  \begin{itemize}
  \item[(i)] The sources of the logarithmic factor in the  lower bound are discussed in Remark  \ref{rem:log_lb} below. For a sketch of the idea of the lower bound, see the beginning of Section \ref{sec:lb}.
\item[(ii)] The boundary conditions $v=0$ on $\partial\Omega$ are necessary in order to ``force'' the emergence of a ridge between the vertices defined by $\overline \mu$. If these boundary conditions are omitted, then it is possible to achieve an energy that scales with $h^2\log\frac{1}{h}$, which is much smaller than $h^{4/3}$. For the construction of the test functions achieving the former, see Section \ref{sec:counterexample} of the appendix. 
\end{itemize}

\end{remark}

\subsection{The method of proof: Gauss curvature and convexity}
As should be clear from the discussion so far, of the two inequalities  in Theorem \ref{thm:main}, the lower bound is the more interesting and difficult one to prove. Its proof is based on a principle  that has already been used  in \cite{Olbreg,Olbdisc,olbermann2018shape,olber2019crump,gladbach2024variational}. In these papers lower bounds for the elastic energy are derived starting from a certain control over the (partially linearized) Gauss curvature of the deformed surface, which is obtained either directly from the constraints in the problem or from energetic considerations. The cited papers do this in settings that produce approximately conical configurations, which are characterized by an elastic energy that scales with $h^2\log \frac{1}{h}$.

\medskip

The geometric idea is that the leading order term in the elastic energy (distance of the induced metric from that of an isometric immersion) controls the Gauss curvature. Knowledge about the Gauss curvature can be translated into  information about the image of the normal map. This information can in turn  be exploited to obtain lower bounds for the second gradient of the deformation. In this article, we work with the partially linearized Gauss curvature, i.e. the Monge-Amp\`ere measure $\mu_v = \det\nabla^2 v$, in order to gain information about the image of $\nabla v$, in order to obtain lower bounds for the bending energy $h^2 \int |\nabla^2 v|^2\,\d x$.

\medskip

If one assumes convexity of the out-of-plane displacement $v$ and hence non-negativity of the Monge-Amp\`ere measure $\mu_v$, this chain of arguments can be made more powerful by noting that knowledge about the Monge-Amp\`ere measure  directly gives some information about the deformation itself via the monotonicity of the subdifferential of convex functions and the Alexandrov maximum principle. In fact, our proof uses as an essential tool the following generalization of the monotonicity property of the Monge-Amp\`ere measure, which we consider of interest in its own right (see Proposition \ref{prop: comparison} below):
If   $\Omega\subseteq \R^n$ is a convex bounded open domain, and $\phi\in C^0(\overline \Omega)$ a non-negative concave function, then whenever $u,v\in C^0(\overline \Omega)$ are convex with $u=v$ on $\partial \Omega$ and $u\geq v$, we have
  \begin{equation}
    \int_\Omega\phi\d \mu_u \leq \int_\Omega\phi\d \mu_v\,.
  \end{equation}

The classical monotonicity property is the special case $\phi=1$. % In contrast to the proofs of the monotonicity property that one finds in textbooks on convex analysis (see e.g.~\cite{MR0463994,MR0274683}), our proof is based on approximation by smooth convex functions and differential calculus.
The proof  will be given in Section \ref{sec:comparison_principle} below. 

\medskip

The combination of the F\"oppl--von K\'arm\'an approximation with the assumption of convexity of the out-of-plane component has been studied  in \cite{gladbach2024variational}, where a partially clamped elastic sheet in the shape of the sector of a disk has been considered. The method of proof used in that paper is adapted to the particular setting considered there.  The tools we will present here are different and more versatile.

\subsection{Scientific context}

Crumpling is only one of many possible \emph{patterns} that thin elastic sheets may form under the influence of external forces. Explaining these patterns as the result of  energy minimization has proved to be a fruitful principle within the  physics and mathematics community, see e.g.~the overview article by Witten \cite{RevModPhys.79.643} or the book by Audoly and Pomeau \cite{audoly2010elasticity}. In the calculus of variations,  two types of results have emerged: Firstly, the derivation of reduced models of three-dimensional nonlinear elasticity in the limit of small film thickness by means of $\Gamma$-convergence. Notable successes include the derivation of a membrane model by Le Dret and Raoult \cite{le1995nonlinear} as well as  the derivation of nonlinear plate models by Friesecke, James and M\"uller \cite{MR1916989,MR2210909}. Following up on these groundbreaking works, there have also been $\Gamma$-convergence results for shells \cite{MR2796137,MR2731157,MR1988135,MR4076073}. In this context, significant research activity has developed around the derivation of Korn-type inequalities for slender bodies \cite{MR3687880,MR3262603,MR2326996,MR2795715}, which is also of importance for the onset of buckling. Another interesting set of $\Gamma$-convergence results is concerned with curved reference configurations \cite{tobasco2021curvature}.

\medskip

Second, there is the investigation of the qualitative properties of low-energy states in the variational formulation of elasticity. An early instance of this approach in nonlinear elasticity is due to Ball and James \cite{ball1987fine}. A popular way of carrying out such an analysis is by  the derivation of energy scaling laws (and is, of course, the approach followed in the present article), such as as in the work by Kohn and M\"uller \cite{MR1293775,MR1272383}. Since then there have been many successful applications to  phenomena in materials science. In the case of thin elastic sheets, there have been notable results for sheets wrinkling under tension \cite{MR3179665}, under compression \cite{MR1921161,MR3745154}, or when attached to a substrate \cite{kohn2013analysis,MR3646084}. Energy scaling laws for nearly conical deformations of thin elastic sheets have been considered in \cite{2012arXiv1208.4298M,MR3102597}, building on which there have been derivations of  small-deflection limit models in the sense of $\Gamma$-convergence \cite{olbermann2014one,MR3766980}.

\subsection{Notation}

For $A\subseteq\R^n$, we will denote the convex envelope of $A$ as $\conv A$, and the interior of $A$ by $A^\circ$. The characteristic function of $A$ is denoted by $\mathds{1}_A$.  For a finite number of points $a_1,\dots,a_k\in\R^n$, we will write $[a_1a_2\dots a_k]:=\conv \{a_1,\dots,a_k\}$. For $x=(x_1,x_2)\in\R^2$ we write $x^\bot:=(-x_2,x_1)$. 

The $k$-dimensional Hausdorff measure is denoted by $\H^k$, the $n$-dimensional Lebesgue measure by $\L^n$. The set of signed Radon measures on $\Omega\subset\R^n$ is denoted by $\mathcal M(\Omega)$. The subset of non-negative elements of $\mathcal M(\Omega)$ is denoted by $\mathcal M_+(\Omega)$. For every $\mu\in \mathcal M(\Omega)$ there exists a decomposition $\mu=\mu_+-\mu_-$ with $\mu_{\pm}\in\mathcal M_+(\Omega)$. We write $|\mu|:=\mu_++\mu_-$. For a Radon measure $\mu$ and a $|\mu|$-integrable  function $f$, the signed Radon measure $\mu\ecke f$ is defined by $\mu\ecke f(A):=\int_A f\d \mu$ for $A\subseteq\R^n$ $\mu$-measurable. For sequences $\mu_k\in \mathcal M(\Omega)$ and $\mu\in \mathcal M(\Omega)$ such that
  \begin{equation}\label{eq:33}
\lim_{k\to\infty}\int_\Omega\varphi\d\mu_k\to \int_\Omega\varphi\d\mu\quad\text{ for all } \varphi\in C^0_c(\Omega)\,,
\end{equation}
we say that $\mu_k$ converges weakly-* to $\mu$ and write $\mu_k\wsto \mu$. We note that when $\mu_k,\mu\in \mathcal M_+(\Omega)$, then \eqref{eq:33} is equivalent to 
\[
  \begin{split}
\mu(U)&\leq \liminf_{k\to\infty}\mu_k(U) \quad \text{ for all open sets }U\subseteq\Omega\\
\text{ and } \mu(K)&\geq \limsup_{k\to\infty}\mu_k(K) \quad \text{ for all compact sets }K\subseteq\Omega\,.
\end{split}
\]
For  matrices $A,B\in \R^{n\times n}$, we define the dot product $A:B=\mathrm{Tr}(A^TB)$, where $\mathrm{Tr}$ denotes the trace. The matrix  $\cof A\in \R^{n\times n}$ denotes the matrix of cofactors of $A$, $(\cof A)_{ij}=(-1)^{i+j}\det \hat A_{ij}$, where $\hat A_{ij}$ is the $(n-1)\times(n-1)$ matrix obtained from $A$ by deleting the $i$-th row and the $j$-th column. 

The symbol ``$C$'' will be used as follows: An inequality such as $f\leq Cg$ is always to be understood as  the statement ``there exists a constant $C>0$ such that $f\leq Cg$''; sometimes we will put the full statement for clarity, in other places we just write the inequality in an attempt to increase readability. When we write $C(a,b,\dots)$, then $C$ may depend on $a,b,\dots$. Dependence of $C$ may be omitted from the notation when it is clear from the context.  Instead of $f\leq Cg$, we also write $f\lesssim g$. In chains of inequalities, the value of $C$ may change at each time that the symbol is used. 

\subsection{Plan of the paper} In Section \ref{sec:MAanalysis}, we derive some results in convex analysis; in particular we prove the generalized monotonicity property of the Monge-Amp\`ere measure and we  study the piecewise affine-conical structure of the solution $v_0$ of \eqref{eq: MAD}. Using this structure we may then construct the upper bound in Theorem \ref{thm:main} in Section \ref{sec:ub-convex}. In Section \ref{sec:ub-non-convex} we provide the upper bound construction for the situation when the convexity constraint for the out-of-plane component is discarded, achieving a lower energy. Section \ref{sec:lb} contains the proof of the lower bound. It starts off with a rough overview of the proof, followed up by the details in Subsections \ref{sec:suboptimal}-\ref{sec:lower_bound}. In the appendix, we show that when no boundary conditions are imposed on $v$, an energy that scales with $h^2\log\frac{1}{h}$ can be achieved (much lower than what we found in Theorem \ref{thm:main}), with out-of-plane diplacements that do not display any ridges.

\subsection*{Acknowledgments}
The authors would like to thank Ian Tobasco for pointing out that equality holds in the situation of Lemma \ref{lem:W-22L2equality} (instead of just an inequality). This has led  to an improvement of the upper bound construction.

\section{Some convex analysis}

\label{sec:MAanalysis}
Let $\Omega\subseteq\R^n$ be open, convex and bounded. For a convex function $u:\overline\Omega\to\R$,  
the notions of subgradient $\partial^-u$ and Monge-Amp\`ere measure that we have introduced above for the special case $n=2$ have their obvious analogue in the general case. We recall that $\mu_u\in\M(\Omega)$ is a Radon measure, and the following elementary fact concerning subgradients, that we will repeatedly use in our proofs below without further comment: 
\[
 p\in \partial^-u(x) \quad\Leftrightarrow \quad p\cdot x-u(x)\geq p\cdot y-u(y) \quad \text{ for  all } y\in\overline\Omega\,.
  \]

% Whenever we write 
% \[
% u=g \quad\text{ on }\partial\Omega\,,
% \]
% we assume that $u$ extends continuously to $\overline\Omega$ and that the continuous extension agrees with $g$ on $\partial\Omega$.

\medskip

As is well known, any convex function $u:\overline{\Omega}\to\R$ can be written as the supremum of affine functions that are smaller than $u$,
\[
u(x)=\sup\{a(x):a\text{ affine },\,a\leq u\}\,.
\]
For a possibly non-convex function $u:\overline\Omega\to\R\cup \{+\infty\}$ we define the convex envelope as 
\[
u^c (x):=\sup\{v(x): v\text{ convex }, \,v\leq u\}\,.
\]
Combining with the previous identity, we have  
\[
u^c (x)=\sup\{v(x): a\text{ affine }, \,a\leq u\}\,.
\]
For $E\subseteq A$ and $u:A\to\R$, we define the restriction $R_Eu:A\to \R\cup\{+\infty\}$,
\[
R_E u (x):=\begin{cases} u(x)& \text{ if }x\in E\\
+\infty&\text{ else. }\end{cases}
\]
We define the lifting of a convex function on a bounded set:
\begin{definition}
  Let $K\subseteq \R^n$ be compact, $\Omega:=(\conv K)^\circ$, and $u\in C^0(\overline\Omega)$.
  We define the lifting of $u$ on $K$ as $L_K u\in C^0(\overline{\Omega})$,
  \[
  L_K u := (R_K u)^c\,.% \sup\{ w(x)\,:\,w\in C(\conv K)\text{ convex},w\leq u\text{ on }K\}.  
  \]
Here it is understood that the supremum in the definition of $(R_K u)^c$ is taken over convex functions $v:\overline\Omega\to\R$ that satisfy $v\leq R_K u$. 
\end{definition}

We note that $L_K u$ is  a well-defined convex function $\overline{\Omega}\to \R$. If $u$ itself is convex, then   $L_K u \geq u$ in $\conv K$, and $L_K u = u$ on $K$. As a consequence of the above identities, 
    \begin{equation}\label{eq:5}
      L_K u(x) = \sup\{ a(x)\,: a\text{ affine },\,a\leq u\text{ on }K\}\,.
\end{equation}

The lifting $L_Ku$ has the property that its Monge-Amp\`ere measure is concentrated on the set $K$, as we will show in the upcoming lemma. 

\begin{lemma}\label{lem: lifting}
Let $K\subseteq\R^n$ be compact, $\Omega=(\conv K)^\circ$, $u\in C^0(\overline{\Omega})$ convex,  $w=L_K u$, and  $p\in \partial^- w(\Omega)$. Then  all extreme points of ${(\partial^-w)^{-1}(p)}$ are contained in  $K$. 
      % In particular, $x$ is contained in a simplex $A\subseteq A_p$ whose vertices lie in $K$ such that $w$ is affine on $A$.
In addition, we have
   \[
\mu_{w}(\Omega \setminus K) = 0\,.
\]
\end{lemma}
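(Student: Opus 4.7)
My strategy combines the Carathéodory representation of the convex envelope with the classical fact that the set of slopes $p$ at which a convex function has a non-trivial subdifferential preimage is Lebesgue-negligible.

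For the extreme-point statement, fix $p \in \partial^- w(\Omega)$ and pick any $x_0 \in S_p := (\partial^- w)^{-1}(p)$. The affine function $a_p(y) := p\cdot(y-x_0) + w(x_0)$ satisfies $a_p \leq w$ on $\Omega$, and the contact set equals $S_p = \{y \in \Omega : w(y) = a_p(y)\}$; in particular $S_p$ is convex. Since $a_p \leq w = u$ on $K$, the representation \eqref{eq:5} gives $a_p \leq u$ on $K$. I would then invoke the attained Carathéodory representation of the convex envelope,
\[
w(x) = \min\Bigl\{\sum_{i=1}^{n+1}\lambda_i u(y_i) \,:\, y_i \in K,\ \lambda_i \geq 0,\ \sum\lambda_i = 1,\ \sum\lambda_i y_i = x\Bigr\},
\]
attainment following from compactness of $K$ and continuity of $u$ on the compact admissible set. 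Now let $x_0$ be extreme in $S_p$ and let $(\lambda_i, y_i)$ attain the minimum at $x = x_0$. Averaging $a_p(y_i) \leq u(y_i)$ with weights $\lambda_i$ gives
\[
a_p(x_0) = \sum_i \lambda_i a_p(y_i) \leq \sum_i \lambda_i u(y_i) = w(x_0) = a_p(x_0),
\]
so $a_p(y_i) = u(y_i) = w(y_i)$ for every $i$ with $\lambda_i > 0$, i.e.\ each such $y_i$ lies in $K \cap S_p$. Extremality of $x_0$ then forces $x_0 = y_i$ for some $i$, and therefore $x_0 \in K$.

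For the Monge-Ampère statement, let $p \in \partial^- w(\Omega \setminus K)$, so some $x_0 \in S_p$ lies outside $K$. By the first part $x_0$ cannot be extreme in $S_p$, hence the convex set $S_p$ contains a non-degenerate segment through $x_0$ and in particular more than one point. Therefore
\[
\partial^- w(\Omega \setminus K) \subseteq \{p \in \R^n : S_p \text{ is not a singleton}\}.
\]
Identifying $S_p$ with $\partial^- w^*(p)$ via the Legendre transform $w^*$ of $w$, and noting that $w^*$ is locally Lipschitz and convex, Rademacher's theorem implies $w^*$ is differentiable almost everywhere, so the right-hand side has $\L^n$-measure zero. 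By the definition $\mu_w(\Omega \setminus K) = \L^n(\partial^- w(\Omega \setminus K))$, this yields $\mu_w(\Omega \setminus K) = 0$.

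The main obstacle I anticipate is the attained Carathéodory representation of $w = (R_K u)^c$ for $u$ only continuous on $K$; it follows from compactness of the admissible set in $[0,1]^{n+1}\times K^{n+1}$ together with continuity of $(\lambda, y)\mapsto \sum \lambda_i u(y_i)$, but since it is the fulcrum of the first part I would establish it carefully at the outset. Everything else is routine convex-analytic bookkeeping.
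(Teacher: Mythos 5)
Your proof is correct. The second part (the Monge--Amp\`ere measure statement) is essentially identical to the paper's: both pass through the Legendre transform $w^*$, identify non-singleton preimages with non-differentiability points of $w^*$, and invoke Rademacher's theorem.

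Your first part, however, takes a genuinely different route. The paper argues by contradiction: given an extreme point $y\notin K$ of the contact set, it produces a closed half-space $H$ with $\{y\}=H\cap(\partial^-w)^{-1}(p)$, establishes a quantitative gap $w-a\geq\delta$ on a neighborhood of $K\cap H$, and then builds an explicit affine perturbation $b(x')=\e+\e'\nu\cdot(x'-y)$ with $b(y)>0$ and $a+b\leq w$ on $K$, contradicting the supremum formula \eqref{eq:5}. Your argument instead writes $w(x_0)$ via the attained Carath\'eodory representation of the convex envelope over $n+1$ points of $K$, averages the supporting inequality $a_p\leq u$ on $K$, and then uses extremality of $x_0$ to force $x_0$ to coincide with one of the Carath\'eodory points; this is more structural and avoids the $\delta$-neighborhood and $\e,\e'$ calibration. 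The trade-off is that your approach requires establishing that the infimum in the Carath\'eodory formula is attained and that $n+1$ points suffice; both follow from compactness of $K$, continuity of $u$, and the fact that $(x,w(x))$ lies on the boundary of the compact set $\conv(\mathrm{gr}(u|_K))\subset\R^{n+1}$ (so the boundary refinement of Carath\'eodory applies). You flagged this as the anticipated obstacle, and it is indeed the one point that deserves to be spelled out; once in place the rest is as routine as you describe. A further small point worth making explicit is that you use $w=u$ on $K$ (which holds because $u$ is convex) in two places: to deduce $a_p\leq u$ on $K$ and to conclude $y_i\in S_p$ from $a_p(y_i)=u(y_i)$.
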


\begin{proof}
Assume there are $x\in \Omega$, $p\in \partial^- w(x)$ and an extreme point $y$ of ${(\partial^-w)^{-1}(p)}$ with $y\not\in K$.  Then $a(z) = u(x) + p\cdot (z-x)$ defines the affine function satisfying  $(\partial^-w)^{-1}(p)=\{z:a(z)=w(z)\}$.
Since $y$ is an extreme point of the convex set $(\partial^-w)^{-1}(p)$, there exists a closed half-space $H$ such that 
\begin{equation}\label{eq:35}
  \{y\}=H\cap (\partial^-w)^{-1}(p)\,.
\end{equation}
We claim that there exists $\delta>0$ and an open neighborhood $U$ of $K\cap H$ such that 
\[
w-a\geq \delta\quad \text{ on } U\,.
\]
Indeed, suppose this were not the case. Then we may set 
\[
\delta_k=k^{-1}\,,\quad U_k=\{x':\dist(x',K\cap H)<k^{-1}\}
\]
for $k\in\N$
and obtain $x_k\in U_k$ such that $(w-a)(x_k)< k^{-1}$. By compactness of $\overline{U_1}$, we obtain a convergent subsequence $x_{k_l}\to\bar x$ with $\bar x\in K\cap H$ and $(w-a)(\bar x)=0$. This is a contradiction to \eqref{eq:35}, and hence proves the existence of $U,\delta$ as claimed.

\medskip

Now we will construct an affine function $b$ with $b(y)>0$ and $a+b\leq w$ on $K$. Let $\nu$ be the normal to $\partial H$ pointing in the direction of $H$. Then with $\e,\e'>0$ to be chosen later, we may set
\[
b(x')=\e+\e' \nu\cdot(x'-y)\,.
\]
Clearly, $b(y)>0$. Also, for $x'\in K\cap U$, $(a+b)(x')\leq w$ holds for $\e,\e'$ chosen suitably small by continuity. Since $K\setminus U$ has positive distance from $\partial H$, we may achieve $(a+b)\leq w$ on $K\setminus U$ by choosing $\frac{\e'}{\e}$ small enough. 

\medskip

Recalling $w=u$ on $K$, we obtain that 
\[
a+b\leq u \text{ on } K\,.
\]
Using \eqref{eq:5}, it follows $w(y)\geq a(y)+ b(y) > a(y) = w(y)$, a contradiction. This shows  $y\in K$.

\medskip

To see that $\mu_{w}(\Omega\setminus K) = 0$, we define the set 
\[
N := \{p\in \R^n\,:\,\#(\partial^- w)^{-1}(p)>1\}\,.\]
Denoting by $w^*$ the Legendre transform of $w$, we have that $(\partial^- w)^{-1}=\partial^- w^*$ (see \cite[Theorem 23.5]{MR0274683}),  and hence $N= \{p\in \R^n\,:\,\#\partial^- w^*(p)>1\}$. Since $w^*$ is locally Lipschitz, and $\#\partial^- w^*(p)>1$ implies non-differentiability of $w^*$ in $p$, we have that $\L^n(N)=0$ by Rademacher's Theorem. By what we have already shown, if $x\in (\partial^-w)^{-1}(p)\setminus K$, then $x$ is not an extreme point of ${(\partial^-w)^{-1}(p)}$, which of course implies $\#(\partial^-w)^{-1}(p)>1$, i.e., $p\in N$. Thus we have $\partial^- w (\Omega\setminus K) \subseteq N$ which in turn yields $\mu_{w}(\Omega\setminus K) = \L^n(\partial^- w (\Omega \setminus K))\leq \L^n(N) = 0$.
\end{proof}

The following lemma describes an approximation of convex functions from above, which we will use repeatedly  in our proofs below. 

\begin{lemma}
\label{lem:convex_smoothing}
  Let  $u\in W^{1,\infty}(\R^n)$ be a convex function, $\e>0$, and  
  \begin{equation}\label{eq:23}
    P_\e u(x):=\inf\left\{q(x): q\in C^2(\Omega), \nabla^2 q\leq \e^{-1}\id, q\geq u \right\}\quad\text{ for }x\in\R^n\,,
  \end{equation}
where the inequality $\nabla^2q\leq \e^{-1}\id$ is meant in the sense of positive definite matrices. 
Then $P_\e u$ is convex,  with $\|\nabla^2 P_\e u\|_{L^\infty(\R^n)} \leq \frac{1}{\e}$, and writing  $c=4\|\nabla u\|_{L^\infty(\R^n)}$ we have that $u=P_\e u$ on the set
\[
\left\{x\in \R^n: u\in W^{2,\infty}(B(x,c\e)), \|\nabla^2 u\|_{L^\infty(B(x,c\e))}\leq \frac{1}{\e}\right\}\,.
\]
\end{lemma}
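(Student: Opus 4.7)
The lemma has three separate claims (convexity, Hessian bound, and equality on the regular set), and I would prove them in sequence with different tools. The overall picture is that $P_\e u$ is an infimum envelope whose ``extremal'' admissible competitors are paraboloids of the form $\frac{1}{2\e}|y - z|^2 + c$, so each regularity property inherits from the corresponding property of these paraboloids and the operations (shifting, convex combinations, infima) used to build further admissible competitors.

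For convexity, the natural approach is a shift-then-convex-combine argument. Given $x = (1-\lambda)x_0 + \lambda x_1$ and near-optimal admissible $q_0, q_1$ for $x_0, x_1$, I form
\[
q(y) := (1-\lambda)\, q_0(y + x_0 - x) + \lambda\, q_1(y + x_1 - x).
\]
Then $q \in C^2$ and $\nabla^2 q \leq \e^{-1}\id$ by linearity; the essential point is that $q \geq u$ still holds, because the \emph{convexity of $u$} translates the two shifted pointwise bounds $q_i(y + x_i - x) \geq u(y + x_i - x)$ back to $u(y)$ at the convex combination. Since $q(x) = (1-\lambda)q_0(x_0) + \lambda q_1(x_1)$, passing to the infimum over $q_0, q_1$ yields convexity of $P_\e u$.

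For the Hessian bound, I use that the defining condition $\nabla^2 q \leq \e^{-1}\id$ is equivalent to $q - \frac{1}{2\e}|\cdot|^2$ being concave. Since the pointwise infimum of concave functions is concave, $P_\e u - \frac{1}{2\e}|\cdot|^2$ is concave, i.e., $\nabla^2 P_\e u \leq \e^{-1}\id$ in the distributional sense. Combined with the convexity from the previous step, $\nabla P_\e u$ is Lipschitz with constant $\e^{-1}$, which gives the $L^\infty$ bound on $\nabla^2 P_\e u$.

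The equality statement is where the constant $c = 4\|\nabla u\|_{L^\infty}$ appears, and it is the main subtle point of the proof. At an $x$ satisfying the regularity hypothesis, I will construct an explicit admissible competitor achieving the value $u(x)$, namely the tangent paraboloid
\[
q(y) := u(x) + \nabla u(x)\cdot(y-x) + \frac{1}{2\e}|y-x|^2
\]
(the gradient $\nabla u(x)$ is well defined since $u \in W^{2,\infty}(B(x, c\e))$ forces $C^{1,1}$ regularity there). Inside $B(x, c\e)$, the difference $q - u$ has non-negative Hessian and vanishes to first order at $x$, so $q \geq u$ there by a Taylor argument. Outside $B(x, c\e)$, I combine the affine lower bound $u(y) \geq u(x) + \nabla u(x)\cdot(y-x)$ (from global convexity) with the Lipschitz upper bound $u(y) \leq u(x) + \|\nabla u\|_{L^\infty}|y-x|$; the resulting linear estimate $u(y) - u(x) - \nabla u(x)\cdot(y-x) \leq 2\|\nabla u\|_{L^\infty}|y-x|$ is dominated by the quadratic $\frac{1}{2\e}|y-x|^2$ precisely once $|y-x| \geq 4\e\|\nabla u\|_{L^\infty} = c\e$. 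This crossover between the quadratic control inside $B(x, c\e)$ and the linear-vs-quadratic control outside is where the specific value of $c$ is forced, and it is the only place in the proof where one has to carefully balance constants.
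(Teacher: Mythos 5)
Your proof is correct, and the first two parts take a genuinely different (and cleaner) route than the paper. For convexity, the paper first reformulates the infimum as a minimum over paraboloids with $\nabla^2 Q = \e^{-1}\id$ (via a compactness argument), then, given minimizers $Q_0, Q_1$ at $x_0, x_1$, forms the convex envelope $\tilde Q := (\min(Q_0, Q_1))^c$, invokes regularity results for convex envelopes to assert $\tilde Q \in W^{2,\infty}$ with $\nabla^2\tilde Q \leq \e^{-1}\id$, and finally sits a fresh paraboloid above $\tilde Q$ at the midpoint. Your translation-and-convex-combination competitor $q(y) = (1-\lambda)q_0(y+x_0-x) + \lambda q_1(y+x_1-x)$ bypasses all of this: it is manifestly $C^2$ with $\nabla^2 q \leq \e^{-1}\id$, and $q \geq u$ follows directly from Jensen applied to $u$ at the identity $(1-\lambda)(y+x_0-x) + \lambda(y+x_1-x) = y$. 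Similarly, for the Hessian bound the paper uses the existence of the minimizing paraboloid pointwise, whereas your observation that $P_\e u - \frac{1}{2\e}|\cdot|^2$ is an infimum of concave functions and hence concave is a one-liner. Your argument thus avoids both the reformulation step and the appeal to convex-envelope regularity theory, which is a genuine simplification. The third part (equality on the regular set via the tangent paraboloid $q(y) = u(x) + \nabla u(x)\cdot(y-x) + \frac{1}{2\e}|y-x|^2$, split into the cases $|y-x|\leq c\e$ and $|y-x|>c\e$) is essentially identical to the paper's.

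One small remark for completeness: in the inside-the-ball case of part three, when you say $q-u$ has non-negative Hessian, you are implicitly combining $\|\nabla^2 u\|_{L^\infty(B(x,c\e))} \leq \e^{-1}$ with the global convexity of $u$ to get $0\leq \nabla^2 u\leq \e^{-1}\id$ a.e.\ on the ball; this is exactly what the paper does as well, so no issue, but it is worth making the use of convexity explicit there.
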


\begin{proof}

First we note that the right hand side in \eqref{eq:23} can be slightly changed, replacing the inequality $\nabla^2 q\leq \e^{-1}\id$ by an equality, and that the infimum is achieved:
  \begin{equation}\label{eq:28}
    P_\e u(x)=\min\left\{q(x): q\in C^2(\R^n), \nabla^2 q= \e^{-1}\id, q\geq u \right\}\,.
  \end{equation}
Here the existence of the minimum for every $x$ follows from a straightforward compactness argument. 

\medskip 

Now we show that $P_\e u$ is convex. Let $x_0,x_1\in \R^n$ and $x = (1-\lambda)x_0 + \lambda x_1$ for some $\lambda\in[0,1]$. For $i=0,1$ let $Q_i$ be quadratic functions with $\nabla^2 Q_i = \frac{\id}{\e}$, $Q_i\geq u$ on $\Omega$, and $Q_i(x_i)= u(x_i)$.

Define 
\[
\tilde Q:=(\min(Q_0,Q_1))^c\,.
\]
Since $u$ is convex, we have $\tilde Q\geq u$. We note that $\tilde Q\in W^{2,\infty}(\R^n)$ with $\nabla^2 \tilde Q \leq \frac{\id}{\e}$ almost everywhere. This may either be deduced by computing  $\tilde Q$ explicitly, or from the general regularity results of convex envelopes from \cite{MR1868942,griewank1990smoothness}. % {\color{red}(Really? What about the case of $\Omega=[0,1]^2$, $u|_{\partial\Omega}=0$. I believe this yields  $\tilde Q< u$, if one considers $Q_0,Q_1$ to be defined on $\R^n$ (and takes the convex envelope accordingly), or $\tilde Q\not\in W^{2,\infty}$, if one considers them to be defined only on $\Omega$.)}

% By \cite{MR1868942}, the convex envelope of any $W^{2,\infty}$ function is again $W^{2,\infty}$. Since $\tilde Q< q$ in a neighborhood of any $z\in\Omega$ where  $q=\min(Q_0,Q_1)$ is not $C^2$, we may modify $q$ in these neighborhoods to obtain $\tilde q\in W^{2,\infty}$ such that $\breve q=\breve{\tilde q}$. Hence 

  Define the quadratic function $Q(y):= \tilde Q(x) + \nabla \tilde Q(x)\cdot(y-x) + \frac{|y-x|^2}{2\e}$. Then
 $Q \geq \tilde Q \geq u$ on $\Omega$ and
 \[
 P_\e u(x) \leq Q(x) = \tilde Q(x) \leq (1-\lambda)Q_0(x_0) + \lambda Q_1(x_1)\leq (1-\lambda) P_\e u(x_0)+ \lambda P_\e u(x_1) \,.
\]
% Since $\delta>0$ was arbitrary, 
This proves the convexity of  $P_\e u$.

To see that $\nabla^2 P_\e u \leq \frac{\id}{\e}$, we use   that for every $x\in \R^n$, there is a quadratic function $Q$ with $\nabla^2 Q = \frac{\id}{\e}$, $Q\geq P_\e u$ and $Q(x) = P_\e u (x)$ as we have noted in \eqref{eq:28}. Since $P_\e u$ is also convex, this implies that $P_\e u$ is everywhere differentiable and that $\nabla P_\e u$ is Lipschitz, with $0\leq \nabla^2 P_\e u \leq \frac{\id}{\e}$ almost everywhere.

Finally, assume that $x\in\R^n$ with $\|\nabla^2 u\|_{L^\infty(B(x,4\|\nabla u\|_\infty\e))}\leq \frac{1}{\e}$. Let $Q$ be chosen as above.  By the assumed bound on $\|\nabla^2u\|_{L^\infty}$, we have $u\leq Q$ in $\Omega \cap  B(x,4\|\nabla u\|_\infty\e)$. For $y\in \Omega \setminus  B(x,4\|\nabla u\|_\infty \e)$, we have by Lipschitz continuity $u(y)\leq u(x) + \|\nabla u\|_\infty|y-x|$, whereas
\[
  \begin{split}
Q(y) &\geq  u(x) - \|\nabla u\|_\infty|y-x| + \frac{|y-x|^2}{2\e}\\
& \geq u(x) - \|\nabla u\|_\infty|y-x| + \frac{|y-x|4\|\nabla u\|_\infty \e}{2\e}\\
& \geq u(y).
\end{split}
\] 
Hence  $Q\geq u$ on $\Omega$ and $u(x) = Q(x)$, which implies  $P_\e u (x) = u(x)$, completing the proof.
\end{proof}
% %\color{black}

\subsection{Monotonicity properties for convex functions}
\label{sec:comparison_principle}

We now state several invariances of Monge-Amp\`ere measures:
\begin{lemma}\label{lem:invariance}
Let $\Omega \subseteq \R^n$ be convex, open, and bounded. Let $u,v\in C^0(\R^n)$ be convex functions with $u=v$ in $\R^n\setminus \Omega$. Then
\begin{itemize}
  \item [(i)] $\partial^- u(\overline \Omega) = \partial^- v(\overline \Omega)$. In particular, $\mu_u(\overline\Omega) = \mu_v(\overline\Omega)$.
  \item [(ii)] $\int_{\overline\Omega}x\d \mu_u = \int_{\overline\Omega}x\d \mu_v$.
\end{itemize}
\end{lemma}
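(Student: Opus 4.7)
I would prove (i) by a direct supporting-hyperplane argument and (ii) by reducing to the smooth case via mollification together with two applications of Piola's identity. A preliminary observation, used in both parts, is that the subdifferential of a convex function is determined locally, so $\partial^- u = \partial^- v$ on the open set $\R^n\setminus\overline\Omega$; in particular $\mu_u$ and $\mu_v$ agree as Radon measures there.

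For (i), by symmetry it suffices to show $\partial^-u(\overline\Omega)\subseteq\partial^-v(\overline\Omega)$. Given $p\in\partial^-u(x_0)$ with $x_0\in\overline\Omega$, set $f_u(y):=u(y)-p\cdot y$ and $f_v(y):=v(y)-p\cdot y$, so that $f_u$ attains its global minimum $M=f_u(x_0)$ on $\R^n$. Since $f_v$ is continuous on the compact set $\overline\Omega$, it attains a value $\mu:=\min_{\overline\Omega}f_v$ at some $x_1\in\overline\Omega$. The key claim is $\mu=\inf_{\R^n}f_v$; if not, there is $y^*\in\R^n\setminus\overline\Omega$ with $f_v(y^*)<\mu$, and since $u=v$ on $\R^n\setminus\Omega$ one has $f_u(y^*)=f_v(y^*)<\mu$. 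The segment $[x_0,y^*]$ crosses $\partial\Omega$ at some $x_2$; using convexity of $f_u$ together with the obvious inequality $M\le\min_{\partial\Omega} f_u\le\mu$ (the second because $\partial\Omega\subseteq\overline\Omega$), one deduces $f_u(x_2)<\mu$, and hence $f_v(x_2)=f_u(x_2)<\mu$ contradicts the definition of $\mu$. Thus $p\in\partial^-v(x_1)$ for $x_1\in\overline\Omega$. The ``In particular'' then follows from $\mu_u(\overline\Omega)=\mathcal L^n(\partial^-u(\overline\Omega))=\mathcal L^n(\partial^-v(\overline\Omega))=\mu_v(\overline\Omega)$.

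For (ii), I would mollify: let $u_\e:=u*\eta_\e$, $v_\e:=v*\eta_\e$ for a standard smooth bump, obtaining smooth convex functions that agree on $\R^n\setminus\Omega_\e$ where $\Omega_\e:=\{x:\dist(x,\Omega)\le\e\}$. Fix a bounded smooth domain $D$ with $\overline\Omega\subset\!\subset D$ and take $\e$ so small that $\overline{\Omega_\e}\subset\!\subset D$; then $u_\e\equiv v_\e$ along with all derivatives in a neighborhood of $\partial D$. Writing
\[
\det\nabla^2 u_\e-\det\nabla^2 v_\e=\int_0^1\cof(\nabla^2 w_t):\nabla^2(u_\e-v_\e)\,\d t,\qquad w_t:=(1-t)v_\e+tu_\e,
\]
and integrating by parts twice using the Piola identity $\partial_i(\cof\nabla^2 w_t)_{ij}=0$, every boundary term carries a factor of $u_\e-v_\e$ or $\partial_j(u_\e-v_\e)$ on $\partial D$ and therefore vanishes. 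This gives
\[
\int_D x_k\,\d\mu_{u_\e}=\int_D x_k\,\d\mu_{v_\e}.
\]
Letting $\e\to 0$, the classical weak-$*$ continuity of the Monge-Amp\`ere operator under local uniform convergence yields $\mu_{u_\e}\wsto\mu_u$ and $\mu_{v_\e}\wsto\mu_v$. Testing against $\chi x_k$ for a cutoff $\chi\in C^0_c(\R^n)$ with $\chi\equiv 1$ on $\overline D$, and using that $\mu_{u_\e}-\mu_{v_\e}$ is supported in $\overline{\Omega_\e}\subseteq D$ (and $\mu_u-\mu_v$ in $\overline\Omega\subseteq D$), I pass to the limit to conclude $\int_{\overline\Omega}x_k\,\d\mu_u=\int_{\overline\Omega}x_k\,\d\mu_v$.

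The main obstacle is the limit passage, which is delicate because $x_k$ is not compactly supported and the measures may charge all of $\R^n$ with infinite total mass. The cutoff trick succeeds exactly because the difference $\mu_u-\mu_v$ is compactly supported in $\overline\Omega$, so testing against $\chi x_k$ agrees with testing against $x_k$ on that support; verifying this support property (and the analogous one for $\mu_{u_\e}-\mu_{v_\e}$) ultimately relies on the local character of the subdifferential used at the very start, and is thus the same ingredient that underlies (i).
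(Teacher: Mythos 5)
Your overall strategy matches the paper's: a supporting-plane argument for (i), and for (ii) the smooth case via the Piola identity followed by a convex smoothing and a limit passage. The differences are in the details, and where you diverge your route is actually cleaner. For the smooth case you use the fundamental theorem of calculus with $w_t=(1-t)v_\e+tu_\e$, which is tidier than the paper's computation of only $\frac{\d}{\d t}\big|_{t=0}$ (the paper's version is correct but leaves the reader to notice that the derivative vanishes for all $t$, not just $t=0$). You use plain mollification where the paper uses the sup-convolution-type operator $P_\e$ of Lemma~\ref{lem:convex_smoothing}; both preserve convexity, both give agreement outside a shrinking neighbourhood of $\Omega$, and both converge locally uniformly, so either works. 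Most significantly, for the limit passage you observe that $\mu_u-\mu_v$ and $\mu_{u_\e}-\mu_{v_\e}$ are supported in $\overline\Omega$ and $\overline{\Omega_\e}$ respectively (from the local character of the subdifferential) and then test against a single fixed $C^0_c$ function $\chi x_k$ with $\chi\equiv 1$ on $\overline D$; this replaces the paper's more laboured chain of inequalities with an exhaustion $\Omega_\delta\supset\supset\overline\Omega$ and the portmanteau characterisation of weak-$*$ convergence. Your version is more elementary and, in my view, preferable.

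There is one genuine error in (i), though it is local and does not break the argument. You assert the inequality $M\le\min_{\partial\Omega}f_u\le\mu$ and justify the second bound by $\partial\Omega\subseteq\overline\Omega$. But $\partial\Omega\subseteq\overline\Omega$ gives $\min_{\partial\Omega}f_u=\min_{\partial\Omega}f_v\ge\min_{\overline\Omega}f_v=\mu$, i.e.\ the opposite direction. Fortunately the assertion is not what you need: the deduction $f_u(x_2)<\mu$ comes from convexity of $f_u$ on the segment $[x_0,y^*]$ together with $f_u(x_0)=M\le f_u(y^*)<\mu$, which gives $f_u(x_2)\le f_u(y^*)<\mu$ without any reference to $\min_{\partial\Omega}f_u$. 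The correct fact $\min_{\partial\Omega}f_u\ge\mu$, i.e.\ $f_u(x_2)=f_v(x_2)\ge\mu$, is then precisely what produces the contradiction. So the stated inequality should simply be dropped and the roles of the two facts disentangled; once that is done, (i) is correct and is essentially the argument in the paper.
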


\begin{proof}
To prove (i), let  $p\in \partial^-v(\overline\Omega)$. Then the concave function $x\mapsto p\cdot x-v(x)$  achieves its maximum on $\overline\Omega$, and hence  the maximum of that function on $\R^n\setminus\Omega$ is achieved on $\partial\Omega$, 
\[
\max_{x\in \R^n\setminus \Omega} p\cdot x - v(x) = \max_{x\in \partial\Omega} p\cdot x - v(x).  
\]
Since $u=v$ on $\R^n\setminus \Omega$, this identity reads 
\[
\max_{x\in \R^n\setminus \Omega} p\cdot x - u(x) = \max_{x\in \partial\Omega} p\cdot x - u(x)\,,
\]
which implies
  $p\in \partial^-u(\overline\Omega)$. Reversing the roles of $u,v$ yields the claim.

  To prove (ii), assume first that $u, v\in W^{2,\infty}(\R^n)$. % We will show that $\langle\mu_u,b\mathds{1}_\Omega\rangle=\langle\mu_u,b\mathds{1}_\Omega\rangle$, which is sufficient to prove our claim.  
Let $i\in \{1,\dots,n\}$. Writing $w=v-u$, we have
  \[
    \begin{split}
      \left.\frac{\d}{\d t}\right|_{t=0}  \int_\Omega x_i\d \mu_{u+t(v-u)} 
      &=\left.\frac{\d}{\d t}\right|_{t=0}  \int_\Omega x_i\det \nabla^2 (u+tw) \d x\\ 
      &= \int_\Omega x_i\cof \nabla^2 u(x) : \nabla^2 w(x) \,\d x\,.
    \end{split}
  \]
We will show that the right hand side vanishes, which is sufficient to prove our claim. 
By the distributional Piola identity $\div\cof \nabla^2u=0$, we get that 
\[
\div\left(x_i\,\cof \nabla^2 u\nabla w\right), \,\div\left(w \,\cof \nabla^2 u \,e_i\, \right)\in L^2(\Omega)\,,
\]
and hence 
\[
x_i\,\cof\nabla^2u\nabla w,\,\, w\, \cof \nabla^2 u e_i\,\in H(\div;\Omega):=\{f\in L^2(\Omega;\R^n):\div f\in L^2(\Omega)\}\,.
\]
It is well known that $C^\infty(\overline\Omega)$ is dense in $H(\div;\Omega)$ and that a continuous normal trace operator $H(\div;\Omega)\to H^{1/2}(\partial \Omega)$ exists (see e.g.~\cite[Chapter 20]{tartar2007introduction}). In particular, one may integrate by parts as follows, 
  \[
  \begin{split}
   \int_\Omega &x_i\cof \nabla^2 u(x) : \nabla^2 w(x) \,\d x\\
    &=  -\int_\Omega  \cof \nabla^2 u(x) : \nabla w(x)\otimes e_i\,\d x + \int_{\partial \Omega} x_i\cof \nabla^2 u(x): \underbrace{\nabla w(x)}_{=0}\otimes n(x) b\,\d\H^{n-1}\\
    &=  -\int_{\partial \Omega} \underbrace{w(x)}_{=0}\cof \nabla^2 u(x):e_i\otimes n(x)\,\d\H^{n-1}\\
&=0\,.
  \end{split}  
  \]
  Here we have  used the fact that $w=\nabla w=0$ on $\partial \Omega$. This completes the proof for the case $u,v\in W^{2,\infty}_{\mathrm{loc.}}(\R^n)$. We note in passing that for this case, the statement and proof hold true also for non-convex $u,v$.

 % We note that for x  

%   use the change-of-variables formula to write
% \[
%   \langle \mu_v, x\mathds{1}_{\overline\Omega} \rangle = \int_{\nabla v(\overline\Omega)} (\nabla v)^{-1}(p)\,dp = \int_{\partial(\nabla v(\overline\Omega))} v^*(p)n(p)\,d\H^{n-1}(p),
% \]
% where $v^*(p) = \sup_{x\in \R^n} p\cdot x - v(x)$. Here we used the fact that $\nabla v(\overline\Omega)$ is a Lipschitz image of a Lipschitz-bounded set and thus a Lipschitz-bounded set. For $p\in \partial(\nabla v(\overline\Omega))$ we have $v^*(p) = u^*(p)$, showing (ii).

If $u,v$ are only continuous, we approximate both $u$ and $v$ using Lemma \ref{lem:convex_smoothing}, yielding $u_\eps,v_\eps\in W^{2,\infty}_{\mathrm{loc.}}(\R^n)$ with $u_\eps = v_\eps$ outside of $B(\Omega,\rho(\eps))$, with $\rho(\eps) \to 0$ as $\e\to 0$, and $u_\e\to u$, $v_\e\to v$ locally uniformly. From this convergence follows the weak-* convergence of  the Monge-Amp\`ere measures $\mu_{u_\e},\mu_{v_\e}$  to $\mu_u,\mu_v$ respectively (see e.g.~\cite[Proposition 2.6]{figalli2017monge}). Let us fix  $i\in \{1,\dots,n\}$, and  choose $a>0$ such that $\phi(x):=a+x_i\geq 0$ on $\overline\Omega$. Multiplying the convergent sequences by $\phi$ we get
\[
\mu_{v_\e}\ecke \phi\wsto \mu_v\ecke \phi,\quad \mu_{u_\e}\ecke \phi\wsto \mu_u\ecke \phi\,.
\]
We choose a non-increasing sequence of open convex  bounded sets $\Omega_\delta$ with $\Omega\cc\Omega_\delta$ and  $\cap_{\delta>0}\Omega_\delta=\overline\Omega$. Now using the approximation  properties of Radon measures (see e.g.~\cite[Chapter 1]{evans1992measure}), we obtain the following chain of inequalities, 
\[
  \begin{split}
    \mu_{u}\ecke\phi(\overline\Omega)&=\inf_{\delta>0}\mu_{u}\ecke\phi(\Omega_\delta)\\
&\leq\inf_{\delta>0}\liminf_{\e\to 0 }\mu_{u_\e}\ecke \phi(\Omega_\delta)\\
&=\inf_{\delta>0}\liminf_{\e\to 0 }\mu_{v_\e}\ecke \phi(\Omega_\delta)\\
&\leq \inf_{\delta>0} \limsup_{\e\to 0 }\mu_{v_\e}\ecke \phi(\overline{\Omega_\delta})\\
&\leq \inf_{\delta>0} \mu_{v}\ecke \phi(\overline{\Omega_\delta})\\
&= \mu_{v}\ecke \phi(\overline{\Omega})\,.
  \end{split}
\]
Reversing the roles of $u$ and $v$ yields the equality $\mu_{u}\ecke\phi(\overline\Omega)=\mu_{v}\ecke \phi(\overline{\Omega})$, and hence, after subtracting $a \mu_u(\overline\Omega)=a\mu_v(\overline\Omega)$ from this equality, we obtain our claim.
\end{proof}

\begin{remark}
The equality 
\[
\left.\frac{\d}{\d t}\right|_{t=0}\int_\Omega x_i\det\nabla^2(u+tw)\d x=0 \quad\text{ for all }w\in W^{2,\infty}_0(\Omega)\,,
\]
which we have proved above, means of course that the first moment of the Monge-Amp\`ere measure $\int x \det\nabla^2u\d x$ is a \emph{null Lagrangian}.
  To the best of the authors' knowledge, this fact has gone unnoticed in the literature so far.
\end{remark}

As a consequence of the previous lemma, we obtain the following local monotonicity property:

\begin{proposition}\label{prop: comparison}
  Let $\Omega \subseteq \R^n$ be convex, open, and bounded, $u,v\in W^{1,\infty}( \Omega)$  convex with $u=v$ on $\partial \Omega$, $u \geq v$ in $\Omega$,
      and $\phi\in C^0(\overline \Omega)$  concave and non-negative. Then
    \begin{equation}\label{eq: concave comparison}
      \int_\Omega\phi\d \mu_u \leq \int_\Omega\phi\d \mu_v\,.
    \end{equation}
\end{proposition}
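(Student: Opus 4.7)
The plan is to establish the inequality first under additional regularity assumptions by a direct interpolation argument, and then to remove the regularity by approximation.

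\textbf{Smooth case.} Assume $u,v\in W^{2,\infty}(\Omega)$ are convex with $u=v$ on $\partial\Omega$ and $u\geq v$ in $\Omega$, and that $\phi\in C^2(\overline\Omega)$ is concave and non-negative. The interpolated path $w_t:=(1-t)v+tu$, $t\in[0,1]$, consists of convex functions with $w_t=u=v$ on $\partial\Omega$ and $w_t\geq v$ in $\Omega$; set $F(t):=\int_\Omega \phi\det\nabla^2 w_t\,\d x$, so that $F(0)=\int \phi\,\d\mu_v$ and $F(1)=\int \phi\,\d\mu_u$ and it suffices to show $F'\leq 0$. Differentiating,
\[
F'(t)=\int_\Omega \phi\,\cof(\nabla^2 w_t):\nabla^2(u-v)\,\d x.
\]
I would then use the distributional Piola identity $\partial_j(\cof\nabla^2 w_t)_{ij}=0$ (already exploited in Lemma \ref{lem:invariance}(ii)) to perform two integrations by parts, transferring both derivatives from $\nabla^2(u-v)$ onto $\phi$. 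The boundary term of the first IBP features $\nabla(u-v)$, which on $\partial\Omega$ equals $(\nabla(u-v)\cdot n)\,n$ since $u-v$ vanishes there; the boundary term of the second IBP vanishes outright since $u-v=0$ on $\partial\Omega$. The outcome is
\[
F'(t)=\int_\Omega(u-v)\,\cof(\nabla^2 w_t):\nabla^2\phi\,\d x+\int_{\partial\Omega}\phi\,(\nabla(u-v)\cdot n)\,(n^T\cof(\nabla^2 w_t)n)\,\d\H^{n-1}.
\]
Both summands are $\leq 0$: the interior integrand is pointwise $\leq 0$ because $u-v\geq 0$, and $\cof\nabla^2 w_t$ is positive semi-definite (cofactor of a PSD matrix) while $\nabla^2\phi$ is negative semi-definite (concavity of $\phi$), so the trace of their product is non-positive; the boundary integrand is $\leq 0$ because $\phi\geq 0$, $n^T\cof(\nabla^2 w_t)n\geq 0$, and $\nabla(u-v)\cdot n\leq 0$ at $\partial\Omega$, where $u-v\geq 0$ attains its minimum $0$.

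\textbf{Removing the regularity.} I would approximate $\phi$ uniformly on $\overline\Omega$ by $C^2$ concave non-negative functions (standard mollification preserves concavity, and a small downward constant shift followed by truncation at $0$ keeps non-negativity) and approximate $u,v$ by $W^{2,\infty}$ convex functions using the operator $P_\e$ from Lemma \ref{lem:convex_smoothing}. A key monotonicity is that $u\geq v$ implies $P_\e u\geq P_\e v$: every admissible quadratic majorant $q$ of $u$ (i.e.\ $q\geq u$, $\nabla^2 q\leq \e^{-1}\id$) automatically satisfies $q\geq v$, hence is admissible for $v$, so the infimum defining $P_\e v$ is over a larger set. With locally uniform convergence $P_\e u\to u$, $P_\e v\to v$ and the resulting weak-$*$ convergence of the Monge--Amp\`ere measures (\cite[Proposition 2.6]{figalli2017monge}), the smooth-case inequality passes to the limit.

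\textbf{Main obstacle.} The delicate technical point is preserving the boundary equality $u_\e=v_\e$ on $\partial\Omega$ through the approximation, since $u=v$ holds only on $\partial\Omega$ and not in a neighborhood thereof, so a bare application of $P_\e$ will in general break it. A way around this is to first extend $u$ and $v$ to all of $\R^n$ by a common Lipschitz convex function outside $\Omega$, chosen so that both extensions remain convex (which is feasible because the common boundary trace $u|_{\partial\Omega}=v|_{\partial\Omega}$ can be convexly extended and $u\geq v$ in $\Omega$ is compatible with such an extension); then the smoothings $P_\e u$ and $P_\e v$ agree outside a slight enlargement of $\Omega$, and Lemma \ref{lem:invariance} ensures the corresponding boundary contributions cancel. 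Alternatively, one may work on slightly shrunken subdomains $\Omega_\delta\Subset\Omega$, exploit that $u-v=O(\delta)$ on $\partial\Omega_\delta$ by Lipschitz continuity and the vanishing of $u-v$ on $\partial\Omega$, and pass to the limit $\delta\to 0$ while tracking vanishing error terms.
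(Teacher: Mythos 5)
Your smooth-case argument is correct and takes a genuinely different route from the paper. The paper's proof of Proposition \ref{prop: comparison} is combinatorial: it lifts $u,v$ to piecewise-linear functions $L_K u, L_K v$ over a finite vertex set $K$, perturbs one vertex at a time, tracks how subgradient mass is redistributed from that vertex to $K\setminus\{a_1\}$, and compares against an affine majorant of $\phi$, invoking Lemma \ref{lem:invariance}(ii) (the null-Lagrangian property of the first moment) to close the argument; it then passes to general $u,v$ by letting the vertex set become dense. Your approach instead interpolates along the straight line $w_t=(1-t)v+tu$, differentiates $F(t)=\int\phi\det\nabla^2 w_t$, and integrates by parts twice using the Piola identity. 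The resulting sign decomposition --- interior term controlled by $\cof\nabla^2 w_t : \nabla^2\phi \leq 0$ (PSD against NSD) and $u-v\geq 0$, boundary term controlled by $\phi\geq 0$, $n^T\cof(\nabla^2 w_t)n\geq 0$ and the outward normal derivative of the non-negative function $u-v$ being $\leq 0$ at its boundary zero set --- is clean and correct. It is essentially the same null-Lagrangian mechanism that the paper exploits in Lemma \ref{lem:invariance}(ii), but you push it further to obtain the sign of $F'$ directly, which the paper does not do. The upshot is a shorter, more PDE-flavored argument that also makes the role of concavity of $\phi$ structurally transparent.

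The remaining gap is exactly where you flag it, in the approximation step, and neither of your two workarounds is complete as stated. Option (b) (shrunken domains $\Omega_\delta$) does not work without further modification: on $\partial\Omega_\delta$ one still has $u\neq v$, so the first boundary term $\int_{\partial\Omega_\delta}\phi\,(\cof\nabla^2 w_t\,\nabla(u-v))\cdot n$ reappears and does \emph{not} vanish as $\delta\to 0$ --- $\nabla(u-v)\cdot n$ is $O(1)$ and has no sign, while $\cof\nabla^2 w_t$ is only bounded by $\e^{-(n-1)}$ after $P_\e$-smoothing. For option (a) (common Lipschitz extension, then $P_\e$), the extension is right and $P_\e u\geq P_\e v$ is right, but two further issues need care. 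First, $\phi$ must be extended concavely and non-negatively to the enlarged domain $\Omega'$; if $\phi$ vanishes on part of $\partial\Omega$ this fails, and you need the auxiliary step of working with $\phi+\eta$ and letting $\eta\to 0$ at the end (using $\mu_u(\Omega)\leq\mu_v(\Omega)$, i.e.\ the classical monotonicity, to control the $\eta$-term). Second, the estimate you obtain after passing $\e\to 0$ is on $\overline\Omega$ (or a neighborhood), namely $\int_{\overline\Omega}\phi\,\d\mu_U\leq\int_{\overline\Omega}\phi\,\d\mu_V$ for the extensions, and the boundary $\partial\Omega$ may carry positive Monge-Amp\`ere mass that differs between $U$ and $V$. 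To get back to the stated inequality over open $\Omega$ you must show $\mu_U\geq\mu_V$ on $\partial\Omega$; this is true and follows from the inclusion $\partial^-V(z)\subseteq\partial^-U(z)$ for $z\in\partial\Omega$ (which holds because $U\geq V$ everywhere with equality at $z$ and outside $\Omega$), but it is an additional lemma that your sketch does not supply. With those two ingredients in place, your proof goes through.
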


\begin{proof}
\emph{Step 1.}
First we suppose that for an arbitrary set of points $\{a_1,\dots,a_M\}\subseteq \Omega$, $u,v$ are of the form $u=L_Ku,v=L_Kv$, where $K=\partial \Omega \cup \{a_1,\ldots,a_M\}$, and $u(a_j)=v(a_j)$ for $j=2,\ldots,M$.
We extend $u$ and $v$ to convex functions $U,V\in W^{1,\infty}(\R^n)$ respectively by
\[
U(x) := \min_{y\in\overline\Omega} u(y) + L|x-y|, \quad V(x) := \min_{y\in\overline\Omega} v(y) +L|x-y|,  
\]
where $L\geq \max(\|\nabla u\|_\infty,\|\nabla v\|_\infty)$ is an upper bound for the Lipschitz constants of $u,v$. 

Recalling that $p\in \partial^-u(a_1)$ if and only if the function $x\mapsto p\cdot x-u(x)$ attains its maximum over $K$ in $a_1$, and $v(a_1)\leq u(a_1)$, $v=u$ on $K\setminus\{a_1\}$, we have that
\[
\partial^-U(a_1)=\partial^- u(a_1) \subseteq \partial^- v(a_1)=\partial^-V(a_1)\,.
\]
Furthermore, since  we have $\partial^-U(\overline\Omega) = \partial^-V(\overline \Omega)$ by Lemma \ref{lem:invariance} (i), 
\[
\partial^-U(y) = \partial^-V(y) \cup (\partial^- U (y)\cap \partial^- v(a_1))\text{ for all }y\in K\setminus \{a_1\}.
\]
In other words, only the excess subgradient $\partial^-v(a_1)\setminus \partial^-u(a_1)$ is redistributed to $K\setminus \{a_1\}$, and% . For any non-negative affine function $b:\Omega\to\R$, this implies by Lemma \ref{lem:invariance} (i) 
\[
  \begin{split}
0&\leq \mu_V(\{a_1\})-\mu_U(\{a_1\})\\
0&\geq \mu_V-\mu_U\quad\text{ on } K\setminus\{a_1\}\,.
\end{split}
\]
 In particular, the latter implies $\mu_V\leq\mu_U$ on $\partial\Omega$. Now let $b:\Omega\to\R$ be an affine function with $b(a_1)=\phi(a_1)$ and $b\geq \phi$. Then
\begin{align*}
\int_\Omega\phi\d (\mu_v- \mu_u) &=
\int_\Omega \phi \d(\mu_V- \mu_U) \\
 &\geq  \int_{\overline\Omega} \phi \d(\mu_V- \mu_U) \\
&=(\mu_V-\mu_U)\ecke \phi(\{a_1\})+(\mu_V-\mu_U)\ecke \phi(K\setminus\{a_1\})\\
&\geq(\mu_V-\mu_U)\ecke b(\{a_1\})+(\mu_V-\mu_U)\ecke b(K\setminus\{a_1\})\\
&=  \int_{\overline\Omega}b\,\d( \mu_V - \mu_U)\\
& = 0,
\end{align*}
where we used Lemma \ref{lem:invariance} (ii) in the last equality.

\emph{Step 2.} Now let us suppose  $u,v$ are of the form $u=L_Ku,v=L_Kv$, with $K=\partial \Omega \cup \{a_1,\ldots,a_M\}$. The inequality \eqref{eq: concave comparison} follows by applying Step 1 $M$ times.

\emph{Step 3.} Now let  $u,v\in W^{1,\infty}(\overline\Omega)$ be as in the statement of the current proposition. Let $\{a_k\,:\,k\in\N\}\subseteq \Omega$ be a dense subset of $\Omega$. For $k\in \N$, set $u_k = L_{\partial \Omega \cup \{a_1,\ldots,a_k\}}u$, $v_k = L_{\partial \Omega \cup \{a_1,\ldots,a_k\}}v$. % We claim that 
% \begin{equation}\label{eq: limit tested}
% \langle \mu_u,\phi\mathds{1}_\Omega \rangle = \lim_{k\to\infty}  \langle \mu_{u_k},\phi\mathds{1}_\Omega \rangle.
% \end{equation}

% We first note that since $u_k = u$ on $\partial \Omega$ and $u_k\geq u$, we have $\partial^-u_k(\Omega) \subseteq \partial^- u(\Omega)$ by Lemma \ref{lem:invariance} (i). Also, w
We have that $u_k\to u$, $v_k\to v$ uniformly, which implies 
\begin{equation}\label{eq:32}
\begin{split}
\mu_{v_k}&\wsto \mu_v\,,\quad\mu_{u_k}\wsto \mu_u\,,\\
\mu_{v_k}\ecke \phi&\wsto \mu_v\ecke \phi\,, \quad\mu_{u_k}\ecke \phi\wsto \mu_u\ecke \phi\,.
\end{split}
\end{equation}
By the classical monotonicity of the subdifferential (see \cite[Lemma 2.7]{figalli2017monge}), 
\[
\mu_{v_k}(\Omega)\leq\mu_v(\Omega)\,,\quad\mu_{u_k}(\Omega)\leq\mu_u(\Omega)\quad\text{ for all }k\in\N\,.
\]
By \eqref{eq:32},
\[
\liminf_{k\to\infty}\mu_{v_k}(\Omega)\geq \mu_v(\Omega)\,,\quad
\liminf_{k\to\infty}\mu_{u_k}(\Omega)\geq \mu_u(\Omega)\,.
\]
Thus 
\[
\lim_{k\to\infty}\mu_{v_k}(\Omega)= \mu_v(\Omega)\,,\quad\lim_{k\to\infty}\mu_{u_k}(\Omega)= \mu_u(\Omega)\,,
\]
which implies
\[
\lim_{k\to\infty}\mu_{v_k}\ecke\phi(\Omega)= \mu_v\ecke\phi(\Omega)\,,\quad\lim_{k\to\infty}\mu_{u_k}\ecke\phi(\Omega)= \mu_u\ecke\phi(\Omega)\,,
\]
and the claim of the proposition follows from the previous step.
\end{proof}

In order to substantiate our claim from the introduction that Proposition \ref{prop: comparison} is of independent interest,  we note in passing that as a direct byproduct, we obtain the following  generalization of the well-known Alexandrov-Bakel'man-Pucci maximum principle (see e.g.~\cite{caffarelli1995fully} for the classical statement).

\begin{corollary}
\label{cor:ABPgen}
  Let $\Omega\subseteq\R^n$ be convex, open and bounded, let $u\in C^2(\Omega)\cap C^0(\overline\Omega)$ satisfy the linear differential inequality
\[Lu=\sum_{i=1}^na^{ij}\partial_{x_i}\partial_{x_j}u\leq g \text{ in }\Omega\]
where $a^{ij}:\Omega\to\R$, $i,j=1,\dots,n$ are coefficients such that $A=(a^{ij})_{i,j=1,\dots,n}>0$ in the sense of positive definite matrices. Let $g\in L^n(\Omega)$, and suppose that $\phi\in C^0(\overline \Omega)$ is a non-negative concave function. Then 
\[
 u(x)\geq \min_{\partial\Omega}u-\frac{\diam(\Omega)}{n(\omega_n\phi(x))^{1/n}}\left\|\left(\frac{\phi}{\det A}\right)^{1/n}g\right\|_{L^n(\Gamma^-)}\quad \text{ for }x\in\Omega\,,
\]
where $\Gamma^-$ is the set where $u$ agrees with its convex envelope.
\end{corollary}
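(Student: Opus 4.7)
The plan is to combine the generalized monotonicity of Proposition~\ref{prop: comparison} with a classical cone comparison at the point $x$. After translating $u$ by a constant I may assume $m:=\min_{\partial\Omega}u=0$, and I fix $x\in\Omega$. The bound is trivial if $u(x)\geq 0$, so assume $u(x)<0$. I set $\tilde u:=\min(u,0)$ and let $v$ be the convex envelope of $\tilde u$ on $\overline\Omega$. Using at each $y_0\in\partial\Omega$ affine supports of the form $\ell(y)=M\nu\cdot(y-y_0)$, where $\nu$ is the outward normal of a supporting hyperplane of $\Omega$ at $y_0$ and $M$ is chosen large enough that $\ell\leq\tilde u$ on $\overline\Omega$, one checks that $v\equiv 0$ on $\partial\Omega$. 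Set $d:=-v(x)\geq -u(x)>0$; bounding $d$ from above will suffice.

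Next I introduce the cone $C(y):=d\bigl(\|y-x\|_{\Omega-x}-1\bigr)$, where $\|\cdot\|_{\Omega-x}$ denotes the Minkowski functional of $\Omega-x$. Then $C$ is convex, $C(x)=-d$, and $C\equiv 0$ on $\partial\Omega$. Since $v$ is convex with $v(x)=-d$ and $v=0$ on $\partial\Omega$, convexity along each segment from $x$ to $\partial\Omega$ yields $v\leq C$ in $\overline\Omega$. Proposition~\ref{prop: comparison} applied to the pair $C\geq v$ and to the concave non-negative weight $\phi$ then gives
\[
\int_\Omega\phi\,\ud\mu_C\;\leq\;\int_\Omega\phi\,\ud\mu_v.
\]
The left-hand side is explicit: $\mu_C$ is a Dirac mass at $x$ of weight $\L^n(\partial^-C(x))=d^n\L^n\bigl((\overline\Omega-x)^\ast\bigr)$, and the elementary inclusion $B\bigl(0,1/\diam(\Omega)\bigr)\subseteq(\overline\Omega-x)^\ast$ gives $\mu_C(\{x\})\geq\omega_n d^n/\diam(\Omega)^n$. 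So the left-hand side is at least $\phi(x)\,\omega_n d^n/\diam(\Omega)^n$.

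For the right-hand side I invoke the standard Alexandrov--Caffarelli observation that $\mu_v$ is supported on the contact set $\{v=\tilde u\}$, and its restriction to $\{u\geq 0\}$ is trivial because $v\equiv 0$ there. On the remaining part, which is contained in the $\Gamma^-$ of the statement, $u$ is $C^2$, $v$ touches $u$ from below, so $\nabla v=\nabla u$ and $0\leq\nabla^2 v\leq\nabla^2 u$ Lebesgue-a.e. Since the determinant is monotone on positive matrices and the arithmetic--geometric inequality applied to $A$ and $\nabla^2 u$ gives
\[
\det A\,\det\nabla^2 u\;\leq\;\bigl(\tfrac{1}{n}\Tr(A\nabla^2 u)\bigr)^n=\bigl(\tfrac{Lu}{n}\bigr)^n\leq\bigl(\tfrac{g}{n}\bigr)^n,
\]
one obtains $\det\nabla^2 v\leq g^n/(n^n\det A)$ on the contact set. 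Integrating against $\phi$, chaining with the lower bound on $\int\phi\,\ud\mu_C$, and solving for $d$ yields the claimed inequality.

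The main technical step, which I expect to require the most care, is the boundary identification $v|_{\partial\Omega}=0$: the convex envelope of $\tilde u$ over $\overline\Omega$ does not automatically attain these boundary values, and the supporting-hyperplane construction must genuinely produce an affine function that lies below $\tilde u$ on \emph{all} of $\overline\Omega$, not merely near $y_0$. Once that is secured, the other ingredients---the standard identification of $\mu_v$ with $\det\nabla^2 u$ on the contact set, and the cone computation---go through routinely.
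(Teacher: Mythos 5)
Your argument is correct and follows the same ABP-via-comparison route as the paper's proof: lower-bound the Monge--Amp\`ere mass of a cone-like competitor touching at $x$, transfer this via Proposition~\ref{prop: comparison} with the non-negative concave weight $\phi$ to the Monge--Amp\`ere measure of a convex envelope, and bound the latter pointwise by $g^n/(n^n\det A)$ using the arithmetic--geometric inequality. The only differences are cosmetic. The paper uses the lifting $\hat u := L_{\{x\}\cup\partial\Omega}u$ as the cone-like upper function and compares it with the convex envelope $u^c$ of $u$ itself, whereas you build an explicit Minkowski cone $C$ and compare with the envelope of the truncated $\min(u,0)$; the paper also reads off $\mu_{\hat u}(\{x\})\geq \omega_n\big(|u(x)-\min_{\partial\Omega}u|/\diam\Omega\big)^n$ directly by inserting the ball $B\big(0,(\min_{\partial\Omega}u-u(x))/\diam\Omega\big)$ into $\partial^-\hat u(x)$, which is the same estimate as your $\mu_C(\{x\})\geq\omega_n d^n/\diam(\Omega)^n$. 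The boundary identification you single out as delicate is a genuine subtlety (a single affine support $\ell(y)=M\nu\cdot(y-y_0)$ need not lie below $\tilde u$ for any finite $M$; in general one needs a sequence $\ell_n$ with $\ell_n(y_0)\uparrow 0$). Worth noting: the paper's proof, as written, implicitly faces the analogous boundary-matching question of whether $\hat u=u^c$ on $\partial\Omega$ when invoking Proposition~\ref{prop: comparison}, and both versions also tacitly assume enough Lipschitz regularity of the convex envelopes for that proposition to apply as stated. Your formulation is closer to the classical Alexandrov--Bakelman--Pucci setup; the paper's is streamlined by reusing the $L_K$ lifting machinery from Section~\ref{sec:MAanalysis}.
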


\begin{proof}
Let $u^c$ denote the convex envelope of $u$, and ${\hat u}:=L_{\{x\}\cup\partial\Omega}u$. We note
  \begin{equation}\label{eq:19}
    \omega_n \left(\frac{|u(x)-\min_{\partial \Omega} u|}{\diam(\Omega)}\right)^n\leq \mu_{{\hat u}}(\{x\})\,.
  \end{equation}
By Proposition \ref{prop: comparison}, we have
  \begin{equation}\label{eq:20}
    \phi(x)\mu_{{\hat u}}(\{x\})=\int \phi \d\mu_{{\hat u}}\leq  \int \phi\d\mu_{ u^c}=\int_{\Gamma^-}\phi\det \nabla^2 u\d x\,.
  \end{equation}
Furthermore,  
  \begin{equation}\label{eq:21}
    \det A\det \nabla^2u \leq \left(\frac{\mathrm{Tr} A\nabla^2 u}{n}   \right)^n\,\,.
  \end{equation}
Combining the  inequalities \eqref{eq:19}-\eqref{eq:21} and reordering yields the claim.
\end{proof}

The following lemma is a slightly generalized  version of the Alexandrov maximum principle \cite[Theorem 2.8]{figalli2017monge}:

\begin{lemma}\label{lemma: Alexandrov}
Let $\Omega \subseteq \R^n$ be open, convex, bounded, $U\subseteq\Omega$ open and  $u\in C^0(\overline \Omega)$ convex. Then for every $x\in U$
\begin{equation}\label{eq: Lu - u}
\frac{L_{\partial U}u(x) - u(x)}{\diam(U)} \leq C(n) \mu_u(U)^{1/n}.
\end{equation}
\end{lemma}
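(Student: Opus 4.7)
Write $M := L_{\partial U}u(x) - u(x) \geq 0$; the inequality is trivial if $M=0$, so assume $M>0$. The strategy adapts the classical Alexandrov maximum principle (e.g.\ \cite[Thm.~2.8]{figalli2017monge}): I will show that the image $\partial^- u(U) \subseteq \R^n$ of the subdifferential (computed on the ambient $\overline \Omega$, as in the paper) contains an open ball of radius $M/\diam(U)$, from which $\mu_u(U) = \L^n(\partial^- u(U)) \geq \omega_n\bigl(M/\diam(U)\bigr)^n$ and hence the claim with $C(n) = \omega_n^{-1/n}$.

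First, I pick an affine supporting hyperplane $a$ of the convex function $L_{\partial U}u$ at $x$, so $a(x) = L_{\partial U}u(x)$ and $a \leq L_{\partial U}u$ on the domain of $L_{\partial U}u$. Since $L_{\partial U}u = u$ on $\partial U$, this gives $a \leq u$ on $\partial U$ (cf.\ \eqref{eq:5}) and $a(x) - u(x) = M$. For a fixed $p \in B(\nabla a, M/\diam(U))$, set $G(z) := u(z) - p\cdot z$, a convex continuous function on the compact set $\overline \Omega$, and let $z^\ast \in \overline \Omega$ be a minimizer. Since $p \in \partial^- u(z^\ast)$ by construction, it suffices to show $z^\ast \in U$.

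Suppose, for contradiction, that $z^\ast \notin U$. The segment $[x, z^\ast]$ starts in the open set $U$ and ends outside, so it has a first exit point $w = x + t_0(z^\ast - x) \in \partial U$ with $t_0 \in (0,1]$ and $|w-x|\leq \diam(U)$. The one-dimensional function $\phi(t) := (u-a)(x + t(z^\ast - x))$ is convex on $[0,1]$ with $\phi(0)=-M$ and $\phi(t_0)\geq 0$ (using $u\geq a$ on $\partial U$). A convex function lies above its secant through two points past the second point, so $\phi(1) \geq M(1-t_0)/t_0$. Substituting $a(z^\ast) = u(x) + M + \nabla a\cdot(z^\ast-x)$ and using $z^\ast - x = (w-x)/t_0$, a short computation gives
\[
G(z^\ast) - G(x) \;\geq\; \frac{M - (p - \nabla a)\cdot(w-x)}{t_0} \;\geq\; \frac{M - |p - \nabla a|\,\diam(U)}{t_0} \;>\; 0,
\]
contradicting the minimality of $z^\ast$. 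Hence $z^\ast \in U$, establishing the inclusion $B(\nabla a, M/\diam(U)) \subseteq \partial^- u(U)$.

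The main obstacle lies precisely in the case $t_0 < 1$, i.e.\ $z^\ast \in \overline \Omega \setminus \overline U$, which has no counterpart in the classical proof on $\overline U$ with zero boundary data: slopes in $\partial^- u(U)$ must support $u$ globally on $\overline \Omega$, not only on $\overline U$. The secant argument along $[x, z^\ast]$ is what upgrades the single inequality $u \geq a$ on $\partial U$ into a growth estimate propagating into $\overline \Omega \setminus \overline U$, so that $|w-x| \leq \diam(U)$ (rather than $\diam(\Omega)$) governs the final bound.
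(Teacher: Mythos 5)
Your proof is correct, and it takes a genuinely different route from the paper's. The paper defines the auxiliary lifting $v := L_{\partial U\cup\{x\}}u$, shows by a direct chain of inequalities that the closed ball of radius $(L_{\partial U}u(x)-u(x))/\diam(U)$ around any $p\in\partial^- L_{\partial U}u(x)$ lies in $\partial^- v(x)$, and then passes to $\mu_u(U)$ by invoking the classical monotonicity of the subdifferential (\cite[Lemma 2.7]{figalli2017monge}), noting that $\mu_v$ is concentrated at $\{x\}$ inside $U$. You instead show the ball is contained directly in $\partial^- u(U)$, by taking for each $p$ in the ball a minimizer $z^*$ of $u-p\cdot(\cdot)$ over $\overline\Omega$ and ruling out $z^*\notin U$ via the secant inequality for the convex function $(u-a)$ along $[x,z^*]$ combined with the first exit point $w\in\partial U$. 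Both arguments yield the same ball radius and hence the same constant $C(n)=\omega_n^{-1/n}$. What your approach buys is self-containedness: you do not need the monotonicity lemma, only compactness and elementary convexity along a segment. What the paper's approach buys is a shorter computation, at the price of an external reference; its use of the lifting $L_{\partial U\cup\{x\}}u$ also mirrors the structural role that such liftings play throughout Section~2. One small point worth flagging in your write-up: the inequality $a\leq u$ on $\partial U$ uses that $L_{\partial U}u=u$ on $\partial U$, which in turn relies on the convexity of $u$ (not merely continuity); it is true here but deserves a word, since the equality fails for general $u$.
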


\begin{proof}
We define the auxiliary function $v := L_{\partial U \cup \{x\}} u$. We will show that
\begin{equation}\label{eq: subgradient inclusion}
\L^n(\partial^- v(x)) \geq C(n) \frac{|L_{\partial U} u(x) - u(x)|^n}{\diam(U)^{n}}.
\end{equation}

By the monotonicity of the subdifferential (see \cite[Lemma 2.7]{figalli2017monge}),
\[
  \mu_u(U)  \geq \mu_v(U) = \L^n(\partial^- v(x)) \geq C(n) \frac{|L_{\partial U} u(x) - u(x)|^n}{\diam(U)^{n}}\,,
\]
which after rearranging yields \eqref{eq: Lu - u}.

Now to show \eqref{eq: subgradient inclusion}:

Let $p\in\partial^- L_{\partial U} u (x)$. If $q\in \overline{B\left(p,\frac{L_{\partial U}u(x)}{\diam(U)}\right)}$, then
\begin{align*}
q\cdot x - u(x) = & p\cdot x - L_{\partial U}u(x) + (q-p)\cdot x + L_{\partial U}u(x) - u(x) \\
\geq & \max_{z\in \partial U} p\cdot z - L_{\partial U}u(z) + (q-p)\cdot x + L_{\partial U}u(x) - u(x)\\
\geq & \max_{z\in \partial U} q\cdot z - u(z) - |q-p| |x-z| + L_{\partial U}u(x) - u(x)\\
\geq & \max_{z\in \partial U} q\cdot z - u(z).
\end{align*}

This shows that $\overline{B\left(p,\frac{L_{\partial U}u(x)}{\diam(U)}\right)} \subseteq \partial^- v(x)$.
\end{proof}

As a consequence we get the following identity of the solution of \eqref{eq: MAD}.

\begin{lemma}
\label{lem:v0LKv0eq}
  Let $v_0$ be the solution of \eqref{eq: MAD}, and $K:=\partial\Omega\cup\{a_1,\dots,a_N\}$. Then $v_0=L_Kv_0$.
\end{lemma}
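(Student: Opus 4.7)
The plan is to prove the identity via uniqueness of the solution of \eqref{eq: MAD}: I will show that $w := L_K v_0$ is convex on $\Omega$, vanishes on $\partial\Omega$, and has Monge--Amp\`ere measure $\mu_w = \overline\mu$; since $v_0$ is the unique such function by \cite[Theorem 2.13]{figalli2017monge}, this forces $w = v_0$. The convexity of $w$ and the boundary equality $w = 0$ on $\partial\Omega$ are immediate from the remarks following the definition of $L_K$: since $v_0$ is itself convex we have $L_K v_0 \geq v_0$ on $\conv K$ and $L_K v_0 = v_0$ on $K \supseteq \partial\Omega$.

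The substance of the proof therefore reduces to identifying $\mu_w$. By Lemma~\ref{lem: lifting}, $\mu_w(\Omega \setminus K) = 0$, and since $\partial\Omega \cap \Omega = \emptyset$ the measure $\mu_w$ is concentrated on $\{a_1,\dots,a_N\}$. Write $\mu_w = \sum_{i=1}^N \tau_i \delta_{a_i}$ with $\tau_i \geq 0$. Applying the classical monotonicity of the Monge--Amp\`ere measure (the $\phi \equiv 1$ case of Proposition~\ref{prop: comparison}, or equivalently \cite[Lemma 2.7]{figalli2017monge}) to the pair $w \geq v_0$ with $w = v_0$ on $\partial\Omega$ yields the global upper bound
\[
\sum_{i=1}^N \tau_i \;=\; \mu_w(\Omega) \;\leq\; \mu_{v_0}(\Omega) \;=\; \sum_{i=1}^N \sigma_i.
\]

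For the matching individual lower bound at each atom I would argue by subgradient inclusion: any $p \in \partial^- v_0(a_i)$ satisfies $p \cdot (x - a_i) + v_0(a_i) \leq v_0(x) \leq w(x)$ for all $x \in \overline{\Omega}$, and using $w(a_i) = v_0(a_i)$ this rearranges to $p \in \partial^- w(a_i)$. Hence $\partial^- v_0(a_i) \subseteq \partial^- w(a_i)$ and therefore $\sigma_i = \mathcal{L}^2(\partial^- v_0(a_i)) \leq \mathcal{L}^2(\partial^- w(a_i)) = \tau_i$ for each $i$. Combined with the displayed inequality this forces $\tau_i = \sigma_i$ for every $i$, so $\mu_w = \overline\mu$, and the uniqueness theorem closes the argument.

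The single conceptual point is recognizing that one needs \emph{both} ingredients: the global upper bound $\sum \tau_i \leq \sum \sigma_i$ supplied by monotonicity, and the atom-by-atom lower bound $\tau_i \geq \sigma_i$ supplied by the trivial subgradient inclusion at the touching points $a_i$. Neither one alone suffices, but together they pin down the masses exactly and hand the statement to the Monge--Amp\`ere uniqueness theorem.
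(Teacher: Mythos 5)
Your proposal is correct but follows a genuinely different route from the paper's. The paper proves $v_0 = L_K v_0$ directly by contradiction: assuming $v_0(x) < L_K v_0(x)$ at some $x \in U := \Omega\setminus\{a_1,\dots,a_N\}$ and applying the Alexandrov maximum principle (Lemma~\ref{lemma: Alexandrov}) on $U$, noting that $\partial U = K$ and $\mu_{v_0}(U) = 0$ since $\overline\mu$ is carried by $\{a_1,\dots,a_N\}$; this forces $L_K v_0(x) \leq v_0(x)$, contradicting the strict inequality. Your approach instead treats $w := L_K v_0$ as a competitor solution of \eqref{eq: MAD} and pins down $\mu_w$ by a mass-counting argument: Lemma~\ref{lem: lifting} shows $\mu_w$ is atomic on $\{a_i\}$; the global monotonicity inequality $\mu_w(\Omega) \leq \mu_{v_0}(\Omega)$ bounds the total mass from above, while the elementary subgradient inclusion $\partial^- v_0(a_i) \subseteq \partial^- w(a_i)$ (valid because $v_0 \leq w$ with equality at $a_i$) bounds each atom from below; squeezing gives $\mu_w = \overline\mu$, and uniqueness from \cite[Theorem 2.13]{figalli2017monge} concludes. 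Your route is a bit longer and invokes the uniqueness theorem as a black box, but it is conceptually transparent and avoids the quantitative Alexandrov estimate entirely, relying only on the structural Lemma~\ref{lem: lifting}, classical monotonicity, and a one-line subgradient observation; the paper's version is shorter at the price of one additional quantitative tool.
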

\begin{proof}
  The inequality $v_0\leq L_K v_0$ follows directly from the definition of $L_Kv_0$. Assume $v_0(x)<L_Kv(x)$ for some $x\in \Omega\setminus\{a_1,\dots,a_N\}$. By Lemma \ref{lemma: Alexandrov} with $U=\Omega\setminus\{a_1,\dots,a_N\}$, we obtain $\mu_{L_Kv_0}(U)>0$, and hence a contradiction. 
\end{proof}

\subsection{Analysis of the singular Monge-Amp\`ere equation \eqref{eq: MAD}}

We will now  treat the special case  $\Omega\subseteq \R^2$  (still supposing it to be open, convex and bounded). Let $\overline \mu = \sum_{i=1}^N \sigma_i \delta_{a_i}\in \M_+(\Omega)$, with $\sigma_i>0$, $\{a_1,\dots,a_N\}\subseteq \Omega$ and $N\geq 1$. We   characterize 
 the solution $v_0$ to the Monge-Amp\`ere equation with right hand side $\overline\mu$, see \eqref{eq: MAD}.  In particular, Lemma \ref{lemma: 2d ridge} below  will show that   $\nabla v_0\in BV(\Omega;\R^2)$, and that its jump set $J_{\nabla v_0}$ is contained in the set of line segments connecting the  vertices $\{a_1,\dots,a_N\}$. First we state and prove an auxiliary lemma:

 \begin{lemma}
\label{lem:v0aux}
  Let $v_0\in C^0(\overline\Omega)$ be the solution to \eqref{eq: MAD}. Suppose that $p\in \partial^-v_0(\Omega)$, $[a_1a_2]\subseteq (\partial^-v_0)^{-1}(p)$, and 
  \begin{equation}\label{eq:4}
    (\partial^-v_0)^{-1}(p)\cap \{x: (x-a_1)\cdot e_{12}^\bot>0\}=\emptyset\,,
  \end{equation}
where
$e_{12}=\frac{a_2-a_1}{|a_2-a_1|}$. Then there exists $\e_0>0$ such that $p+\e e_{12}^\bot\in \partial^-v_0(x)$ for every $x\in [a_1a_2]$ and every $\e\in(0,\e_0)$.
 \end{lemma}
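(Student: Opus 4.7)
My plan is to exhibit an affine function with slope $p+\e e_{12}^\bot$ whose graph supports that of $v_0$ simultaneously at every point of $[a_1a_2]$; once such an affine function is produced, $p+\e e_{12}^\bot\in\partial^-v_0(x)$ for every $x\in[a_1a_2]$ is immediate from the definition of the subgradient. Set $a_0(y):=v_0(a_1)+p\cdot(y-a_1)$, the affine function corresponding to the subgradient $p$; then $a_0\leq v_0$ on $\Omega$ with equality on $S:=(\partial^-v_0)^{-1}(p)\supseteq[a_1a_2]$. The natural candidate is
\[
\tilde a_\e(y):=a_0(y)+\e\, e_{12}^\bot\cdot(y-a_1).
\]
Because $(x-a_1)\cdot e_{12}^\bot=0$ and $v_0(x)=a_0(x)$ for $x\in[a_1a_2]$, we automatically have $\tilde a_\e(x)=v_0(x)$ on $[a_1a_2]$, so the lemma reduces to showing $\tilde a_\e\leq v_0$ on $\Omega$ for some $\e>0$.

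To establish this inequality I would invoke Lemma \ref{lem:v0LKv0eq}: since $v_0=L_Kv_0$ with $K=\partial\Omega\cup\{a_1,\dots,a_N\}$, and since $v_0$ is the supremum of affine functions bounded above on $K$ by $v_0|_K$, the inequality $\tilde a_\e\leq v_0$ on $\Omega$ is equivalent to the finitely many conditions $\tilde a_\e(a_i)\leq v_0(a_i)$ together with $\tilde a_\e\leq 0$ on $\partial\Omega$. For each disclination write $\tilde a_\e(a_i)-v_0(a_i)=(a_0(a_i)-v_0(a_i))+\e\, e_{12}^\bot\cdot(a_i-a_1)$: the first summand is non-positive, and if $a_i\in S$ the second summand is also non-positive by assumption \eqref{eq:4}, whereas if $a_i\not\in S$ there is a strict negative gap in the first summand and the sum is non-positive for all $\e$ below a positive threshold. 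On the portion of $\partial\Omega$ where $(y-a_1)\cdot e_{12}^\bot\leq 0$ the inequality $\tilde a_\e\leq 0$ is immediate since $a_0\leq 0$ on $\partial\Omega$ and the tilt is non-positive; on $\partial\Omega\cap\{(y-a_1)\cdot e_{12}^\bot>0\}$ the condition reduces to $\e\leq -a_0(y)/(e_{12}^\bot\cdot(y-a_1))$.

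The main obstacle I anticipate is a uniform positive lower bound for the ratio $-a_0(y)/(e_{12}^\bot\cdot(y-a_1))$ on this last piece of $\partial\Omega$. Both numerator and denominator are affine functions of $y$, and both can vanish at points where the line $\partial H_+:=\{(y-a_1)\cdot e_{12}^\bot=0\}$ meets $\partial\Omega$, making the ratio indeterminate there. The key input is that $\partial H_+$ passes through the interior point $a_1$ of the convex domain $\Omega$ and is therefore a chord meeting $\partial\Omega$ transversally; a first-order expansion of both affine functions along $\partial\Omega$ at such a crossing point, together with the observation that $-a_0\geq 0$ on $\partial\Omega$ and vanishes only on $\partial\Omega\cap S$, yields a strictly positive limit of the ratio. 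Taking $\e_0$ to be the minimum of the thresholds from the disclinations and from this boundary estimate completes the argument.
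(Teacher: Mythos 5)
Your construction is exactly the paper's: form the tilted affine function $A_\eps(z)=A(z)+\eps\,e_{12}^\perp\cdot(z-a_1)$ (with $A$ the supporting affine function of slope $p$), and via Lemma \ref{lem:v0LKv0eq} reduce $A_\eps\leq v_0$ on $\Omega$ to the finitely many disclination vertices and to $\partial\Omega$. The vertex estimate is identical. On $\partial\Omega$ the paper argues by showing $L:=(\partial^-v_0)^{-1}(p)\cap\partial\Omega$ lies at positive distance $s>0$ from the line $\{(z-a_1)\cdot e_{12}^\perp=0\}$ and then using uniform positivity of $v_0-A$ away from $L$; you instead bound the ratio $-a_0/((\cdot-a_1)\cdot e_{12}^\perp)$ directly. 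These are cosmetic variants of the same estimate.

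The one genuine gap is precisely the indeterminate-form issue you flag, and your proposed resolution does not close it. If a crossing point $z_0$ of $\{(z-a_1)\cdot e_{12}^\perp=0\}$ with $\partial\Omega$ did lie in $L$, then by the analysis in case (a) of the proof of Lemma \ref{lemma: 2d ridge} the vector $p$ would be a positive multiple of the outer normal to $\partial\Omega$ at $z_0$, so the \emph{tangential} derivative of $-a_0$ along $\partial\Omega$ would vanish there, and your first-order expansion would give ratio $\to 0$ rather than a positive limit. What must be shown is that no crossing point lies in $L$, i.e.\ that the face $(\partial^-v_0)^{-1}(p)\cap\{(z-a_1)\cdot e_{12}^\perp=0\}$ is exactly $[a_1a_2]$ and hence compactly contained in $\Omega$. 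This follows because $a_1,a_2$ are disclination vertices with $\sigma_1,\sigma_2>0$: if the face extended past, say, $a_2$, then $a_2$ would be a non-extreme point of the convex set $(\partial^-v_0)^{-1}(p)$, which forces $\partial^-v_0(a_2)$ to lie in a line and hence $\mu_{v_0}(\{a_2\})=0$, a contradiction. (The paper's bare assertion that such an $s>0$ exists rests on the same unstated observation.) With that sentence added, your argument is complete.
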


 \begin{proof}
Let $x\in [a_1a_2]$.   The supporting function 
\[
A(z)=v_0(x)+p\cdot (z-x)=v_0(a_1)+p\cdot(z-a_1)
\]
satisfies $\{A=v_0\}=(\partial^-v_0)^{-1}(p)$. There exists $r>0$ such that
\[
v_0(a)-A(a)>r \quad \text{ for all } a\in\{a_1,\dots,a_N\}\setminus (\partial^-v_0)^{-1}(p)\,.
\]
Writing $L:=(\partial^-v_0)^{-1}(p)\cap\partial\Omega$, the assumption \eqref{eq:4} implies that
there exists $s>0$ with
\[
e_{12}^\bot\cdot (z-a_1)<-s \quad\text{ for  } z\in L\,.
\] 
Furthermore   for every subset $S$ of $\partial \Omega$ with positive distance from $L$, there exists $c(S)>0$ such that 
\[
v-A\geq c(S)\quad\text{ on } S\,.
\]
Combining the above estimates, there exists $\e_0>0$ depending on $r$ and $s$ such that for $\e<\e_0$,
the affine function
\[
A_\e(z):=A(z)+\e e_{12}^\bot\cdot (z-a_1) 
\]
satisfies  $A_\e=v_0$ on $[a_1a_2]$ and 
\[
A_\e\leq v_0 \quad\text{ on } \{a_1,\dots,a_N\}\cup\partial\Omega\,.
\]
By Lemma \ref{lem:v0LKv0eq}, $v_0=L_Kv_0$ with $K=\{a_1,\dots,a_N\}\cup\partial\Omega$, and hence the above implies that $A_\e$ is a supporting function for $v_0$, and in turn  that $p+\e e_{12}^\bot\in \partial^- v_0(x)$.
 \end{proof}

%We will use the following notation to denote non-ordered pairs of labels: For $(i,j)\in\{1,\dots,N\}^2$, we define the equivalence relation ``$\sim$'' by requiring $(i,j)\sim (j,i)$ for all $i,j\in\{1,\dots,N\}$. The set of non-ordered pairs is $\{(i,j)\in \{1,\dots,N\}^2:i\neq j\}/\sim$. 
We now state the global structure of the solution to \eqref{eq: MAD}:

\begin{lemma}\label{lemma: 2d ridge}

Let $\Omega\subseteq \R^2$ be open, convex, bounded, with $C^{1,1}$ boundary.  Let $v_0\in C^0(\overline\Omega)$ be the solution to \eqref{eq: MAD}. 
Define the singular set 
\[
S_{v_0} := \{x\in \Omega\,:\, v_0\text{ is not differentiable at }x\}\,.
\]
Then
\begin{itemize}
\item [(i)]  There exists $\rid\subseteq  \{(i,j)\in \{1,\dots,N\}^2:i\neq j\}$ such that 
\[
S_{v_0} = \bigcup_{(i,j)\in \rid} [a_ia_j]\,.
\]

\item [(ii)] There exists $C>0$ only depending on $\Omega,\overline\mu$ such that
\[|\nabla^2 v_0(x)|\leq \frac{C}{\dist(x,\{a_1,\ldots,a_N\})}
\]
for almost every $x\in \Omega \setminus S_{v_0}$. In particular, $v_0\in W^{2,\infty}_\loc(\Omega \setminus S_{v_0})$.
\item [(iii)] For each $(i,j)\in\rid$, there exist $b_{ij}^+,b_{ij}^-\in\Omega$ satisfying  the following property:  $[a_ib_{ij}^+a_jb_{ij}^-]$ is a non-degenerate rhombus with diagonal $[a_ia_j]$, and $v_0$ is affine on $[a_ib_{ij}^+a_j]$ and on $[a_ib_{ij}^-a_j]$, with two different values of the gradient $\nabla v_0$ on these sets.

\item [(iv)] If $N\geq 2$, then for every $i\in \{1,...,N\}$ there exists $j\in \{1,\dots,N\}$ such that $(i,j)\in \rid$.
\end{itemize}
\end{lemma}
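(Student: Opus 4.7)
The plan is to derive all four parts from the identity $v_0=L_Kv_0$ with $K=\partial\Omega\cup\{a_1,\ldots,a_N\}$ (Lemma \ref{lem:v0LKv0eq}), combined with Lemma \ref{lem: lifting}, which ensures that for every $p\in\partial^-v_0(\Omega)$ the extreme points of the contact set $C(p):=(\partial^-v_0)^{-1}(p)$ lie in $K$. The bulk of the work is in (i); (ii)--(iv) follow from the resulting structural picture combined with the equation \eqref{eq: MAD}.

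For (i), I fix $x\in S_{v_0}\setminus\{a_1,\ldots,a_N\}$, pick distinct $p_+,p_-\in\partial^-v_0(x)$ with supporting affine functions $A_\pm$, and observe that $C(p_+)\cap C(p_-)\subseteq\{A_+=A_-\}=:\ell$ is a segment $[b_-,b_+]\ni x$ on the line $\ell$. I then claim $b_\pm\in\{a_1,\ldots,a_N\}$. On the one hand, $b_+\notin\Omega\setminus\{a_1,\ldots,a_N\}$: by \eqref{eq: MAD} one would then have $\mu_{v_0}(\{b_+\})=0$, but the termination of the ridge at $b_+$ forces $\partial^-v_0(b_+)$ to have nonempty two-dimensional interior (the one-sided limits of $\nabla v_0$ near $b_+$ from the two sides of the ridge open up a full wedge beyond the jump segment $[p_-,p_+]$), contradicting $\mathcal{L}^2(\partial^-v_0(b_+))=\mu_{v_0}(\{b_+\})=0$. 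On the other hand, $b_+\notin\partial\Omega$: since $A_\pm\le v_0\le 0=v_0(b_+)$, both $A_\pm$ attain their maxima over $\overline\Omega$ at $b_+$, so by $C^1$ regularity of $\partial\Omega$ we get $p_\pm=\lambda_\pm n(b_+)$ with $\lambda_\pm\ge 0$. Then $\ell=\{A_+=A_-\}$ is the tangent line to $\partial\Omega$ at $b_+$, and by convexity of $\Omega$ this line does not enter the open set $\Omega$, contradicting $x\in\ell\cap\Omega$.

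For (iii), the two gradients $p_\pm$ off a ridge $[a_ia_j]\in\rid$ give contact sets with $[a_ia_j]$ as a boundary edge. Walking along $\partial C(p_+)$ away from $a_i$ and from $a_j$ one reaches the next extreme points, which by the same Monge-Amp\`ere/tangent arguments must lie in $\{a_1,\ldots,a_N\}\cap\Omega$; the first common one is $b_{ij}^+$, and $[a_ib_{ij}^+a_j]\subseteq C(p_+)$ is a triangle on which $v_0=A_+$ is affine. The mirror construction on the $p_-$-side yields $b_{ij}^-$, and $p_+\ne p_-$ places $b_{ij}^\pm$ strictly on opposite sides of $[a_ia_j]$, giving the non-degenerate quadrilateral. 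For (ii), off $S_{v_0}\cup\{a_1,\ldots,a_N\}$ one has $\mu_{v_0}=0$ locally, so $\det\nabla^2v_0=0$ almost everywhere; combining this with the affineness on the triangles from (iii) and Alexandrov's estimate (Lemma \ref{lemma: Alexandrov}) applied on the ball $B(x,\dist(x,\{a_1,\ldots,a_N\})/2)$ yields $|\nabla^2v_0(x)|\le C/\dist(x,\{a_1,\ldots,a_N\})$ with $C=C(\Omega,\overline\mu)$. For (iv), when $N\ge 2$ the equation \eqref{eq: MAD} gives $\mu_{v_0}(\{a_i\})=\sigma_i>0$, forcing $\partial^-v_0(a_i)\subset\R^2$ to have nonempty 2D interior; each edge of its boundary corresponds to a ridge emanating from $a_i$, and by (i) each such ridge terminates at another vertex $a_j$, so $(i,j)\in\rid$ for some $j\ne i$.

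The trickiest point is the tangent-line argument in (i) ruling out ridge endpoints on $\partial\Omega$: one must handle the possibility that $\partial\Omega$ contains straight segments, in which case the supporting tangent touches $\partial\Omega$ along a whole segment but still does not enter the interior $\Omega$. The pointwise Hessian bound (ii) is also delicate and relies essentially on the polygonal structure produced in (i) and (iii) combined with a vertex-centered scaling argument.
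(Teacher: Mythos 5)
Your overall strategy of deriving the structure from $v_0=L_Kv_0$ (Lemma \ref{lem:v0LKv0eq}) together with the lifting lemma is the same as the paper's, but the arguments you give for the individual steps contain genuine gaps.

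The main gap is in (i). To rule out a ridge endpoint $b_+\in\Omega\setminus\{a_1,\dots,a_N\}$, you assert that ``termination of the ridge at $b_+$ forces $\partial^-v_0(b_+)$ to have nonempty two-dimensional interior,'' claiming the one-sided gradients ``open up a full wedge.'' This is exactly the fact that needs proof, and it is not obvious: {\em a priori} both contact sets $C(p_+)$, $C(p_-)$ could extend past $b_+$ off the line $\ell$ while $\partial^-v_0(b_+)$ remains the segment $[p_-,p_+]$. The paper avoids this by observing that for any $p$ in the {\em relative interior} of $\partial^-v_0(x)$ the contact set $C(p)$ is exactly a line segment, so its endpoints are extreme points of $C(p)$ and hence, by Lemma \ref{lem: lifting}, lie in $K=\partial\Omega\cup\{a_1,\dots,a_N\}$; the boundary alternative is then excluded by an argument close to your tangent-line one (your tangent-line argument is fine, and is a mild variant of the paper's case (a)). One can in fact show that your $[b_-,b_+]=C(p_+)\cap C(p_-)$ coincides with $C(p)$ for such interior $p$, so the correct justification is available within your framework --- but the ``wedge'' justification as written is not a proof.

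Parts (ii) and (iii) also have gaps. For (ii), applying Lemma \ref{lemma: Alexandrov} on a ball around $x$ with $\mu_{v_0}=0$ only yields $v_0=L_{\partial B}v_0$ on that ball; it gives no quantitative pointwise Hessian bound. The paper instead exploits the explicit formula $\nabla v_0(x)=\tfrac{v_0(a)}{n(z)\cdot(a-z)}n(z)$ from the boundary-touching case and the implicit-function estimate $|\nabla_x z(x,a)|\lesssim |x-a|^{-1}$, which is where the factor $\dist(x,\{a_1,\dots,a_N\})^{-1}$ really comes from; the $C^{1,1}$ hypothesis on $\partial\Omega$ is needed precisely here. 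For (iii), you walk along $\partial C(p_+)$ as if $C(p_+)$ were already known to be a two-dimensional set with $[a_ia_j]$ on its boundary, but this must be established (the paper does so via Lemma \ref{lem:v0aux}, showing $[a_ia_j]\subsetneq C(p^\pm)$); also, the $b^\pm_{ij}$ you reach need not form a rhombus --- they must be shrunk toward the perpendicular bisector of $[a_ia_j]$, as the paper does. Finally, your (iv) argument (``each edge of $\partial\partial^-v_0(a_i)$ corresponds to a ridge; by (i) it terminates at another vertex'') presupposes that $\partial^-v_0(a_i)$ has a straight edge, which is what must be proved; the paper instead runs a covering argument: if no contact set $C(p)$ with $p\in\partial^-v_0(a_i)$ contains another vertex, then $\overline\mu$ would be concentrated at $a_i$ alone.
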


Note that if $\partial \Omega$ is not $C^{1,1}$, e.g. in the case of a square, there may be additional ridges connecting the vertices with $\partial \Omega$, and (ii) will fail.

\begin{remark}
  Whenever $(i,j)\in\rid$, we will say that $[a_ia_j]$ is a \emph{ridge}.
\end{remark}

\begin{proof}
Let $x_0\in \Omega\setminus\{a_1,\dots,a_N\}$ and $p\in \partial^-v_0(x_0)$. 
By Lemma \ref{lem: lifting}, the extreme points of ${(\partial^-v_0)^{-1}(p)}$ are contained in  $\partial \Omega \cup \{a_1,\ldots,a_N\}$. At least one of the following holds: (a) ${(\partial^-v_0)^{-1}(p)}\cap \partial\Omega\neq\emptyset$, or (b) $\#\partial^-v_0(x_0)>1$, or (c) ${(\partial^-v_0)^{-1}(p)}\cap \partial\Omega=\emptyset$ and $\#\partial^-v_0(x_0)=1$.

\begin{itemize}
\item [(a)]
    ${(\partial^-v_0)^{-1}(p)}\cap \partial\Omega\neq\emptyset$.
In this case, since $v_0(z) = 0 \geq v_0(x_0) + p\cdot(z-x_0)$ for all $z\in \partial\Omega$, $p$ must be a positive multiple of the outer normal $n(z_0)$ for any $z_0\in {(\partial^-v_0)^{-1}(p)}\cap \partial \Omega$. 
In addition, there must be at least one vertex $a\in (\partial^-v_0)^{-1}(p)\cap \{a_1,\ldots,a_N\}$, since otherwise $v_0=0$ on $(\partial^-v_0)^{-1}(p)$, which would imply  $v_0=0$ in $\overline\Omega$. This gives us the exact form
\begin{equation}\label{eq: subgradient fraction}
  p = \frac{v_0(a)}{n(z_0)\cdot(a-z_0)}n(z_0)\,.
\end{equation}
% We note that this implies in particular
% \[
% \#\partial^-v_0(x_0)=1\,.
% \]
% We may write
% \[
% \nabla v(x_0)=\frac{u(a)}{n(z_0)\cdot(a-z_0)}n(z_0),
% \]
% and note that this formula holds in the interior of the set
% \[
% \{x_0:(\partial^-v_0)^{-1}(\nabla v(x_0))\cap \partial\Omega\neq\emptyset\,.
% \]
\item[(b)] $\#\partial^-v_0(x_0)>1$.
Since $0=\overline \mu(\{x_0\})=\Lm^2(\partial^- v_0(x_0))$, the subdifferential must be a line segment ${\partial^- v_0(x_0)} = [p^-p^+]$ with $p^+\neq p^-$. If $p\not\in\{p^-,p^+\}$, then
\[
  \begin{split}
    v_0(x_0)& + \tilde p\cdot (x-x_0)\leq\\
& \max(v_0(x_0) + p^-\cdot (x-x_0), v_0(x_0) + p^+\cdot (x-x_0))\quad\text{ for } x\in\Omega\,,
  \end{split}
\]
with equality holding for  $x\in x_0+\R(p^+-p^-)^\perp$. This shows that $(\partial^-v_0)^{-1}(p)$ is a line segment orthogonal to $p^+-p^-$ for such $p$.

By (a), an extreme point of such a  line segment cannot be a boundary point, so that $(\partial^-v_0)^{-1}(p) = [a_i a_j]$ for some $i,j\in \{1,\dots,N\}$. In addition, 
  \begin{equation}\label{eq:22}
    [a_ia_j]\subsetneq (\partial^-v_0)^{-1}(p^\pm)\,,
  \end{equation}
since the equality $[a_ia_j]= (\partial^-v_0)^{-1}(p^\pm)$ would imply the existence of  $\e_0>0$ such that 
\[
p^\pm+t e_{ij}^\bot\in \partial^-v_0(x_0)\quad\text{  for }|t|<\e_0
\]
by Lemma \ref{lem:v0aux}, where $e_{ij}=(a_j-a_i)/|a_j-a_i|$. This  would be a contradiction to the choice of $p^\pm$ as extreme points of $\partial^- v(x_0)$. Now \eqref{eq:22} implies that $\nabla v_0=p^-$, $\nabla v_0=p^+$ respectively on at least two non-degenerate triangles bordering  $[a_ia_j]$, which we may denote by $[a_ib_{ij}^+a_j]$, $[a_ib_{ij}^+a_j]$, with a choice of $b^\pm_{ij}$ that makes $[a_ib_{ij}^+a_jb^-_{ij}]$ a rhombus.

\item[(c)] $(\partial^-v_0)^{-1}(p)\cap \partial\Omega=\emptyset$ and $\#\partial^-v_0(x_0)=1$. 
In this case all extreme points of 
\[
{(\partial^-v_0)^{-1}(p)}={(\partial^-v_0)^{-1}(\nabla v_0(x_0))}
\]
are vertices, and we may write ${(\partial^-v_0)^{-1}(p)}=[a_{i_1}\dots a_{i_M}]$. Hence, if $x_0\in \partial {(\partial^-v_0)^{-1}(p)}$,  there exist $i,j\in\{1,\dots,N\}$ such that $x_0\in [a_ia_j]$ and 
\[
{(\partial^-v_0)^{-1}(p)}\cap \{x:(x-x_0)\cdot e_{ij}^\bot >0\}=0\,,
\]
which  produces the contradiction $\#\partial^-v_0(x_0)>1$ by Lemma \ref{lem:v0aux}.
Hence $x_0$ must be an interior point of $(\partial^-v_0)^{-1}(p)$, i.e. $v_0$ is affine in a neighborhood of $x_0$.
\end{itemize}
\emph{Proof of (i),(ii),(iii):}

In particular we may draw the following from the case distinction (a)-(c) above: If $x\in \Omega\setminus \cup_{(i,j)\in\rid}[a_ia_j]$, then either $v_0$ is affine in a neighborhood of $x$, or by \eqref{eq: subgradient fraction},
\begin{equation}
  \nabla v_0(x) =   p = \frac{v_0(a)}{n(z)\cdot(a-z)}n(z),
\end{equation}
for some vertex $a\in\{a_1,\ldots,a_N\}$ and some boundary point $z\in \partial \Omega$.  If $x\in [a z]$, the boundary point $z=z(x,a)$ is defined through the locally Lipschitz implicit function
\[
  \begin{cases}
    z(x,a) \in \partial \Omega\\
    x\in [a z(x,a)]\,.
  \end{cases}
\]

A direct calculation shows that $|\nabla_{x} z(x,a)|\leq \frac{C(\Omega,a)}{|x-a|}$. By the chain rule, $|\nabla^2 v_0(x)|\leq \frac{C}{|x-a|}$, where $C$ only depends on $\Omega$ and $\overline\mu$. This proves (i) and (ii), letting $\rid$ be the set of pairs $(i,j)$ that appear in  (b). Point (iii) follows immediately from the constructions in (b).

\emph{Proof of (iv):}
Assuming that $N>1$, consider a vertex $a\in\{a_1,\ldots,a_N\}$. We claim that there exists  $p\in \partial^- v_0(a)$ such that ${(\partial^-v_0)^{-1}(p)}$ contains at least one more vertex $a'\neq a$. Indeed, if this were not so, then by convexity of $\Omega$ we would have
\[
\Omega\subseteq\bigcup_{p\in \partial^-v_0(a)}(\partial^-v_0)^{-1}(p)\,,
\]
where for each of  the sets $(\partial^-v_0)^{-1}(p)$ on the right hand side, all extreme points except $a$ are contained in $\partial\Omega$. But  
 $p\in \partial^-v_0(a)$ and $x\in (\partial^-v_0)^{-1}(p)\setminus \{a\}$ implies  that $x$ is an element but not an extreme point of $(\partial^-v_0)^{-1}(p)$, which in turn implies $\mu_{v_0}(\{x\})=0$. Hence $\supp \overline\mu=\supp\mu_{v_0}=\{a\}$, which is  a contradiction to the assumption that the support of $\overline \mu=\mu_{v_0}$ contains at least two points. This proves the claim that there exists $a'\in\{a_1,\dots,a_N\}\setminus \{a|$ such that $[aa']\subseteq(\partial^-v_0)^{-1}(p)$.
We note  that $a'$ is an extreme point of  ${(\partial^-v_0)^{-1}(p)}$, since otherwise we would again obtain by  an analogous  argument to the one that we have just used, that $\mu_{v_0}(\{a'\})=0$. 

\medskip

So let $p\in \partial^-v_0(a)\cap \partial^-v_0(a')$ with $a,a'\in \{a_1,\dots,a_N\}$, $a\neq a'$, $a$ and $a'$ being extreme points of ${(\partial^-v_0)^{-1}(p)}$. Since $v_0=0$ on $\partial \Omega$ and $v_0<0$ on $\Omega$, we have that  ${(\partial^-v_0)^{-1}(p)}\cap \partial \Omega$ is either empty, consists of a single point, or of a line segment. 
 Since all extreme points of ${(\partial^-v_0)^{-1}(p)}$ lie either on $\partial \Omega$ or in $\{a_1,\ldots,a_N\}$, there must be a vertex $a''\in \partial(\partial^-v_0)^{-1}(p) \cap \{a_1,\ldots,a_N\} \setminus \{a\}$ (possibly, but not necessarily identical to $a'$) such that $[a\,a'']\subseteq \partial (\partial^-v_0)^{-1}(p)$. Without loss of generality, we may assume that the   conditions of Lemma \ref{lem:v0aux} are fulfilled for $a_1\equiv a$, $a_2\equiv a''$. Hence,  for $x\in [a a'']\setminus\{a,a''\}=[a_1 a_2]\setminus\{a_1,a_2\}$, $\#\partial^- v_0(x)>1$, which yields $(1,2)\in \rid$.
\end{proof}

\section{Proof of the upper bound}
\label{sec:proof-upper-bound}

\subsection{Proof of the upper bound for convex out-of-plane component}

\label{sec:ub-convex}

\newcommand{\w}{\mathbf{w}}

The following lemma is a statement about the distributional ``double curl'' of a $\R^{2\times 2}_{\mathrm{sym}}$ valued function $m$. We will later apply it to the case
\[m=\frac12\left(\nabla \bu+\nabla \bu^T+\nabla v\otimes\nabla v-\nabla v_0\otimes \nabla v_0\right)\,.\]
In this case, we note that the distribution  $\curl\curl m\in W^{-2,2}$  is given by 
\[
\curl\curl \frac12\left(\nabla \bu+\nabla \bu^T+\nabla v\otimes\nabla v-\nabla v_0\otimes \nabla v_0\right)=\mu_{v_0}-\mu_v\,\,,
\]
as can be seen by approximation of $v,v_0$ by smooth functions and a simple calculation.
This identity has already been used   to prove lower bounds in similar situations (see \cite{gladbach2024variational,olber2019crump,olbermann2018shape,Olbdisc}). 
In the statement of the lemma we will write $e(\bu):=\frac12(\nabla \bu+\nabla \bu^T)$.
\begin{lemma}
\label{lem:W-22L2equality}
Let $U\subset\R^2$ be open bounded and simply connected, and  $m\in L^2(U;\R^{2\times 2}_{\mathrm{sym}})$. Then
\[
\min_{\bu\in W^{1,2}(U)} \|e(\bu)+m\|_{L^2(U)}= \|\curl\curl m\|_{W^{-2,2}(U)}\,.
\]
\end{lemma}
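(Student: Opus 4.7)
The plan is to prove the two inequalities separately. Throughout I will use the pointwise identity $\cof A : \cof B = A:B$ for all $2\times 2$ matrices (in particular $|\cof A| = |A|$), the Piola identity $\dive \cof \nabla^2\phi = 0$, and the fact that in $2D$
\[
\cof\nabla^2\phi = \begin{pmatrix}\partial_2^2\phi & -\partial_1\partial_2\phi\\ -\partial_1\partial_2\phi & \partial_1^2\phi\end{pmatrix},\qquad \curl\curl m = \partial_1^2 m_{22}+\partial_2^2 m_{11}-2\partial_1\partial_2 m_{12},
\]
so that after integration by parts $\langle \curl\curl m,\phi\rangle = \int_U m:\cof\nabla^2\phi\,\d x$ for $\phi\in W^{2,2}_0(U)$. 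I will use on $W^{2,2}_0(U)$ the norm $\|\nabla^2\phi\|_{L^2}$ (equivalent to the full $W^{2,2}$-norm by Poincar\'e applied twice), and correspondingly the dual norm for $W^{-2,2}(U)$.

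For the $\geq$ direction: fix $\bu\in W^{1,2}(U;\R^2)$ and $\phi\in W^{2,2}_0(U)$. Approximating $\phi$ in $W^{2,2}$ by $C^\infty_c(U)$ functions and using the Piola identity together with the symmetry of $\cof\nabla^2\phi$ (which eliminates the skew part of $\nabla\bu$), one gets $\int_U e(\bu):\cof\nabla^2\phi\,\d x=0$. Therefore
\[
\langle \curl\curl m,\phi\rangle = \int_U (e(\bu)+m):\cof\nabla^2\phi\,\d x \leq \|e(\bu)+m\|_{L^2}\,\|\nabla^2\phi\|_{L^2},
\]
and taking sup over $\phi$ and then inf over $\bu$ yields $\|\curl\curl m\|_{W^{-2,2}(U)}\leq\inf_{\bu}\|e(\bu)+m\|_{L^2}$.

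For the $\leq$ direction I will construct a minimizer via an Airy-type potential. The bilinear form $B(\phi,\psi)=\int_U \nabla^2\phi:\nabla^2\psi\,\d x$ is coercive on $W^{2,2}_0(U)$, and $\psi\mapsto\langle \curl\curl m,\psi\rangle=\int_U m:\cof\nabla^2\psi\,\d x$ is bounded on $W^{2,2}_0(U)$ with norm $\|\curl\curl m\|_{W^{-2,2}(U)}$. Hence Lax--Milgram provides a unique $\phi\in W^{2,2}_0(U)$ with
\[
\int_U \nabla^2\phi:\nabla^2\psi\,\d x = \langle\curl\curl m,\psi\rangle\qquad\forall\psi\in W^{2,2}_0(U).
\]
Rewriting the left-hand side as $\int_U \cof\nabla^2\phi:\cof\nabla^2\psi\,\d x$ shows that $\curl\curl(\cof\nabla^2\phi-m)=0$ as a distribution on $U$. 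Because $U$ is simply connected, the $2D$ Saint-Venant compatibility lemma (twice applying the simply-connected Poincar\'e-type lemma: first to produce the skew correction, i.e.\ a scalar $a\in L^2$ with $\nabla a=(\partial_2 m_{11}-\partial_1 m_{12}-\partial_2(\cof\nabla^2\phi)_{11}+\partial_1(\cof\nabla^2\phi)_{12},\,\ldots)$, and then to integrate the resulting curl-free matrix) gives $\bu\in W^{1,2}(U;\R^2)$ with $e(\bu)=\cof\nabla^2\phi-m$. For this $\bu$
\[
\|e(\bu)+m\|_{L^2}=\|\cof\nabla^2\phi\|_{L^2}=\|\nabla^2\phi\|_{L^2},
\]
while testing the weak formulation against $\psi=\phi$ gives
\[
\|\nabla^2\phi\|_{L^2}^2=\langle\curl\curl m,\phi\rangle\leq \|\curl\curl m\|_{W^{-2,2}(U)}\|\nabla^2\phi\|_{L^2},
\]
so $\|e(\bu)+m\|_{L^2}\leq \|\curl\curl m\|_{W^{-2,2}(U)}$, completing the equality and also showing the infimum is attained.

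The step I expect to require the most care is the Saint-Venant recovery of $\bu$ from the distributional identity $\curl\curl n=0$ on the simply connected $U$ at $L^2$ regularity; it is exactly here that simple connectedness enters and one has to be a little careful about the regularity of the intermediate Airy-type scalar $a$ before obtaining $\bu\in W^{1,2}$.
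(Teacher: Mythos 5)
Your proposal is correct, and the $\geq$ direction coincides with the paper's. For the $\leq$ direction you take a genuinely different, in fact dual, route. The paper takes a minimizer $\bu$ of $\|e(\bu)+m\|_{L^2}^2$ (existence needs Korn plus the direct method, which the paper leaves implicit), reads off from the Euler--Lagrange equation that $e(\bu)+m$ is divergence-free, extends it by zero to $\R^2$, and constructs an Airy potential $\psi$ with $\cof\nabla^2\psi=(e(\bu)+m)\chi_U$; simple connectedness of $U$ (hence connectedness of $\R^2\setminus U$) is used to subtract a single affine function so that $\psi|_U\in W^{2,2}_0(U)$. You go the other way: you solve directly for the Airy potential $\phi\in W^{2,2}_0(U)$ by Lax--Milgram (really Riesz, since the biharmonic form is symmetric), use $\cof A:\cof B=A:B$ to pass from $\nabla^2\phi:\nabla^2\psi$ to $\cof\nabla^2\phi:\cof\nabla^2\psi$, and then recover $\bu$ from $\curl\curl(\cof\nabla^2\phi-m)=0$ via two-dimensional Saint-Venant compatibility, where simple connectedness enters. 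One modest advantage of your route is that it proves existence of a minimizer rather than invoking it; one modest advantage of the paper's is that the Airy-potential step is perhaps more familiar than the $L^2$-level Saint-Venant recovery. Both proofs have a ``delicate'' step that quietly needs some mild regularity of $\partial U$ beyond the stated hypotheses (you flag yours---the $W^{1,2}$, not just $W^{1,2}_\loc$, recovery of $\bu$; the paper's is the claim $\psi|_U\in W^{2,2}_0(U)$), but in both cases this is harmless for the domains to which the lemma is applied.
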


\newcommand{\bv}{\mathbf{v}}

\begin{proof}
We start off by noting that
\[
\|\curl\curl m\|_{W^{-2,2}(U)}=\sup_{\varphi\in W^{2,2}_0(U)}\frac{\langle \curl\curl m,\varphi\rangle_{W^{-2,2},W^{2,2}_0}}{\|\nabla^2\varphi\|_{L^2(U)}}\,.
\]
For every $\varphi\in W^{2,2}_0(U)$ and $\bu\in W^{1,2}(U;\R^2)$, we obtain by two integrations by
parts, and the Cauchy-Schwarz inequality,
\[
\begin{split}
  \langle\curl\curl m,\varphi\rangle_{W^{-2,2},W^{2,2}_0}&=
\langle\curl\curl (m+e(\bu)),\varphi\rangle_{W^{-2,2},W^{2,2}_0}\\
&=  \int_{U} (m+e(\bu)):\cof \nabla ^2\varphi\d x\\
  &\leq  \|m+e(\bu)\|_{L^2(U)}
  \|\varphi\|_{W_0^{2,2}(U)}\,.
\end{split}
\]
This proves $\|\curl\curl m\|_{W^{-2,2}(U)}\leq \min_{\bu\in W^{1,2}(U)} \|e(\bu)+m\|_{L^2(U)}$. To show the opposite inequality, 
let $u$ be the minimizer of $\|e(\bu)+m\|_{L^2(U)}^2$.
Then $\div (e(\bu)+m)=0$ in the sense
\[
\int_U (e(\bu)+m)\cdot\nabla \bv\d x=0 \quad\text{ for all } \bv\in W^{1,2}(U;\R^2)\,, 
\]
which implies
\[
\int_{\R^2} (e(\bu)+m)\chi_U\cdot\nabla \bv\d x=0 \quad\text{ for all } \bv\in W^{1,2}(\R^2;\R^2)\,.
\]
Hence there exists $\psi\in W^{2,2}_{\loc}(\R^2)$ such that $\cof \nabla^2\psi=(e(\bu)+m)\chi_U$.  
By adding an affine function, we may assume $\psi=0$ on $\R^2\setminus U$. Thus $\psi|_U\in W^{2,2}_0(U)$ and 
\[
\|e(\bu)+m\|^2_{L^2(U)}=\int_U (e(\bu)+m )\cof \nabla^2 \psi\d x=\langle\curl\curl m, \psi\rangle_{W^{-2,2},W^{2,2}_0}
\]
which yields
\[
\|e(\bu)+m\|_{L^2(U)}=\frac{\langle \curl\curl m,\psi\rangle_{W^{-2,2},W^{2,2}_0}}{\|\nabla^2\psi\|_{L^2(U)}}\,.
\]
\end{proof}

\begin{proof}[Proof of the upper bound in Theorem \ref{thm:main}]
For $(i,j)\in\rid$ (see Lemma \ref{lemma: 2d ridge} for the notation), we will write
\[
e_{ij} := \frac{a_j-a_i}{|a_j-a_i|}\,.
\]
According to Lemma \ref{lemma: 2d ridge}, for each $(i,j)\in\rid$, we may choose two points $b_{ij}^+, b_{ij}^-$ such that $\nabla v_0=p^+$ on $[a_ib_{ij}^+a_j]$ and $\nabla v_0=p^-$ on $[a_ib_{ij}^-a_j]$, for some $p^+,p^-\in\R^2$ with  $p^+\neq p^-$. 
 We may define the convex one-homogeneous function $W_{ij}:\R\to\R$ by requiring
\[
v_0(x)=v_0(a_i+e_{ij}\cdot (x-a_i) e_{ij})+ W_{ij}((x-a_i)\cdot e_{ij}^\bot)\quad \text{ for }x\in R_{ij}=[a_ib_{ij}^+a_jb_{ij}^-]\,.
\]
Now choose a convex $C^2$ function $w_{ij}:\R\to\R$ and $M_{ij}>0$ such that $w_{ij}\geq W_{ij}$, $w_{ij}(x)=W_{ij}(x)$ for $|x|>M_{ij}$ and 
\[
\int_{-M_{ij}}^{M_{ij}} (W'_{ij}(t))^2-(w_{ij}'(t))^2\d t=W_{ij}'(M_{ij})-W_{ij}'(-M_{ij})\,.
\]
Such a choice of $w_{ij},M_{ij}$ is always possible (note that the right hand side above does not depend on $M_{ij}$) and depends only on the two values of $W_{ij}'$ (and hence on $v_0)$.  
For $(i,j)\in \rid$ let $S_{ij}^\e$ denote the strip of width $2\e M_{ij}$ around $[a_i a_j]$, 
\[
S_{ij}^\e := \{x:(x-a_i)\cdot e_{ij} \in [0,|a_i-a_j|], \e^{-1}(x-a_i)\cdot e_{ij}^\bot \in[-M_{ij},M_{ij}]\}\,.
\]

For $\e>0$ we may  define the convex function $w_{ij,\e}:\Omega\to \R$, 
\[
w_{ij,\e}(x) := v_0(a_i+e_{ij}\cdot (x-a_i) e_{ij})+ \e w_{ij}(\e^{-1}(x-a_i)\cdot e_{ij}^\bot)
\]
We note that there exists $C_1>0$ that only depends on $v_0$ (and hence on $\Omega,\overline\mu$) such that
\[
v_0(x)\geq w_{ij,\e}(x) \text{ for } x\in \Omega \setminus \left(S_{ij}^\e\cup B(a_i,C_1\e)\cup B(a_j,C_1\e)\right)\text{ for all } (i,j)\in\rid\,,
\]
and 
\[
\|\nabla^2 w_{ij,\e}\|_{L^\infty}\lesssim \e^{-1}\,.
\]
Now we define the convex function 
\[
\tilde v_\e(x) := \max\{ v_0(x), w_{ij,\e}(x):(i,j)\in\rid\}\quad\text{ for } x\in\Omega\,.
\]
Writing
\[
  \begin{split}
B_\e&=\bigcup_{i=1}^N B(a_i,C_1\e)\\
S_\e&=\bigcup_{(i,j)\in\rid}^N S_{ij}^\e\setminus B_\e\,,
\end{split}
\]
our construction guarantees that
\[
\tilde v_\e\in C^2\left(\Omega\setminus B_\e \right)\,,
\]
with
  \begin{equation}
\label{eq:6}
\|\nabla^2 \tilde v_\e\|_{L^\infty\left(\Omega\setminus B_\e\right)}\leq C\e^{-1}\,\,,
\end{equation}
and 
\[
  \begin{split}
\|\nabla^2 \tilde v_\e\|_{L^2\left(\Omega\setminus B_\e\right)}^2&\lesssim 
\int_{\Omega\setminus(B_\e \cup S_\e)}|\nabla^2 v_0|^2\d x+\sum_{(i,j)\in \rid}\int_{S_{ij}^\e}|\nabla^2 w_{ij,\e}|^2\d x\\
&\leq C \left(\frac{1}{\log \e}+\e^{-1}\right)\,.
\end{split}
\]
We further modify $\tilde v_\e$ in order to obtain an analogous estimate on all of $\Omega$ by using Lemma \ref{lem:convex_smoothing}. More precisely, we extend $\tilde v_\e$ to a globally Lipschitz convex function on $\R^2$, and choose some convex open set $\tilde\Omega\cc\Omega$ with $\{a_1,\dots,a_N\}\subseteq\tilde\Omega$. We choose  $c>0$ and $\e_0$ that only depend on $v_0,C_1,\tilde\Omega$  such that for every $\e\in (0,\e_0)$ the following are fulfilled:  
\begin{itemize}
\item 
    The function $\hat v_\e:=P_{c\e}\tilde v_\e$
   satisfies $\hat v_\e=\tilde v_\e$ on $\tilde \Omega\setminus B_\e$,
\item $\|\nabla^2 v_0\|_{L^\infty(\Omega\setminus\tilde \Omega)}\leq C\e^{-1}$.
\end{itemize}
Then we set
\[
v_\e(x) :=
\begin{cases}
  v_0(x) &\text{ if } x\in \Omega\setminus\tilde\Omega\\
\hat v_\e(x)&\text{ if } x\in \tilde\Omega\,.
\end{cases}
\]
By construction this choice satisfies 
\[
  \begin{split}
    \|\nabla^2 v_\e\|_{L^\infty(\Omega)}&\leq C\e^{-1}\\
v_\e&=w_{ij,\e} \quad\text{ in  }S_{ij}^{\e}\setminus B_\e\\
    v_\e&=0\quad\text{ on }\partial\Omega\,.
  \end{split}
\]

Next we define the in-plane deformation $\bu_\e$. 
We first note that 
\[
\curl\curl\left( \nabla v_\e\otimes\nabla v_\e-\nabla v_0\otimes \nabla v_0\right)
\]
has support in 
\[
\cup_{i=1}^N B(a_i,C_1\e)\,.
\]
For $i=1,\dots,N$ let $\mathcal V^i_0,\mathcal V_\e^i$ be defined by 
\[
  \begin{split}
    \mathcal V^i_0(x)&:=\sup \{p\cdot x+v_0(x):p\in \partial^-v_0(B(a_i,C_1\e))\}\\
    \mathcal V^i_\e(x)&:=\sup \{p\cdot x+v_\e(x):p\in \partial^-v_\e(B(a_i,C_1\e))\}
  \end{split}
\]
With this definition we have 
\[
  \begin{split}
    \nabla v_\e\otimes\nabla v_\e-\nabla v_0\otimes\nabla v_0-\left(\nabla \mathcal V^i_\e\otimes\nabla \mathcal V^i_\e-\nabla\mathcal V^i_0\otimes\nabla \mathcal V^i_0\right)&=0 \quad\text{ on } B(a_i,\e)\\
    \curl\curl\left(\nabla \mathcal V^i_\e\otimes\nabla \mathcal V^i_\e-\nabla\mathcal V^i_0\otimes\nabla \mathcal V^i_0\right)&=0\quad\text{ on }\Omega\setminus B(a_i,C_1\e)\,.
  \end{split}
\]
Let us write
\[
m_i := \nabla \mathcal V^i_\e\otimes\nabla \mathcal V^i_\e-\nabla\mathcal V^i_0\otimes\nabla \mathcal V^i_0 \,.
\]
By Lemma \ref{lem:W-22L2equality}, 
\[
\|\curl\curl m_i\|_{W^{-2,2}(\Omega)}\leq \|m_i\|_{L^2(\Omega)}\leq C\e^2\,.
\]
Furthermore, 
\[
  \begin{split}
    \min_{\bu\in W^{1,2}(\Omega)} \left\|e(\bu)+\sum_{i=1}^N m_i\right\|_{L^2}&\leq \left\|\curl\curl\sum_{i=1}^N m_i\right\|_{W^{-2,2}(\Omega)}\\
                                             &\leq \sum_{i=1}^N \|\curl\curl m_i\|_{W^{-2,2}(\Omega)}\\
                                             &\leq C\e^2\,.
  \end{split}
\]
Choosing $\bu_\e$ as the minimizer of the left hand side above, and  $\e:=h^{2/3}$, we obtain 
\[
E_h(\bu_\e,v_\e)\leq C (\e^2+h^2\e^{-1})\leq C h^{4/3}\,.
\]
 \end{proof}

% \begin{remark}
%   \label{rem:log_ub}
% The logarithmic factor in the upper bound comes from the membrane energy density away from the vertices, see \eqref{eq:30}. Since $\nabla v_\e$ necessarily needs to transition between  values that differ by $O(1)$ when crossing a ridge, convexity forces the integral of the in-plane contribution to the strain $\nabla \bu+\nabla \bu^T$ across the ridge to be of the same order as the transition layer. The authors believe that this cannot be counterbalanced by a different choice of $v_\e$ in a way that would make the logarithmic factors disappear, since any deviation from $v_0$ far away from the vertices is energetically  costly.
% \end{remark}

\subsection{Upper bound for non-convex out-of-plane component}
\label{sec:ub-non-convex}
If we allow for non-convex out-of-plane deformations $v$, then we may achieve the scaling $h^{5/3}$. In the statement below, $\Omega\subseteq\R^2$ and $\overline\mu=\sum_{i=1}^N\sigma_i\delta_{a_i}$  are as in   Theorem \ref{thm:main}. 

\begin{proposition}
\label{prop:non-convex}
  There exists $C>0$ that only depends on $\Omega, \overline\mu$ with the following property: For $h>0$ there exists $(\bu_h,v_h)\in W^{1,2}(\Omega;\R^2)\times W^{2,\infty}(\Omega)$ such that $v_h = 0$ on $\partial\Omega$ and
\[
E_h(\bu_h,v_h)\leq C h^{5/3}\,.
\]
\end{proposition}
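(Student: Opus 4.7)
The plan is to return to the construction of the convex upper bound in Section~\ref{sec:ub-convex}, but to replace the convex ridge smoothing by a non-convex ``Lobkovsky ridge'' profile of width $\eps = h^{1/3}$ (instead of $\eps=h^{2/3}$), in the spirit of the minimal ridge constructions of Venkataramani~\cite{MR2023444} and Conti--Maggi~\cite{MR2358334}. Without the convexity constraint in the admissible class, one may use a ridge profile that is not monotone in the transverse direction and carries a longitudinal sag which cancels most of the Gauss curvature defect produced by the transverse smoothing.

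I partition $\Omega$ into small caps $B(a_i,\eps)$ around each vertex, rhombic strips $S_{ij}^\eps$ of width $2\eps M_{ij}$ around each ridge $[a_ia_j]$ (with the caps removed), and the complement. On the complement I set $v_h:=v_0$, which by Lemma~\ref{lemma: 2d ridge}(ii) contributes only bending $\lesssim h^2\log(1/\eps)=o(h^{5/3})$ and no stretching. In local tangent/normal coordinates $(t,s)$ on each rhombic strip, $v_0$ takes the form $f_{ij}(t)+c_{ij}|s|$ with $f_{ij}$ affine. I replace it by the Lobkovsky ansatz
\[
  v_h(t,s)\;=\;f_{ij}(t)+c_{ij}|s|+A(t)\,\Phi\!\left(s/w(t)\right),
\]
where $\Phi\in C^2_c(\R)$ is a fixed non-convex bump with $\int\Phi=0$, $w(t)\sim\eps$ is a smoothly varying width tapering to $0$ at the endpoints $a_i,a_j$, and $A(t)\sim\eps$ is a matching sag amplitude. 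Near each vertex I use a smoothed cone of radius $\eps$ as in Section~\ref{sec:ub-convex}. All modifications are compactly supported in $\Omega$, so the boundary condition $v_h=0$ on $\partial\Omega$ is preserved.

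A direct computation, paralleling the classical Lobkovsky scaling, gives bending $h^2\int_{S^\eps_{ij}}|\nabla^2 v_h|^2\lesssim h^2/\eps$ per ridge, while the longitudinal variation of the sag yields exactly the cancellation of transverse Gauss curvature defect that produces $\|\curl\curl m\|_{W^{-2,2}(\Omega)}\lesssim \eps^{5/2}$, where $m:=\tfrac12(\nabla v_h\otimes\nabla v_h-\nabla v_0\otimes\nabla v_0)$. Applying Lemma~\ref{lem:W-22L2equality} and choosing $\bu_h\in W^{1,2}(\Omega;\R^2)$ as the minimizer of $\|e(\bu)+m\|_{L^2}^2$, the stretching is bounded by $\eps^5$, so
\[
  E_h(\bu_h,v_h)\;\lesssim\;h^2/\eps+\eps^5,
\]
which is minimized at $\eps=h^{1/3}$ with value $h^{5/3}$.

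The main obstacle is the detailed design of the sag functions $(A(t),w(t),\Phi)$ so that the three requirements --- correct bending scaling, optimal smallness of $\curl\curl m$ in $W^{-2,2}$, and smooth matching with the vertex caps and the flat exterior --- are simultaneously met. This is essentially the content of the minimal ridge constructions in~\cite{MR2023444, MR2358334}, and the task here is to adapt those constructions from the geometrically fully nonlinear elasticity setting to the partially linearized F\"oppl--von K\'arm\'an functional \eqref{eq:1}, which is actually somewhat easier thanks to the linearity of the bending term in $\nabla^2 v$ and the availability of Lemma~\ref{lem:W-22L2equality} for disposing of the in-plane degree of freedom $\bu$.
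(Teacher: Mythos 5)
Your proposal correctly identifies the mechanism — replace the convex transverse smoothing of width $h^{2/3}$ by a genuine Lobkovsky-type ridge of width $\eps\sim h^{1/3}$, exploiting a longitudinal sag to cancel most of the Gauss-curvature defect — and the scaling balance $h^2/\eps + \eps^5$ at $\eps=h^{1/3}$ is the right one. But as written it is not a proof; it is an outline that delegates precisely the hard part.

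The paper's route is shorter and cleaner: it does \emph{not} rebuild the Lobkovsky ansatz from scratch. Instead it cites \cite[Lemma 16]{conti2015symmetry}, which is exactly the F\"oppl--von K\'arm\'an adaptation of the Conti--Maggi minimal ridge that you describe wanting to derive. For each ridge $(i,j)\in\rid$, that lemma directly produces $(\bu_h^{ij},w_h^{ij})$ on the rhombus $R_{ij}$ satisfying the $h^{5/3}$ bound on $R_{ij}\setminus(B(a_i,h)\cup B(a_j,h))$, matching $v_0$ and an explicitly constructed compatible in-plane map $\bu_0^{ij}$ at the boundary of $R_{ij}$ (so the pieces glue); the remaining smoothing is done on caps of radius $h$. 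Your alternative of choosing the in-plane field by Lemma~\ref{lem:W-22L2equality} would work in principle, but you would still have to prove the $W^{-2,2}$ smallness of $\curl\curl m$ for your ansatz, which is the entire substance of the ridge construction and not a corollary of anything established in the paper.

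Two concrete gaps that would cause the argument, as stated, to fail. First, your ansatz
\[
v_h(t,s)=f_{ij}(t)+c_{ij}|s|+A(t)\,\Phi(s/w(t))
\]
retains the transverse kink $c_{ij}|s|$; adding a $C^2_c$ bump $\Phi$ does not regularize it, so $\nabla^2 v_h$ has a line Dirac along the ridge and $v_h\notin W^{2,\infty}$, making the bending term infinite. The sag must be combined with an actual transverse mollification of $c_{ij}|\cdot|$ on scale $w(t)$. Second, your vertex caps have radius $\eps=h^{1/3}$. Modifying the cone on a ball of radius $\rho$ changes $\nabla v$ by $O(1)$ there, so the induced metric defect $m$ has $\|m\|_{L^2}\sim\rho$, giving a stretching contribution $\sim\rho^2$. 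With $\rho=h^{1/3}$ this is $h^{2/3}\gg h^{5/3}$, ruining the bound. The paper uses caps of radius $h$, which contribute only $O(h^2)$ stretching and $O(h^2)$ bending. You would need $\rho\lesssim h^{5/6}$ at the very least, and then check compatibility with the tapering of the ridge width $w(t)\to 0$ near the vertex.

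In short: the strategy and the scaling heuristics are right, but the proposal neither supplies the ridge construction nor cites the existing lemma that encodes it, and the two explicit choices ($\Phi$ as a mere additive bump, caps of radius $h^{1/3}$) are wrong as stated.
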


Before proving Proposition \ref{prop:non-convex}, we note that in combination with Theorem \ref{thm:main} it implies  that minimizers of $E_h$ have non-convex out-of-plane component:

\begin{corollary}
\label{cor:min-non-convex}
There exists $h_1<h_0$ such that the following holds true for every $0<h<h_1$: If $(\bu_h,v_h)\in \argmin_{v=0\text{ on }\partial\Omega} E_h$, then $v_h $ is non-convex.
\end{corollary}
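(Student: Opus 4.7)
The plan is to derive the corollary as a direct consequence of the mismatch between the two scaling laws: the lower bound $h^{4/3}(\log\tfrac{1}{h})^{-1/2}$ from Theorem~\ref{thm:main}, valid over the convex class $\mathcal{A}$, and the strictly smaller upper bound $h^{5/3}$ from Proposition~\ref{prop:non-convex}, valid without the convexity constraint. Since $h^{5/3}$ is asymptotically much smaller than $h^{4/3}(\log\tfrac{1}{h})^{-1/2}$ as $h\to 0$, a minimizer cannot lie in the convex class for sufficiently small $h$.

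More concretely, let $C$ denote the larger of the constants appearing in Theorem~\ref{thm:main} and Proposition~\ref{prop:non-convex}. Choose $h_1 \in (0, h_0)$ small enough so that
\[
 C^2\, h^{1/3}\left(\log\frac{1}{h}\right)^{1/2} < 1 \quad \text{for all } 0 < h < h_1,
\]
which is possible because the left-hand side tends to $0$ as $h\to 0$. This inequality is equivalent to
\[
 C\,h^{5/3} < \frac{1}{C}\, h^{4/3}\left(\log\frac{1}{h}\right)^{-1/2}.
\]

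Now fix $0 < h < h_1$ and let $(\bu_h,v_h)$ be a minimizer of $E_h$ over pairs satisfying $v=0$ on $\partial\Omega$ (without any convexity assumption). By Proposition~\ref{prop:non-convex} applied to the test pair constructed there, we have
\[
 E_h(\bu_h,v_h) \leq C\, h^{5/3}.
\]
Suppose, for contradiction, that $v_h$ is convex. Then $(\bu_h,v_h)\in \mathcal{A}$, so by the lower bound in Theorem~\ref{thm:main},
\[
 E_h(\bu_h,v_h) \geq \inf_{\mathcal{A}} E_h \geq \frac{1}{C}\, h^{4/3}\left(\log\frac{1}{h}\right)^{-1/2}.
\]
Combining the two inequalities contradicts the choice of $h_1$. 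Hence $v_h$ cannot be convex, proving the corollary.

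There is essentially no obstacle: the whole statement is a packaging of the two previously established scaling laws. The only mild subtlety is ensuring that a minimizer of $E_h$ actually exists in the unconstrained class so that one can speak of $\argmin$; if existence is not taken for granted, the same reasoning applies verbatim to any pair $(\bu_h,v_h)$ with $E_h(\bu_h,v_h)$ sufficiently close to the infimum, but the statement as phrased assumes the minimizer as given, so no additional argument is needed.
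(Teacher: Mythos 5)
Your proof is correct and is exactly the argument the paper intends: the corollary is stated immediately after Proposition \ref{prop:non-convex} with only the remark that it follows ``in combination with Theorem \ref{thm:main},'' and you have simply spelled out the elementary comparison of the two scaling laws that the authors leave implicit.
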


The proof of Proposition \ref{prop:non-convex} is based on  Lemma 16 from \cite{conti2015symmetry}, which itself is an adaptation of the construction for the single ridge by Conti and Maggi \cite{MR2358334} to the F\"oppl--von K\'arm\'an approximation. Below we  cite the F\"oppl--von K\'arm\'an version in full for the reader's convenience. In the lemma we use the notation 
\[
\overline E_h[\bu, v, W]=\int_W |\nabla\bu+\nabla\bu^T+\nabla v\otimes\nabla v|^2+h^2|\nabla ^2 v|^2 \d x\,.
\]
% where $v_0$ is the unique solution of the Dirichlet problem \eqref{eq: MAD}.
\begin{lemma}{\cite[Lemma 16]{conti2015symmetry}}
\label{lem:ridge}
 Let $[abcd]\subseteq \R^2$ be a non-degenerate quadrilateral with diagonals $[ac]$
 and $[bd]$, contained in the square with diagonal $[ac]$. Furthermore, let
 $(\bu_0,w_0):[abcd]\to \R^2\times \R$ be continuous functions,
 affine on $[abc]$ and $[adc]$, with $\nabla \bu_0+\nabla \bu_0^T+\nabla w_0\otimes \nabla w_0=0$. Then for all $h\in (0,l/8)$ there are $(\bu_h, w_h)\in C^2([abcd]\setminus\{a,c\};\R^2\times \R)$ 
 such that $\bu_0=\bu_h$, $\nabla \bu_0=\nabla \bu_h$, $w_0=w_h$ on $\partial[abcd]\setminus\{a,c\}$, $|\nabla \bu_h|+|\nabla w_h|\le C(\bu_0,w_0)$ and
 \begin{equation}\label{eq:11}
  \overline E_h[\bu_h,w_h, [abcd]\setminus (B(a,h) \cup B(c,h))]\le C h^{5/3}l^{1/3}\,,
 \end{equation}
where $l=|a-c|$, and
the constant $C$ may depend on the angles of $[abcd]$, and on $\bu_0$ and $w_0$, but not on $h$.
\end{lemma}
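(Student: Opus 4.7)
The plan is a direct Venkataramani--type construction in Föppl--von Kármán variables, combined with an in-plane corrector produced via Lemma \ref{lem:W-22L2equality}. I would work in coordinates $(s,t)$ in which $[ac]=[0,l]\times\{0\}$ and the two triangles $[abc]$, $[adc]$ lie in $\{t>0\}$ and $\{t<0\}$ respectively. Write $\nabla w_0 = p_\pm$ on the two triangles and let $q:=(p_+-p_-)\cdot\mathbf{e}_t$ be the slope jump across $[ac]$; the compatibility $e(\bu_0)=-\tfrac12\nabla w_0\otimes\nabla w_0$ then determines $\bu_0$ up to an infinitesimal rigid motion, and one may assume $q\neq 0$ (otherwise $w_0$ is globally affine and the claim is trivial with $(\bu_h,w_h)=(\bu_0,w_0)$).

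Next, I would fix a smooth profile $\phi\in C^\infty(\R)$ with $\phi(\xi)=|\xi|/2$ for $|\xi|\geq 1$ and $\phi''\geq 0$, and choose a smooth width $W:(0,l)\to(0,\infty)$ of order $h^{1/3}l^{2/3}$ on the middle portion of $[0,l]$, tapering linearly to zero near $s=0,l$ so that the strip $\{|t|\leq W(s)\}$ is contained in $[abcd]$ and matches the affine data on $\partial[abcd]\setminus\{a,c\}$. Inside the strip I would set
\[
w_h(s,t) := \tfrac{p_++p_-}{2}\cdot(s\mathbf{e}_s+t\mathbf{e}_t) + q\,W(s)\,\phi\!\bigl(t/W(s)\bigr),
\]
and $w_h:=w_0$ outside. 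By construction $w_h=w_0$ and $\nabla w_h=\nabla w_0$ on $\partial[abcd]\setminus\{a,c\}$, and $w_h\in C^2$ away from the two tips.

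For the in-plane corrector $\tilde\bu$, the residual stretching tensor $R:=\nabla w_h\otimes\nabla w_h-\nabla w_0\otimes\nabla w_0$ is supported in the ridge strip, and a short computation shows $\operatorname{curl}\operatorname{curl} R = 2\det\nabla^2 w_h$, which is small because the transverse-profile part of $w_h$ is rank-one in the Hessian and $\det\nabla^2 w_h$ is driven only by the slow variation of $W$ in $s$. Applying Lemma \ref{lem:W-22L2equality} on a simply connected neighborhood of the strip (after subtracting off the $\bu_0$ contribution and extending the data affinely to the complement of $[abcd]$) produces $\tilde\bu\in W^{1,2}$ vanishing on $\partial[abcd]\setminus\{a,c\}$ with $\|e(\tilde\bu)+\tfrac12 R\|_{L^2}\lesssim\|\det\nabla^2 w_h\|_{W^{-2,2}}$. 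Setting $\bu_h:=\bu_0+\tilde\bu$ and estimating,
\[
h^2\!\int|\nabla^2 w_h|^2\,\d x\;\lesssim\; h^2 q^2\!\int_0^l\frac{\d s}{W(s)}\;\lesssim\; h^2\cdot\frac{l}{h^{1/3}l^{2/3}}\;=\;h^{5/3}l^{1/3},
\]
while a direct bound on $\det\nabla^2 w_h$ in terms of $(W')^2/W$ and duality against $W^{2,2}_0$ test functions supported in the strip yields the same order $h^{5/3}l^{1/3}$ for the stretching. Cutting out $B(a,h)\cup B(c,h)$ precisely removes the regime where the taper $W(s)\sim s$ forces the bending density $1/W$ to become non-integrable.

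The hard part is step three: forcing the corrector $\tilde\bu$ to simultaneously cancel $R$ to leading order, match the affine in-plane boundary data on $\partial[abcd]\setminus\{a,c\}$, and enjoy the sharp $W^{-2,2}$ estimate above. The first two requirements are in tension, since cancelling $R$ inside the strip in general perturbs its boundary trace off the affine data. Reconciling them amounts to choosing $\phi$ and the taper of $W$ so that $R$ is, up to a negligible remainder, the symmetric gradient of a vector field compactly supported in $\{|t|\leq W(s)\}$; the gain from the near-developability $\det\nabla^2 w_h=O((W'/W)^2)$ in $W^{-2,2}$ is what reduces the stretching from its naive $h^{4/3}$ ridge-strip scaling down to the matching $h^{5/3}l^{1/3}$.
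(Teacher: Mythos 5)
There is a genuine gap, and it sits exactly where you place the weight of the argument. For your ansatz $w_h=\text{affine}+q\,W(s)\,\phi\bigl(t/W(s)\bigr)$ a direct computation gives, with $\xi=t/W(s)$,
\[
\det\nabla^2 w_h \;=\; q^2\,\frac{W''(s)}{W(s)}\,\phi''(\xi)\,\bigl(\phi(\xi)-\xi\,\phi'(\xi)\bigr)\,,
\]
the $(W')^2$ contributions cancelling identically; so your claim that $\det\nabla^2 w_h=O\bigl((W'/W)^2\bigr)$ and is ``small'' is not correct. This density has a definite sign (that of $W''$), and its total mass is $\simeq q^2\int_0^l|W''|\,\d s\gtrsim q^2 W_0/l$ with $W_0=\max W\sim h^{1/3}l^{2/3}$, spread along the whole strip. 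Testing with a fixed plateau function $\varphi\in W^{2,2}_0([abcd])$ equal to $1$ on the strip (so $\|\nabla^2\varphi\|_{L^2}\sim 1/l$; excising the two $h$-balls costs only logarithms, since points have vanishing $W^{2,2}$-capacity in two dimensions) gives $\|\det\nabla^2 w_h\|_{W^{-2,2}}\gtrsim q^2W_0\sim q^2h^{1/3}l^{2/3}$. By the \emph{equality} in Lemma \ref{lem:W-22L2equality} --- the very tool you invoke --- this means that no in-plane corrector, even one ignoring the boundary conditions, can bring the membrane energy of your $w_h$ below $\sim q^4h^{2/3}l^{4/3}$, which exceeds the target $h^{5/3}l^{1/3}$ by a factor of order $h^{-1}$. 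Re-optimizing the width within your ansatz caps the total energy at order $h^{4/3}$: your construction is essentially the mollified (convex) ridge of Section \ref{sec:ub-convex}, and it provably cannot reach the $h^{5/3}$ bound of the lemma. The boundary-matching difficulty for the corrector that you flag is real but secondary to this.

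The missing idea is the \emph{sag} of the ridge line. One must add to the ansatz a further out-of-plane term $f(s)\simeq -c_\phi\,q\,W(s)$ along the ridge (equivalently, build the profile so that the midline is displaced downwards rather than raised by $q\phi(0)W(s)$). Since this adds $f''\,\partial_t^2 w_h$ to the determinant, $f$ can be chosen so that the transverse average of $\det\nabla^2 w_h$ vanishes for every $s$; the linearized Gauss curvature then becomes a dipole across the width $W$, its $W^{-2,2}$ norm drops to the required order $h^{5/6}l^{1/6}$, and one can in fact write an explicit in-plane displacement cancelling the strain pointwise up to a residual of order $q^2(W')^2\sim q^2(h/l)^{2/3}$, whose square over the strip of area $lW_0$ gives $q^4h^{5/3}l^{1/3}$; this explicit corrector also settles the boundary matching. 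This sag plus explicit corrector is the heart of the Conti--Maggi construction and of its F\"oppl--von K\'arm\'an version in \cite{conti2015symmetry}; note that the paper itself does not prove the lemma but cites it, so the comparison here is with that known construction. The distinction you are missing is precisely the one the paper emphasizes in Corollary \ref{cor:min-non-convex}: the convex (no-sag) ridge costs $h^{4/3}$, and only the non-convex, sagging ridge attains $h^{5/3}l^{1/3}$.
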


\begin{proof}[Proof of Proposition \ref{prop:non-convex}]
Using the notations of Lemmata \ref{lemma: 2d ridge} and \ref{lem:ridge}, for every pair $(i,j)\in\rid$, we may write $R_{ij}=[abcd]$ with $a_i=a$, $a_j=c$, set $w_0^{ij}=v_0$ and define $\bu_0^{ij}$ by requiring it to be continuous on $[abcd]$ and affine on both $[abc]$ and $[adc]$, with
  \begin{equation}\label{eq:10}
\nabla \bu_0^{ij}+ \left(\nabla\bu_0^{ij}\right)^T=-\nabla v_0\otimes \nabla  v_0\,,
\end{equation}
where $v_0$ is the unique solution of \eqref{eq: MAD}.
The equation \eqref{eq:10} indeed possesses a solution: Note that after a suitable rotation of the domain, we may assume that $[ac]\subseteq\R\times\{0\}$, and 
\[
 v_0(x)=\begin{cases} A_1x_1+A_2x_2&\text{ if }x_2\geq 0\\
    A_1 x_1+A_3x_2&\text{ if }x_2< 0\,.\end{cases}
\]
Then a solution of \eqref{eq:10} is given by 
\[
  \bu_0^{ij}(x_1,x_2)=\begin{cases}(A_4x_1+A_5x_2,A_6x_2) &\text{ if }x_2\geq 0\\
    (A_4 x_1+A_7 x_2,A_8x_2)&\text{ if }x_2< 0\end{cases}
\]
where $A_4,\dots,A_8$ are given by 
\[
-2A_4=A_1^2, \quad -A_5=A_1A_2,\quad -2A_6=A_2^2,\quad  -A_7=A_1A_3, \quad-2A_8=A_3^2\,.
\]
Applying Lemma \ref{lem:ridge} we obtain $(\bu_h^{ij},w_h^{ij})$ in $C^2([abcd]\setminus\{a,c\};\R^2\times\R)$ that satisfy the bound \eqref{eq:11}, and $\bu_h^{ij}=\bu_0^{ij}$, $w_h^{ij}=v_0$ on $\partial[abcd]$. This finishes the construction on $R_{ij}$. Now we set
\[
\bu_h=
\begin{cases}
\bu_h^{ij}-\bu_0^{ij}& \text{ in }R_{ij}  \text{ for }(i,j)\in \rid\\
0 & \text{ in }\Omega\setminus \bigcup_{(i,j)\in \rid}R_{ij}
\end{cases}
\]
and 
\[
v_h=
\begin{cases}
  w_h^{ij}&\text{ in }R_{ij}  \text{ for }(i,j)\in \rid\\
v_0 & \text{ in }\Omega\setminus \bigcup_{(i,j)\in \rid}R_{ij}\,.
\end{cases}
\]
The energy satisfies
\begin{equation*}
  \begin{split}
\int_{\Omega\setminus  \bigcup_{i=1}^N B(a_i,h)}&|\nabla \bu_h+\nabla \bu_h^T+\nabla v_h\otimes\nabla v_h-\nabla v_0\otimes \nabla v_0|^2+h^2|\nabla^2v_h|^2\d x\\
&=\sum_{(i,j)\in\rid}\overline E_h\left[\bu_h^{ij},w_h^{ij},R_{ij}\setminus (B(a_i,h)\cup B(a_j,h))\right]\\
&\quad +h^2\int_{\Omega\setminus(\cup_{(i,j)\in \rid}R_{ij}\cup(\cup_{i=1}^N B(a_i,h)))}|\nabla^2v_0|^2\d x\\
& \le C h^{5/3}\,.
\end{split}
\end{equation*}
It remains  to smoothen our construction on $\bigcup_{i=1}^N B(a_i,h)$. 
For notational simplicity we discuss only the treatment of one vertex placed in the origin, $a_1=(0,0)$.
Let $\phi_h\in C^\infty_c(B(0,h))$ be such that $\phi=1$ on $B(0,h/2)$. Let
$\eta\in C^\infty_c(B(0,1))$ be a standard mollifier, and let $\eta_h(\cdot)=h^{-2}\eta(\cdot/h)$.
We set $\tilde v_h=\phi_h (v_h\ast \eta_h) + (1-\phi_h)v_h$.
Then $\|\nabla^2\tilde v_h\|^2_{L^2(B(0,h))} \le C$. We note that $\nabla v_h$ is uniformly bounded, therefore 
$\|\nabla v_h\otimes \nabla v_h-\nabla \tilde v_h\otimes \nabla \tilde v_h\|^2_{L^2(B(0,h))} \le C h^2$. We conclude that
\begin{equation*}
  \begin{split}
 E_h(\bu_h, \tilde v_h)
&=\int_{\Omega}|\nabla \bu_h+\nabla \bu_h^T+\nabla \tilde v_h\otimes\nabla \tilde v_h-\nabla v_0\otimes \nabla v_0|^2+h^2|\nabla^2v_h|^2\d x\\
& \le C h^{5/3}.
\end{split}
\end{equation*} 
This proves the proposition.
\end{proof}

\section{Proof of the lower bound}
\label{sec:lb}
In the present section, we will prove the lower bound in Theorem \ref{thm:main}. This will proceed in several steps. As before, $v_0$ denotes the unique solution of the Monge-Amp\`ere equation \eqref{eq: MAD}.

\medskip

%  In Subsection \ref{sec:w-22norm} we recall the lower bound estimate 
% \[
% \|\nabla \bu+\nabla \bu^T+\nabla v\otimes\nabla v-\nabla v_0\otimes \nabla v_0\|_{L^2(\Omega)}\gtrsim\|\mu_v-\overline \mu\|_{W^{-2,2}(\Omega)}
% \]
% that has already been used in \cite{olber2019crump,gladbach2024variational}. This justifies our approach to look for lower bounds of the functional 
% \[
% I_h(v)=\|\mu_v-\overline\mu\|_{W^{-2,2}(\Omega)}^2+h^2\|\nabla^2 v\|_{L^2(\Omega)}^2
% \]
% in the sequel. 

We will  consider sequences $v_h$ that satisfy the upper bound in Theorem \ref{thm:main}. By Lemma \ref{lem:W-22L2equality}, this implies $I_h(v_h)\lesssim h^{4/3}$. For such a sequence, we will  first prove ``suboptimal'' estimates for $\|v_h-v_0\|_{L^\infty}$. More precisely, we will prove 
in Subsection \ref{sec:suboptimal}  that $\|v_h-v_0\|_{L^\infty(\Omega)}\lesssim h^\gamma$ for some fixed positive $\gamma$. We call these estimates suboptimal since we will prove much sharper ones later on. 

In Subsection \ref{sec:estim-nearly-conic}, we consider the behavior of $v_h$ satisfying the upper bound near any one of the vertices $a_i$, $i=1,\dots,N$. By the suboptimal estimates obtained in Subsection \ref{sec:suboptimal}, we know that for a suitably chosen affine function $\tilde \ell^i$,  $v_h$ is close to the ``conical'' function $\hat v:=L_{\{v_h=\tilde \ell^i\}\cup\{a_i\}}v$ on $\{v_h\leq \tilde \ell^i\}$, see Figures \ref{fig:singlevertexcut} and \ref{fig:sketch2}. We prove a statement that allows us to upgrade the suboptimal estimates to \emph{optimal} estimates for $|v_h(x)-\hat v(x)|$ on $\{v_h\leq \tilde \ell^i\}$, see Proposition \ref{prop:conical_estimate}. Here and in the following, by ``optimal'' estimates we mean that one can construct examples in which the estimate is attained up to  at most a logarithmic factor  in the small parameter $h$.

\begin{figure}[h]
  \centering
  \includegraphics[height=5cm]{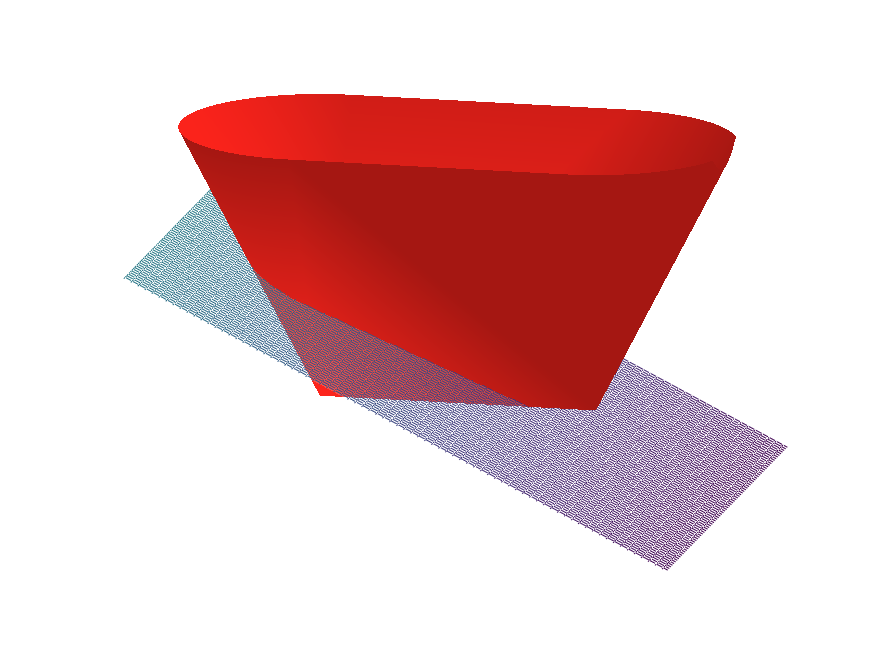}
\caption{The graphs of $v_h$ and $\tilde \ell^i$, with $v_h(a_i)<\tilde\ell^i(a_i)$, $v_h(a_l)>\tilde \ell^k(a_l)$ for $l\in\{1,\dots,N\}\setminus\{i\}$. The map $\tilde \ell^i$ will be constructed in Proposition \ref{prop: ridge} as a slight perturbation of some other affine map $ \ell^i$, introduced in Lemma \ref{lem:ridgeaux}.\label{fig:singlevertexcut}} \end{figure}

% \begin{figure}[h]
%   \centering
%   \includegraphics[height=5cm]{cut.png}
% \caption{Considering one edge $(i,j)\in\rid$, the estimates from Proposition \ref{prop:suboptimal} guarantee that a configuration $v_h$ with low energy is close  to $v_0$, $\|v_h-v_0\|\lesssim h^\gamma$. Hence, fixing suitable affine functions $\tilde \ell^k$, $k\in\{i,j\}$, the sets $\partial E^k=\{v_h=\ell^k\}$  will satisfy (after a translation of the vertex $a_k$ to the origin) the hypothesis of Proposition \ref{prop:conical_estimate} \label{fig:sketch1}}. \end{figure}

In Subsection  \ref{sec:ridges}, we consider a ridge $(i,j)\in \rid$ (where we are using the notation introduced in Lemma \ref{lemma: 2d ridge}), and the rhombus $R_{ij}=[a_ib_{ij}^+a_jb_{ij}^-]$, on which the gradient of $v_0$ has exactly two values. We use the results from the previous subsections as follows: By the suboptimal estimates from Subsection \ref{sec:suboptimal}, $v$ is close to $v_0$ on the corners of $R_{ij}$. Consider the continuous function $w_h$ that is affine on $[a_ib^+_{ij}a_j]$ and on $[a_ib^-_{ij}a_j]$, with the values of $w_h$ at  $a_i,a_j,b_{ij}^\pm$  given by those of $v_h$. By the results of Subsection \ref{sec:estim-nearly-conic}, we have that $v_h$ is close to $w_h$ on $\partial R_{ij}$, with \emph{optimal} estimates, see Figures \ref{fig:sketch2} and \ref{fig:sketch3}. Using the generalized monotonicity property of the Monge-Amp\`ere measure (Proposition \ref{prop: comparison}), we are able to transfer these estimates from the boundary to the bulk of $R_{ij}$. This crucial step is contained in Proposition \ref{prop: ridge}.

\begin{figure}
  \centering
  \includegraphics[height=5cm]{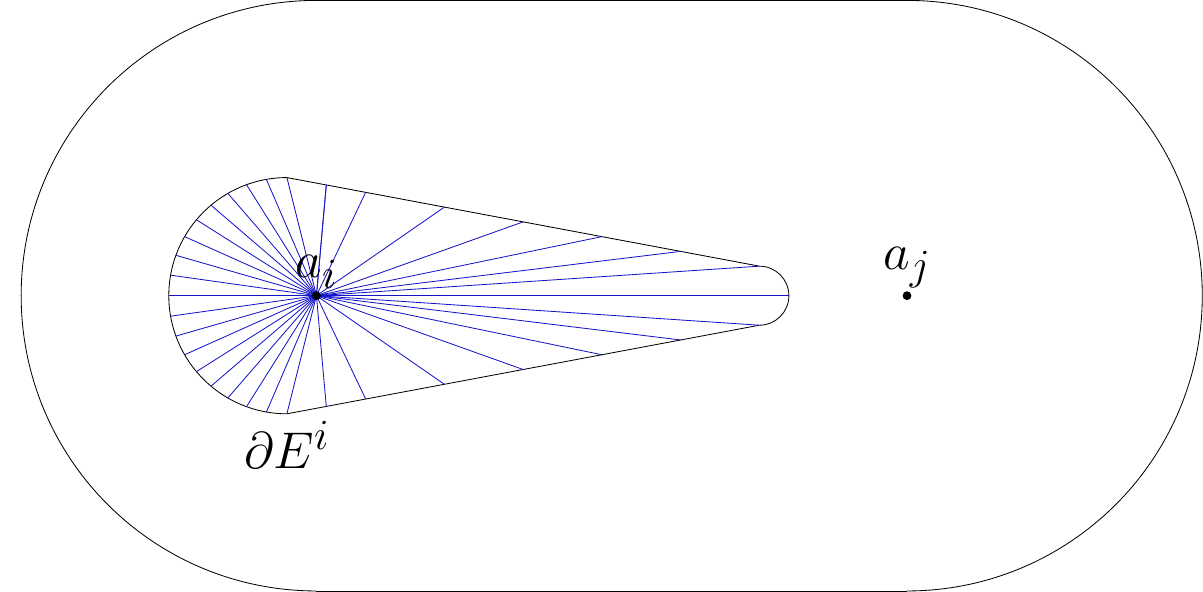}
\caption{The set $E^i=\{x:v_h(x)\leq \tilde \ell^i(x)\}$. The function $\hat v$ is defined by requiring to be affine on each of the line segments $[a_ix]$ with $x\in \partial E^i$, agreeing with $v_h$ on the endpoints of these line segments.  By Proposition \ref{prop:conical_estimate}, $\|v_h-\hat v\|_{L^\infty}$ is  controlled by the energy. This  estimate is optimal in the sense explained in the outline of the proof at the beginning of Section \ref{sec:lb}.  \label{fig:sketch2}}
\end{figure}

\begin{figure}
  \centering
\includegraphics[height=5cm]{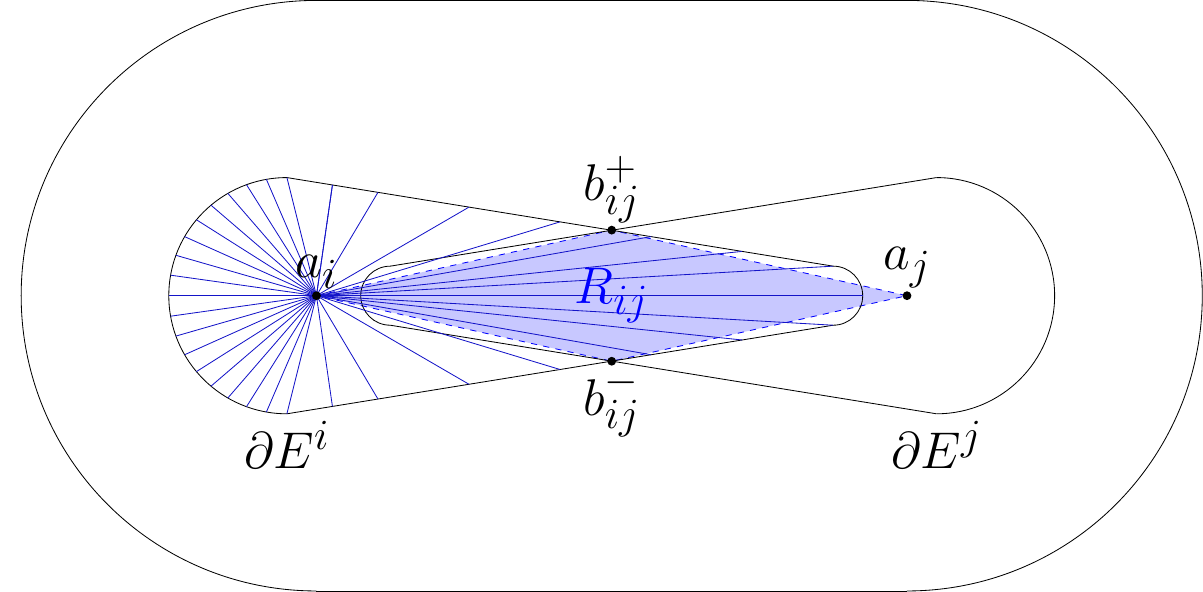}
  \caption{Since $ b^\pm_{ij}\in\partial E^i\cap \partial E^j$ and $R_{ij}=[a_i b_{ij}^+a_j b_{ij}^-]$, the optimal estimates for $v_h$ hold on all of $\partial R_{ij}$. These estimates control the distance of $v_h$ from a continuous function on $\partial R_{ij}$ that is affine on $[a_i b_{ij}^+a_j]$ and on $[a_i b_{ij}^-a_j]$. \label{fig:sketch3}}
\end{figure}

The latter allows us to prove the lower bound in Subsection  \ref{sec:lower_bound}: The optimal bounds in the interior imply that there is a small transition layer parallel to the ridge of size $\approx h^{-2/3}$ (up to logarithms in $h$), across which the gradient $\nabla v_h$ has a jump of order one. This is enough to establish the lower bound.

\begin{remark}
\label{rem:log_lb}
  The logarithmic factors in the lower bound  have the following source: When estimating the $W^{-2,2}$ norm of $\mu_v-\overline\mu$ from below (in the ``optimal'' estimates Propositions \ref{prop:conical_estimate} and \ref{prop: ridge}), we have to work with test functions that behave approximately like $x\mapsto|x-a_i|$, which fail to be  elements of $W^{2,2}$, producing  logarithmic singularities after smoothing on the appropriate scale. It seems that this cannot be improved within our method of proof, since we need to work with non-negative test functions that vanish at vertices (where $\overline\mu$ is concentrated) and are positive everywhere else in order to detect the deviations of $v$ from $v_0$. 
\end{remark}

\subsection{Suboptimal global estimates}
\label{sec:suboptimal}
% In the current subsection we prove suboptimal estimates for $v(x)-v_0(x)$ for $v$ with small energy in the sense $\|\mu_v-\overline \mu\|_{W^{-2,2}}^2+h^2\|\nabla^2 v\|_{L^2(\Omega)}^2\lesssim h^{4/3}\left(\log \frac{1}{h} \right)^{1/3}$.

We begin by stating that the solution of the discrete Monge-Amp\`ere Dirichlet problem \eqref{eq: MAD} is stable under a change in the weights of $\mu$:

\begin{lemma}\label{lem:stability}
  Let $\Omega\subseteq \R^n$ be open, bounded, convex. Let $\mu = \sum_{i=1}^N \sigma_i \delta_{a_i}, \nu = \sum_{i=1}^N \tau_i \delta_{a_i}\in \mathcal{M}_+(\Omega)$ be two non-negative discrete measures with the same support. Let $u,v\in C^0(\overline \Omega)$ be the respective solutions to the Dirichlet problems
  \begin{equation}
    \begin{cases}
      \mu_u = \mu , \mu_v = \nu&\text{ in }\Omega\\
      u=v=0 &\text{ on }\partial \Omega.
    \end{cases}
  \end{equation} 

Then
\begin{equation}
\|u-v\|_{L^\infty} \leq C(\Omega, N) \sum_{i=1}^N |\sigma_i^{1/n} - \tau_i^{1/n}|.
\end{equation}
\end{lemma}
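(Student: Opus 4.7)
My plan is to first reduce by monotonicity to the case $\sigma_i\geq\tau_i$ for all $i$, and then construct an explicit lower barrier for $u$ of the form $v$ plus a sum of scaled piecewise affine \emph{cones}. The quantitative input will be the Brunn--Minkowski inequality applied to subdifferentials at each vertex, together with the Monge--Amp\`ere comparison principle.

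For the reduction, I let $\rho_i:=\max(\sigma_i,\tau_i)$ and let $w\in C^0(\overline\Omega)$ be the solution with weights $\rho_i$. Since
\[
  (\rho_i^{1/n}-\sigma_i^{1/n})+(\rho_i^{1/n}-\tau_i^{1/n})=|\sigma_i^{1/n}-\tau_i^{1/n}|,
\]
the triangle inequality $\|u-v\|_\infty\leq\|u-w\|_\infty+\|v-w\|_\infty$ reduces the lemma to the one-sided estimate $\|v-u\|_\infty\leq C(\Omega,N)\sum_i(\sigma_i^{1/n}-\tau_i^{1/n})$ whenever $\sigma_i\geq\tau_i$ for every $i$, in which case the classical Monge--Amp\`ere comparison principle already yields $u\leq v$ in $\Omega$.

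For the one-sided estimate, for each vertex $a_i$ I let $\psi_i\in C^0(\overline\Omega)$ denote the piecewise affine convex cone with $\psi_i(a_i)=-1$, $\psi_i=0$ on $\partial\Omega$, and $\psi_i$ affine on every segment joining $a_i$ to $\partial\Omega$ (equivalently, $\psi_i=-1$ plus the Minkowski gauge of $\Omega-a_i$). This $\psi_i$ is Lipschitz, takes values in $[-1,0]$, satisfies $\psi_i=L_K\psi_i$ with $K:=\partial\Omega\cup\{a_1,\ldots,a_N\}$, so by Lemma \ref{lem: lifting} its Monge--Amp\`ere measure on $\Omega$ is $\beta_i\delta_{a_i}$ for the constant $\beta_i:=\L^n(\partial^-\psi_i(a_i))>0$, which depends only on $\Omega$ and $a_i$. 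I now set $c_i:=(\sigma_i^{1/n}-\tau_i^{1/n})/\beta_i^{1/n}\geq 0$ and define
\[
  \tilde v:=v+\sum_{i=1}^N c_i\psi_i\leq v,
\]
which is convex and vanishes on $\partial\Omega$. At each vertex $a_j$ the subgradient inclusion for sums of convex functions gives
\[
  \partial^-\tilde v(a_j)\supseteq\partial^-v(a_j)+c_j\partial^-\psi_j(a_j),
\]
up to a Lebesgue-measure-preserving translation coming from the remaining $\psi_i$, and the Brunn--Minkowski inequality then yields $\mu_{\tilde v}(\{a_j\})^{1/n}\geq\tau_j^{1/n}+c_j\beta_j^{1/n}=\sigma_j^{1/n}$.

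To conclude, I pass to the lift $\hat v:=L_K\tilde v$, which by Lemma \ref{lem: lifting} is convex, equals $\tilde v$ on $K$ (so $\hat v=0$ on $\partial\Omega$), satisfies $\hat v\geq\tilde v$ in $\Omega$, and has $\mu_{\hat v}|_\Omega$ concentrated on $\{a_1,\ldots,a_N\}$; the inclusion $\partial^-\tilde v(a_j)\subseteq\partial^-\hat v(a_j)$ preserves the vertex masses, hence $\mu_{\hat v}|_\Omega\geq\sum_j\sigma_j\delta_{a_j}=\mu_u|_\Omega$. The Monge--Amp\`ere comparison principle applied to $\hat v$ and $u$, both vanishing on $\partial\Omega$, then forces $\hat v\leq u$ in $\Omega$, and combined with $\tilde v\leq\hat v$ this gives
\[
  0\leq v-u\leq v-\tilde v=-\sum_{i=1}^N c_i\psi_i\leq\sum_{i=1}^N c_i\leq C(\Omega,N)\sum_{i=1}^N(\sigma_i^{1/n}-\tau_i^{1/n}).
\]
The main difficulty is the Brunn--Minkowski step at each vertex: it is precisely the $n$-th root scaling of Monge--Amp\`ere measures under dilation that makes the estimate linear in the differences $\sigma_i^{1/n}-\tau_i^{1/n}$ rather than in the differences of the weights themselves, and this is what fixes the particular normalization of $c_i$.
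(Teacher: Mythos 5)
Your proof is correct, and it follows a genuinely different route from the paper's. The paper argues by contradiction: writing $\delta=\|u-v\|_{L^\infty}$, it locates the maximum of $u-v$ at a vertex, introduces the level sets $A_k=\{a_j:u(a_j)-v(a_j)\geq k\delta/N\}$, applies a pigeonhole argument to find a gap $A_{k_0}=A_{k_0+1}$, and then shows that $\partial^-v(A_{k_0})$ contains the $\frac{\delta}{N\diam\Omega}$-neighborhood of $\partial^-u(A_{k_0})$; Brunn--Minkowski on these subdifferential sets then gives $\delta\lesssim\mu_v(A_{k_0})^{1/n}-\mu_u(A_{k_0})^{1/n}$, and the $\ell^n\leq\ell^1$ embedding plus the reverse triangle inequality finishes. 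You instead construct an explicit barrier: after reducing to the one-sided case $\sigma_i\geq\tau_i$ via the auxiliary solution $w$ with weights $\max(\sigma_i,\tau_i)$, you lower $v$ by a linear combination of gauge cones to produce $\tilde v\leq v$, apply Brunn--Minkowski to the subdifferential at each vertex to verify $\mu_{\tilde v}(\{a_j\})\geq\sigma_j$, and then invoke the lift $L_K$ and the classical comparison principle to trap $u$ between $\tilde v$ and $v$. Both arguments hinge on Brunn--Minkowski for subdifferentials; the paper's is a direct estimate that exposes the exact $N$ dependence through the pigeonhole parameter, while your barrier construction is arguably cleaner conceptually (it separates the cone geometry from the comparison step) and actually yields a constant depending only on $\Omega$ via the polar-body lower bound $\beta_i\geq\omega_n/\diam(\Omega)^n$, with the factor of two from the triangle inequality reduction being the only overhead. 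One small point worth tightening in your write-up: the phrase ``up to a Lebesgue-measure-preserving translation coming from the remaining $\psi_i$'' is imprecise when $\partial\Omega$ is not smooth (the sets $\partial^-\psi_i(a_j)$ for $i\neq j$ need not be singletons), but this is harmless since the extra Minkowski summands only increase the measure in the Brunn--Minkowski chain.
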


\begin{proof}
Let $\delta = \|u-v\|_{L^\infty} >0$. Without loss of generality, there is $x\in \Omega$ with $\delta = u(x) - v(x)$. Let $A:=(\partial^-v)^{-1}(\partial^-v(x))\subseteq \overline \Omega$ be the maximal simplex containing $x$ where $v$ is affine. Since $u-v$ is convex on $A$, its maximum is attained on the extreme points  of $A$. By Lemmata \ref{lem: lifting} and \ref{lem:v0LKv0eq}, the extreme points of $A$ are contained in $\partial\Omega\cup\{a_1,\dots,a_N\}$. Hence the maximum must be attained at some vertex, so that without loss of generality $x = a_1$. For $k =  0,\ldots,N$ let $A_k = \{a_j\,:\,u(a_j) - v(a_j) \geq \frac{k\delta}{N}\}$. Since $A_k \supseteq A_{k+1}$ and $A_N \supseteq \{a_1\}$, there is $k_0\in \{0,\ldots,N-1\}$ with $A_{k_0} = A_{k_0+1}$. We will show that 
\begin{equation}\label{eq: subgradients}
\partial^- v(A_{k_0}) \supseteq B\left(\partial^- u(A_{k_0}), \frac{\delta}{N\diam(\Omega)}\right),
\end{equation}
 so that by the Brunn-Minkowski inequality
\[
  \mu_v(A_{k_0})^{1/n} \geq \mu_u(A_{k_0})^{1/n} + c(n)\frac{\delta}{N\diam(\Omega)},
\]
which after rearranging yields
\[
  \|u-v\|_{L^\infty} = \delta \leq C(\Omega,N)(\mu_v(A_{k_0})^{1/n} - \mu_u(A_{k_0})^{1/n}) \leq C(\Omega,N) \sum_{i=1}^N |\sigma_i^{1/n} - \tau_i^{1/n}|.
\]

To show \eqref{eq: subgradients}, let $p\in \partial^- u(a_j)$ for some $a_j\in A_{k_0} = A_{k_0+1}$ and $|q|< \frac{\delta}{N\diam(\Omega)}$. We estimate
\[
  \begin{aligned}
(p+q)\cdot a_j - v(a_j) \geq & p\cdot a_j - u(a_j) + \frac{(k_0+1)\delta}{N} + q\cdot a_j  \\
\geq & \max_{y\in \partial\Omega \cup \{a_1,\ldots,a_N\} \setminus A_{k_0}} p\cdot y - u(y) + \frac{(k_0+1)\delta}{N} + q\cdot y - |q||y-a_j|\\
\geq & \max_{y\in \partial\Omega \cup \{a_1,\ldots,a_N\} \setminus A_{k_0}} (p+q)\cdot y - v(y) + \frac{\delta}{N}  - \frac{\delta}{N}.
  \end{aligned}
\]
By Lemma \ref{lem: lifting}, $\max_{y\in \partial\Omega \cup \{a_1,\ldots,a_N\} } (p+q)\cdot y - v(y)=\max_{y\in\Omega}(p+q)\cdot y - v(y)$, and hence  $p+q\in \partial^- v(A_{k_0})$, completing the proof.
\end{proof}

% The next proposition shows that if $\mu_u$ is small in $\Omega \setminus \{a_1,\ldots,a_N\}$, then $u$ is close to $L_{\partial \Omega \cup \{a_1,\ldots,a_N\}}u$, as long as $u$ is H\"older-continuous. 

In the upcoming lemma, we will use the notation 
\[
U_\rho := \{x\in U:\,\dist(x,\partial U)>\rho\}\,.
\]
for $U\subseteq\R^2$. 
The assumptions that will be made on $U$ are chosen such that they accommodate the case $U=\Omega\setminus\{a_1,\dots,a_N\}$, where $\Omega$ and $\{a_1,\dots,a_N\}$ are as in Theorem \ref{thm:main}.

\begin{lemma}
\label{lem:c0alphaestim}
Let $U \subseteq \R^2$ be open, bounded,  such that $\overline U$ is convex and $U_\rho$ has Lipschitz boundary, with 
\[
\L^2\left(\left\{x:\dist(x,\partial U_{\rho/2})<\frac{\rho}{3}\right\}\right)<C(U) \rho
\]
for all $\rho>0$. Furthermore let
 $\alpha \in (0,1]$, and $u\in C^{0,\alpha}(\overline U)$ convex. Then 
\begin{equation}\label{eq: u-Lu}
\|u-L_{\partial U}u\|_{L^\infty(U)} \leq C(U,\alpha)\|\mu_u\|_{W^{-2,2}(U)}^{\frac{4\alpha }{2(4\alpha  + 3)}}\|u\|_{C^{0,\alpha}}^{\frac{3}{4\alpha+3}}\,.
\end{equation}
\end{lemma}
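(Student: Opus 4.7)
The plan is to set $w := L_{\partial U}u - u$, which satisfies $w \geq 0$ on $\overline U$ and $w = 0$ on $\partial U$ by convexity of $u$, then to split $U$ into a boundary strip $U \setminus U_\rho$ and an interior piece $U_\rho$, bound $\|w\|_{L^\infty}$ on each, and optimize the free parameter $\rho > 0$. The H\"older norm $\|u\|_{C^{0,\alpha}}$ will govern the boundary strip, while $\|\mu_u\|_{W^{-2,2}(U)}$ will enter only in the interior.

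For the boundary estimate I would use the representation
\[
L_{\partial U}u(x) = \inf\Bigl\{\textstyle\sum_i \lambda_i u(y_i): y_i \in \partial U,\ \lambda_i \geq 0,\ \sum_i \lambda_i = 1,\ \sum_i \lambda_i y_i = x\Bigr\},
\]
which comes from $L_{\partial U}u = (R_{\partial U}u)^c$. For $x \in U \setminus U_\rho$ with $\rho_0 := \dist(x,\partial U) \leq \rho$, I would pick $y_1 \in \partial U$ a nearest point to $x$ and let $y_2 \in \partial U$ be the second intersection of the line through $y_1$ and $x$ with $\partial U$; writing $x = \lambda_1 y_1 + \lambda_2 y_2$ yields $\lambda_2 = \rho_0/|y_2-y_1| \in (0,1]$ and $|y_i - x| = \lambda_{3-i}|y_2 - y_1|$, and using $|y_2-y_1| \geq \rho_0$ together with $\alpha \leq 1$ gives
\[
L_{\partial U}u(x) - u(x) \leq \|u\|_{C^{0,\alpha}}\bigl(\lambda_1|y_1-x|^\alpha + \lambda_2|y_2-x|^\alpha\bigr) \leq 2\|u\|_{C^{0,\alpha}}\rho^\alpha.
\]

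For the interior estimate, I would observe that $\overline{U_\rho}$ is convex (a super-level set of the concave function $\dist(\cdot,\partial U)$ on $\overline U$) and that $\partial U_\rho$ sits at distance exactly $\rho$ from $\partial U$; the boundary estimate therefore reads $L_{\partial U}u - C\|u\|_{C^{0,\alpha}}\rho^\alpha \leq u$ on $\partial U_\rho$. Since the left-hand side is convex on $\overline{U_\rho}$, it is admissible in the supremum defining $L_{\partial U_\rho}u$, giving the pointwise bound $L_{\partial U}u \leq L_{\partial U_\rho}u + C\|u\|_{C^{0,\alpha}}\rho^\alpha$ on $\overline{U_\rho}$. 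Combined with Lemma~\ref{lemma: Alexandrov} applied on $U_\rho$, this produces
\[
\sup_{U_\rho} w \leq C\,\diam(U)\,\mu_u(U_\rho)^{1/2} + C\|u\|_{C^{0,\alpha}}\rho^\alpha.
\]
To convert $\mu_u(U_\rho)$ into the $W^{-2,2}$-norm I would use a cutoff $\phi \in C_c^\infty(U)$ equal to $1$ on $U_{5\rho/6}$ (hence on $U_\rho$), supported in $U_{\rho/6}$, with $|\nabla^k\phi| \lesssim \rho^{-k}$. Then $\supp\nabla^2\phi \subseteq \{\dist(\cdot,\partial U_{\rho/2}) < \rho/3\}$, whose $\L^2$-measure is $\leq C(U)\rho$ by hypothesis, so $\|\phi\|_{W^{2,2}_0(U)} \lesssim \rho^{-3/2}$ and $\mu_u(U_\rho) \leq \int \phi\,\d\mu_u \leq C\|\mu_u\|_{W^{-2,2}(U)}\rho^{-3/2}$. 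Putting everything together,
\[
\|w\|_{L^\infty(U)} \leq C\|u\|_{C^{0,\alpha}}\rho^\alpha + C\|\mu_u\|_{W^{-2,2}(U)}^{1/2}\rho^{-3/4},
\]
and balancing at $\rho = \bigl(\|\mu_u\|_{W^{-2,2}(U)}^{1/2}/\|u\|_{C^{0,\alpha}}\bigr)^{4/(4\alpha+3)}$ produces the exponents $\frac{3}{4\alpha+3}$ on $\|u\|_{C^{0,\alpha}}$ and $\frac{2\alpha}{4\alpha+3} = \frac{4\alpha}{2(4\alpha+3)}$ on $\|\mu_u\|_{W^{-2,2}(U)}$ exactly as in \eqref{eq: u-Lu}.

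The step I expect to be most delicate is the interior comparison $L_{\partial U}u \leq L_{\partial U_\rho}u + C\|u\|_{C^{0,\alpha}}\rho^\alpha$ on $\overline{U_\rho}$: it hinges on the shifted function being convex on the correct smaller domain and on the boundary estimate being available precisely on $\partial U_\rho$, so that it can be fed into the supremum defining the lifting on the inner convex domain. The secondary technical point is that the measure hypothesis on $U$ is phrased around $\partial U_{\rho/2}$ at width $\rho/3$, which matches precisely the transition region of the cutoff and yields the sharp $\rho^{-3/2}$ scaling of $\|\phi\|_{W^{2,2}_0(U)}$; a weaker geometric control on that annulus would degrade the final exponent.
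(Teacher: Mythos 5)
Your proof is correct and follows essentially the same strategy as the paper's: a boundary-strip estimate governed by $\|u\|_{C^{0,\alpha}}$, an interior estimate via a comparison of liftings plus the Alexandrov maximum principle, a cutoff to translate $\mu_u(U_\rho)$ into the $W^{-2,2}$ norm, and a balancing of $\rho$. Your packaging of the interior comparison (shift $L_{\partial U}u$ by $C\|u\|_{C^{0,\alpha}}\rho^\alpha$ and note it is admissible in the supremum defining $L_{\partial U_\rho}u$) is a slightly more direct reformulation of the paper's route through the double lifting $L_{\partial U\cup\partial U_\rho}u$, and your boundary-strip computation fills in a detail that the paper leaves as ``similarly''.

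One small technical point: your cutoff is described as $\phi=1$ on $U_{5\rho/6}$ and $\supp\phi\subseteq U_{\rho/6}$, from which you assert $\supp\nabla^2\phi\subseteq\{\dist(\cdot,\partial U_{\rho/2})<\rho/3\}$. That inclusion is not automatic for a generic such cutoff (e.g.\ near corners the set $\{\rho/6<\dist(\cdot,\partial U)<5\rho/6\}$ can stick out past the $\rho/3$-neighborhood of $\partial U_{\rho/2}$); the hypothesis is phrased precisely to match the mollified indicator $\phi_\rho=\eta_{\rho/3}*\mathds{1}_{U_{\rho/2}}$, whose Hessian is supported by construction in $\{\dist(\cdot,\partial U_{\rho/2})<\rho/3\}$. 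Replacing your cutoff with this one makes the step airtight and is what the paper does; the rest of your argument is untouched.
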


\begin{proof}
Let $\rho>0$ to be chosen later.  Let $\phi_\rho=\eta_{\rho/3}*\mathds{1}_{U_{\rho/2}}$, where $\eta_{\e}=\e^{-2}\eta(\cdot/\e)$ and $\eta\in C^\infty_c(\R^n)$ is a standard symmetric mollifier with $\supp\eta\subseteq B(0,1)$. Then $\phi_\rho\in C_c^\infty(U)$ with $\phi_\rho = 1$ in $U_\rho$ and 
\[
  \begin{split}
    \|\phi_\rho\|_{W^{2,2}_0}&\leq \left(\int_{U}|\nabla^2\phi_\rho|^2\d x\right)^{1/2}\\
                           &= \left(\int_{\{x:\dist(x,\partial U_{\rho/2})<\frac{\rho}{3}\}}|\nabla^2\phi_\rho|^2\d x\right)^{1/2}\\
                           &\leq \left(C\rho^{-4}\rho\right)^{1/2}\\
                           &\leq C\rho^{-3/2}\,.
  \end{split}
\]
Furthermore, 
\[
\mu_u(U_\rho)  \leq \int\phi_\rho \d\mu_u \leq \|\mu_u\|_{W^{-2,2}(U)} \|\phi_\rho\|_{W^{2,2}_0(U)} \leq C(U)\|\mu_u\|_{W^{-2,2}(U)}\rho^{-3/2}\,. 
\]
By the Alexandrov maximum principle, Lemma \ref{lemma: Alexandrov}, we have 
\[
\|u-L_{\partial U_\rho}u\|_{L^\infty(U_\rho)} \leq C(U)(\mu_u(U_\rho))^{1/2} \leq C(U)\|\mu_u\|_{W^{-2,2}}^{1/2}\rho^{-3/4}\,.
\]

On the other hand, we note that  $L_{\partial U\cup \partial U_\rho} u=L_{\partial U_\rho}u$ on $U_\rho$, and
\[
\|L_{\partial U} u - L_{\partial U\cup \partial U_\rho} u\|_{L^\infty(U)} = \|L_{\partial U} u - u\|_{L^\infty(\partial U_\rho)} \leq C(U,\alpha) \rho^\alpha \|u\|_{C^{0,\alpha}}.   
\]
Similarly, 
\[
\|u-L_{\partial U\cup \partial U_\rho} u\|_{L^\infty(U\setminus U_\rho)}\leq C(U,\alpha) \rho^\alpha \|u\|_{C^{0,\alpha}}\,.
\]
Putting everything together via the triangle inequality 
\[
  \begin{split}
\|u-L_{\partial U} u\|_{L^\infty(U)}&\leq \|u-L_{\partial U_\rho}u\|_{L^\infty(U_\rho)} +\|u-L_{\partial U\cup \partial U_\rho} u\|_{L^\infty(U\setminus U_\rho)}\\
&\quad +\|L_{\partial U} u - L_{\partial U\cup \partial U_\rho} u\|_{L^\infty(U)}
\end{split}
\]
and choosing the optimal $\rho$,
\[
  \rho =\left( \frac{\|\mu_u\|_{W^{-2,2}}^{1/2}}{\|u\|_{C^{0,\alpha}}} \right)^{\frac{1}{3/4+\alpha}}\,,
\]
yields \eqref{eq: u-Lu}.
\end{proof}

From now on we assume that  $v_h\in W^{2,2}(\Omega)$ satisfies the upper bound in Theorem \ref{thm:main}, in particular (using Lemma \ref{lem:W-22L2equality})
  \begin{equation}\label{eq:27}
I_h(v_h)\leq C_1 h^{4/3}\,.
\end{equation}

\begin{lemma}
\label{lem:lifting_comparison}
  Let $\overline \mu = \sum_{i=1}^N \sigma_i \delta_{a_i}\in \M_+(\Omega)$ and $\alpha\in(\frac34,1)$. Then there exist $C,h_0>0$ depending on $\Omega, C_1,\alpha,a_1,\dots,a_N$ such that for all $h<h_0$, and all
 $v_h\in W^{2,2}(\Omega)$  with 
$v_h = 0$ on $\partial \Omega$ satisfying \eqref{eq:27},
we have that 
\[
\|v_h-L_{\partial \Omega\cup\{a_1,\dots,a_N\}}v_h\|_{L^\infty}\leq C h^{\gamma}\,,
\]
where 
\[
\gamma\equiv \gamma(\alpha)=\frac{4\alpha-3}{3(4\alpha+3)}>0\,.
\]
\end{lemma}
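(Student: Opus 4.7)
The plan is to apply Lemma~\ref{lem:c0alphaestim} to $v_h$ on the punctured domain $U := \Omega \setminus \{a_1,\dots,a_N\}$. This domain satisfies $\overline U = \overline\Omega$ (so $\overline U$ is convex), and $\partial U = \partial\Omega \cup \{a_1,\dots,a_N\}$, which is exactly the set $K$ in the statement; in particular $L_{\partial U}v_h = L_K v_h$. For $\rho$ smaller than half the minimum pairwise distance among $\{a_1,\dots,a_N\}$ and from these points to $\partial\Omega$, the set $U_\rho = \Omega_\rho \setminus \bigcup_i \overline{B(a_i,\rho)}$ has Lipschitz boundary (thanks to the $C^{1,1}$ regularity of $\partial\Omega$ and the smooth spheres), and the tubular neighbourhood of $\partial U_{\rho/2}$ of width $\rho/3$ has Lebesgue measure bounded by a constant times $\rho$, with contributions of order $\rho$ from $\partial\Omega_{\rho/2}$ and of order $\rho^2$ from each of the $N$ circles.

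Two a priori estimates flow from the energy hypothesis. The bound $I_h(v_h)\leq C_1 h^{4/3}$, combined with Lemma~\ref{lem:W-22L2equality}, yields
\[
\|\mu_{v_h}-\overline\mu\|_{W^{-2,2}(\Omega)}\leq C h^{2/3}, \qquad \|\nabla^2 v_h\|_{L^2(\Omega)}\leq C h^{-1/3}.
\]
Since $v_h$ vanishes on $\partial\Omega$, Poincar\'e's inequality together with the two-dimensional Sobolev embedding $W^{2,2}(\Omega)\hookrightarrow C^{0,\alpha}(\overline\Omega)$, valid for every $\alpha\in(0,1)$, promotes the second estimate to
\[
\|v_h\|_{C^{0,\alpha}(\overline \Omega)}\leq C(\alpha)\, h^{-1/3}.
\]

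The decisive intermediate step is to pass from the $W^{-2,2}(\Omega)$ control of $\mu_{v_h}-\overline\mu$ to a $W^{-2,2}(U)$ control of $\mu_{v_h}$ itself. Given $\phi\in W^{2,2}_0(U)$, extending by zero outside $U$ produces $\tilde\phi\in W^{2,2}_0(\Omega)$ with the same norm; since $W^{2,2}(\Omega)\hookrightarrow C^0(\overline\Omega)$ in dimension two, the pointwise values $\tilde\phi(a_i)$ are well-defined, and they vanish because $\tilde\phi$ is a uniform limit of $C^\infty_c(U)$ functions, all of which are zero in a neighbourhood of each $a_i$. Consequently $\int\phi\,d\overline\mu=0$ and
\[
\Bigl|\int\phi\,d\mu_{v_h}\Bigr|=\Bigl|\int\tilde\phi\,d(\mu_{v_h}-\overline\mu)\Bigr|\leq C h^{2/3}\|\phi\|_{W^{2,2}_0(U)},
\]
so that $\|\mu_{v_h}\|_{W^{-2,2}(U)}\leq C h^{2/3}$.

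Feeding the two bounds into Lemma~\ref{lem:c0alphaestim} (and noting that $\|v_h-L_K v_h\|_{L^\infty(U)}=\|v_h-L_K v_h\|_{L^\infty(\Omega)}$ since the two continuous convex functions differ by a finite set of points) produces
\[
\|v_h-L_K v_h\|_{L^\infty(\Omega)}\leq C\,\bigl(h^{2/3}\bigr)^{\frac{2\alpha}{4\alpha+3}}\bigl(h^{-1/3}\bigr)^{\frac{3}{4\alpha+3}} = C h^{\frac{4\alpha-3}{3(4\alpha+3)}}= C h^{\gamma(\alpha)},
\]
which is exactly the claim, with the exponent positive iff $\alpha>3/4$ as assumed. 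The only genuinely nonroutine point is the transfer of the $W^{-2,2}$ estimate to the punctured domain: it rests on the elementary but essential observation that test functions in $W^{2,2}_0(U)$ annihilate the Dirac masses of $\overline\mu$, allowing one to replace $\mu_{v_h}$ by the small remainder $\mu_{v_h}-\overline\mu$ at no cost.
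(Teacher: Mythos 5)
Your proof is correct and follows the same route as the paper's: apply Lemma~\ref{lem:c0alphaestim} to the punctured domain $U=\Omega\setminus\{a_1,\dots,a_N\}$ together with the Sobolev bound $\|v_h\|_{C^{0,\alpha}}\lesssim h^{-1/3}$. Your one addition, making explicit how the $W^{-2,2}(\Omega)$ control of $\mu_{v_h}-\overline\mu$ becomes a $W^{-2,2}(U)$ control of $\mu_{v_h}$ because $W^{2,2}_0(U)$ test functions (being uniform limits of $C^\infty_c(U)$ in two dimensions) vanish at each $a_i$, is a correct and welcome clarification of a step the paper leaves implicit.
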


\begin{proof}
By the continuous embedding $W^{2,2}(\Omega)\hookrightarrow C^{0,\alpha}(\Omega)$, we have that $\|v_h\|_{C^{0,\alpha}(\Omega)}\leq C h^{-1/3}$. Thus we may apply Lemma \ref{lem:c0alphaestim} with $U=\Omega\setminus\{a_1,\dots,a_N\}$  to obtain 
\[
  \begin{split}
\|v_h-&L_{\Omega\cup\{a_1,\dots,a_N\}}v_h\|_{L^\infty}\\
&\leq C
\left(h^{2/3}\right)^{\frac{2\alpha}{4\alpha+3}}
\left(h^{-1/3}\right)^{\frac{3}{4\alpha+3}}\\
&\leq C h^{\frac{4\alpha-3}{3(4\alpha+3)}}\,.
\end{split}
\]
\end{proof}

\begin{lemma}
\label{lem:vhbound}
  Assume that $v_h\in W^{2,2}(\Omega)$  with 
$v_h = 0$ on $\partial \Omega$ satisfying \eqref{eq:27}. Then for $i=1,\dots,N$,
\[
|v_h(a_i)|\leq C\,.
\]

\end{lemma}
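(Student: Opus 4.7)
My plan is to argue by contradiction via a rescaling/compactness argument: if $M_h := \max_i |v_h(a_i)|$ were unbounded along some sequence $h_k\to 0$, the normalized sequence $\tilde v_k := v_{h_k}/M_k$ would converge uniformly on $\overline\Omega$ to a nontrivial convex limit $w^*$ with $\mu_{w^*}=0$, violating the Alexandrov maximum principle. The two inputs I would exploit are Lemma \ref{lem:lifting_comparison} and the $W^{-2,2}$ estimate $\|\mu_{v_h}-\overline\mu\|_{W^{-2,2}(\Omega)}\lesssim h^{2/3}$ that follows from the energy bound via Lemma \ref{lem:W-22L2equality}; the latter also gives that $\|\mu_{v_{h_k}}\|_{W^{-2,2}(\Omega)}$ is uniformly bounded.

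Concretely, if $M_k\to\infty$ I would extract a subsequence so that the maximum is realized at a fixed index $i^*$, and set $K := \partial\Omega\cup\{a_1,\dots,a_N\}$ and $\tilde w_k := L_K\tilde v_k = M_k^{-1}L_K v_{h_k}$. Lemma \ref{lem:lifting_comparison} gives $\|\tilde v_k-\tilde w_k\|_{L^\infty}\leq Ch_k^\gamma/M_k\to 0$. The liftings $\tilde w_k$ are convex, vanish on $\partial\Omega$, and have vertex values $\tilde v_k(a_i)\in[-1,0]$; this finite data in a compact range produces a uniform Lipschitz bound on $\overline\Omega$ depending only on $\Omega$ and $\{a_i\}$. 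Arzel\`a--Ascoli along a further subsequence then yields $\tilde w_k\to w^*$ uniformly on $\overline\Omega$, and hence also $\tilde v_k\to w^*$ uniformly on $\overline\Omega$. The limit $w^*$ is convex, continuous on $\overline\Omega$, with $w^*=0$ on $\partial\Omega$, $w^*\leq 0$, and $|w^*(a_{i^*})|=1$.

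Next, the scaling $\mu_{\tilde v_k}=M_k^{-2}\mu_{v_{h_k}}$ forces $\|\mu_{\tilde v_k}\|_{W^{-2,2}(\Omega)}\to 0$. Testing against any $\varphi\in C_c^\infty(\Omega)\subseteq W^{2,2}_0(\Omega)$ and using the weak-$*$ continuity of the Monge-Amp\`ere measure under uniform convergence of convex functions, one has $\int\varphi\,d\mu_{w^*}=\lim_k\int\varphi\,d\mu_{\tilde v_k}=0$, so $\mu_{w^*}=0$ on $\Omega$. Applying the Alexandrov maximum principle (Lemma \ref{lemma: Alexandrov}) with $U=\Omega$ and $u=w^*$, and noting $L_{\partial\Omega}w^*\equiv 0$ (since $w^*\leq 0$ vanishes on $\partial\Omega$), I would obtain $-w^*(a_{i^*})\leq C\diam(\Omega)\,\mu_{w^*}(\Omega)^{1/2}=0$, hence $w^*(a_{i^*})\geq 0$; combined with $w^*(a_{i^*})\leq 0$ this gives $w^*(a_{i^*})=0$, contradicting $|w^*(a_{i^*})|=1$.

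The main technical obstacle I anticipate is passing to a uniform limit all the way up to $\partial\Omega$: convex functions with uniformly bounded $L^\infty$ norm and zero boundary data need not have a uniform modulus of continuity near $\partial\Omega$, so a direct application of Arzel\`a--Ascoli only delivers convergence on compact subsets of $\Omega$, which is insufficient both to force $w^*=0$ on $\partial\Omega$ and to apply Alexandrov globally. Lemma \ref{lem:lifting_comparison} is precisely what circumvents this obstruction, since the piecewise affine liftings $\tilde w_k$ enjoy an automatic uniform Lipschitz bound on the closed domain $\overline\Omega$ that can then be transferred back to $\tilde v_k$ via the sup-norm approximation.
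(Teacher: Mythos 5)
Your proposal is correct but follows a genuinely different route from the paper. The paper's proof is direct and explicit: it writes
\[
|v_h(a_i)|\leq \sup_{\partial\Omega_\rho}|v_h| + \sup_{\Omega_\rho}\left|L_{\partial\Omega_\rho}v_h - v_h\right|,
\]
for the inner domain $\Omega_\rho=\{x:\dist(x,\partial\Omega)>\rho\}$, bounds the first term by $C\rho^\alpha h^{-1/3}$ via the $W^{2,2}\hookrightarrow C^{0,\alpha}$ embedding and $v_h=0$ on $\partial\Omega$, bounds the second by $C\mu_{v_h}(\Omega_\rho)^{1/2}$ via Lemma \ref{lemma: Alexandrov}, then estimates $\mu_{v_h}(\Omega_\rho)\leq \overline\mu(\Omega)+Ch^{2/3}\rho^{-3/2}$ with a cutoff satisfying $\|\phi_\rho\|_{W^{2,2}_0}\lesssim\rho^{-3/2}$, and finally optimizes $\rho=h^{1/3+\e}$. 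Notably it does \emph{not} invoke Lemma \ref{lem:lifting_comparison}.

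Your compactness-by-contradiction argument is valid and leans precisely on the machinery the paper sidesteps here: Lemma \ref{lem:lifting_comparison} to control $\|\tilde v_k - L_K\tilde v_k\|_{L^\infty}$, the uniform Lipschitz bound for the liftings $L_K\tilde v_k$ (which, as you correctly identify, is the crucial step that lets Arzel\`a--Ascoli give convergence up to $\partial\Omega$), and Alexandrov applied in the limit. The Lipschitz bound you assert does hold for the stated reasons: for $x\in\Omega$ and $p\in\partial^- L_K u(x)$, the supporting plane touches $L_K u$ at an extreme point of $\{$supporting plane $= L_K u\}$, which by Lemma \ref{lem: lifting} lies in $K=\partial\Omega\cup\{a_1,\dots,a_N\}$; if that point is some $a_i$, then $|p|\leq |u(a_i)|/\dist(a_i,\partial\Omega)\leq 1/\min_j\dist(a_j,\partial\Omega)$, while if it lies on $\partial\Omega$ one checks that $p=0$. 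The scaling $\mu_{\tilde v_k}=M_k^{-2}\mu_{v_{h_k}}$ together with the uniform $W^{-2,2}$ bound on $\mu_{v_{h_k}}$ kills the limit measure, and the Alexandrov contradiction at $a_{i^*}$ goes through. The trade-off: your route is conceptually cleaner and reuses the immediately preceding lemma, but it is nonconstructive (no explicit constant), whereas the paper's direct optimization yields one and is self-contained modulo Alexandrov and Sobolev embedding.
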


\begin{proof}
  For $\rho>0$, let $\Omega_\rho:=\{x\in \Omega:\dist(x,\partial\Omega)>\rho\}$. We estimate
\[
  \begin{split}
|v_h(a_i)|&\leq \left|L_{\partial\Omega_\rho}v_h(a_i)\right|+\left|L_{\partial\Omega_\rho}v_h(a_i)-v_h(a_i)\right|\\
&\leq \sup_{x\in\partial\Omega_\rho} |v_h|+\sup_{x\in \Omega_\rho}\left|L_{\partial\Omega_\rho}v_h(x)-v_h(x)\right|\,.
\end{split}
\]
Using again the fact that $\|v_h\|_{C^{0,\alpha}(\Omega)}\leq C(\alpha) h^{-1/3}$ for every $\alpha\in (0,1)$ by the continuous embedding $W^{2,2}(\Omega)\hookrightarrow C^{0,\alpha}(\Omega)$, we obtain
\[ 
\sup_{x\in\partial\Omega_\rho} |v_h|\leq C \rho^\alpha h^{-1/3}\,.
\]
By Lemma \ref{lemma: Alexandrov}, 
\[
\sup_{x\in \Omega_\rho}\left|L_{\partial\Omega_\rho}v_h(x)-v_h(x)\right|\leq C \mu_{v_h}(\Omega_\rho)^{1/2}\,.
\]
Let $\phi_\rho\in C^\infty_c(\Omega)$ such that $\phi_\rho=1$ on $\Omega_\rho$ and such that $\|\phi_\rho\|_{W^{2,2}_0(\Omega)}\leq C\rho^{-3/2}$ (see the  proof of Lemma \ref{lem:c0alphaestim} for the construction). We have that
\[
  \begin{split}
\mu_{v_h}(\Omega_\rho)&\leq\int\phi_\rho\d \mu_{v_h}\\
&\leq 
\int\phi_\rho\d\overline\mu+\|\mu_{v_h}-\overline\mu\|_{W^{-2,2}(\Omega)}\|\phi_\rho\|_{W^{2,2}_0(\Omega)}\\
&\leq \overline\mu(\Omega)+ Ch^{2/3}\rho^{-3/2}
\end{split}
\]
and inserting this last estimates in the previous ones, we obtain
\[
|v_h(a_i)|\leq C(\overline\mu,\Omega)+C(\alpha)\rho^{\alpha}h^{-1/3}+Ch^{2/3}\rho^{-3/2}\,.
\]
Choosing $\alpha=1-\e$ for some small $\e>0$, and $\rho=h^{1/3+\e}$, we obtain the claim of the present lemma.  
\end{proof}

Using the  lemmata above, we arrive at the suboptimal global estimate comparing low-energy competitors and the solution to \eqref{eq: MAD}:
\begin{proposition}
\label{prop:suboptimal}
  Let $\Omega \subseteq \R^2$  and $\overline \mu=\sum_{i=1}^N \sigma_i \delta_{a_i}$ be as in the statement of Theorem \ref{thm:main}, $v_h$ a sequence satisfying the upper bound \eqref{eq:27}, $\alpha\in (\frac34,1)$, 
and $h_0,C,\gamma=\gamma(\alpha)$ chosen as in  Lemma \ref{lem:lifting_comparison}. Then 
\[
\|v_h-v_0\|_{L^\infty}\leq C h^{\gamma}\quad\text{ for } h<h_0\,.
\]

\end{proposition}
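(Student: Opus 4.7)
Setting $K := \partial \Omega \cup \{a_1, \ldots, a_N\}$ and $w_h := L_K v_h$, by Lemma \ref{lem:v0LKv0eq} we have $v_0 = L_K v_0$, and by Lemma \ref{lem:lifting_comparison} $\|v_h - w_h\|_{L^\infty} \leq Ch^\gamma$. By the triangle inequality it therefore suffices to bound $\|w_h - v_0\|_{L^\infty}$ by a positive power of $h$. The functions $w_h$ and $v_0$ are both convex on $\overline\Omega$, vanish on $\partial \Omega$, and by Lemma \ref{lem: lifting} their Monge--Amp\`ere measures are supported on $\{a_1,\ldots,a_N\}$ inside $\Omega$; writing $\mu_{w_h}|_\Omega = \sum_i \tau_i^h \delta_{a_i}$ and applying Lemma \ref{lem:stability} to the pair $(w_h, v_0)$ with $n = 2$ yields
\[
  \|w_h - v_0\|_{L^\infty} \leq C\sum_i \bigl|(\tau_i^h)^{1/2} - \sigma_i^{1/2}\bigr|,
\]
reducing the task to bounding each $|\tau_i^h - \sigma_i|$ by a positive power of $h$.

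The main tool for this reduced estimate is Proposition \ref{prop: comparison} together with the $W^{-2,2}$-closeness $\|\mu_{v_h} - \overline\mu\|_{W^{-2,2}(\Omega)} \leq Ch^{2/3}$, which follows from the energy assumption $E_h(\bu,v_h) \leq C_1 h^{4/3}$ via Lemma \ref{lem:W-22L2equality}. Since $v_h \leq w_h$ in $\Omega$ with $v_h = w_h$ on $\partial\Omega$, Proposition \ref{prop: comparison} gives, for every non-negative concave $\phi \in C^0(\overline\Omega)$,
\[
  \sum_j \tau_j^h \phi(a_j) \leq \int_\Omega \phi \, d\mu_{v_h} = \sum_j \sigma_j \phi(a_j) + \int_\Omega \phi \, d(\mu_{v_h} - \overline\mu).
\]
The error integral is controlled by multiplying $\phi$ by a standard cutoff $\psi_\rho \in C^\infty_c(\Omega)$ vanishing near $\partial\Omega$ (as in the proof of Lemma \ref{lem:c0alphaestim}), with $\|\psi_\rho\|_{W^{2,2}_0} \leq C\rho^{-3/2}$, and balancing $\rho$ against $h$. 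The complementary lower bound on $\tau_i^h$ is obtained from the Alexandrov maximum principle (Lemma \ref{lemma: Alexandrov}) applied to $w_h$ on a small ball $B_r(a_i)$ containing no other vertex: by Lemma \ref{lem: lifting} the Monge--Amp\`ere mass of $w_h$ on $B_r(a_i)$ equals $\tau_i^h$, and this is then compared to $\mu_{v_h}(B_r(a_i))$ using the $W^{-2,2}$-cutoff estimate and the $L^\infty$-proximity $\|v_h - w_h\|_{L^\infty} \leq Ch^\gamma$.

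The hard part will be the last step. No non-negative concave function on the convex domain $\overline\Omega$ can vanish at one interior vertex while being positive at the others: by concavity and non-negativity, vanishing at an interior point forces the function to be identically zero on every line through that point lying in $\Omega$. Thus a single application of Proposition \ref{prop: comparison} cannot separate individual atomic masses; instead one has to combine a rich family of concave test functions -- for instance $\phi(x) = r - |x - p|$ with $p \in \R^2$ varying and $r \geq \max_{x \in \overline\Omega}|x-p|$, the constant $\phi \equiv 1$, and non-negative affine functions -- with the local Alexandrov bounds at each vertex in order to pin down each $\tau_i^h$ within error $Ch^{\gamma'}$ for some $\gamma' > 0$. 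Combined with Lemma \ref{lem:stability} and the triangle inequality, this yields $\|v_h - v_0\|_{L^\infty} \leq Ch^{\min(\gamma, \gamma'/2)}$, which (possibly after reducing the exponent $\gamma$ in the statement) proves the claim.
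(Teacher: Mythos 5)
Your framework is the same as the paper's: reduce to bounding $|\tau_i^h - \sigma_i|$ where $\mu_{w_h}|_\Omega = \sum_i \tau_i^h \delta_{a_i}$, then apply Lemma \ref{lem:stability} and the triangle inequality. The divergence, and the gap, is in how you actually bound $|\tau_i^h - \sigma_i|$.

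You correctly notice that Proposition \ref{prop: comparison} cannot separate individual atomic masses, because a non-negative concave function vanishing at one interior vertex must vanish on every chord through that vertex. But having made this observation, you then \emph{leave the hard part unproven}: your closing sentence lists a menu of candidate test functions (cone functions $r - |x-p|$, constants, affine pieces) and ``local Alexandrov bounds'' without specifying how to combine them, which power $\gamma'$ would come out, or even whether any combination works. As written, this is not a proof but a plan, and the plan does not obviously converge: concave test functions can only test weighted \emph{sums} $\sum_j \tau_j^h \phi(a_j)$, and the constraint set they generate does not pin down a single $\tau_i^h$ unless you can show that the excess $\sum_j \tau_j^h - \sum_j \sigma_j$ is itself small (which you get from $\phi = 1$), and then that no individual term can deviate much the \emph{other} way. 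The latter requires either a lower bound on each $\tau_i^h$ matching the global upper bound, or some independent input, and you do not supply it.

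The paper avoids Proposition \ref{prop: comparison} entirely for this step. It instead exploits the suboptimal $L^\infty$-closeness $\|v_h - w_h\|_{L^\infty} \leq Ch^\gamma$ (which you already have from Lemma \ref{lem:lifting_comparison}) to directly compare the subdifferential sets: (i) an inclusion $\{p\in\partial^- w_h(a_i) : \dist(p,\partial(\partial^- w_h(a_i))) > \|v_h-w_h\|_{L^\infty}/r\} \subseteq \partial^- v_h(B(a_i,r))$ shows $\tau_i^h \leq \mu_{v_h}(B(a_i,r)) + C\|v_h-w_h\|_{L^\infty}$; and (ii) a reverse inclusion $\partial^- v_h(B(a_i,r)) \subseteq \{p : \dist(p,\partial^- w_h(a_i)) < 4\|v_h-w_h\|_{L^\infty}/r\}$ shows $\tau_i^h \geq \mu_{v_h}(B(a_i,r)) - C\|v_h-w_h\|_{L^\infty}$. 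The quantities $\mu_{v_h}(B(a_i,r))$ are then pinned within $Ch^{2/3}$ of $\sigma_i$ by pairing $\mu_{v_h} - \overline\mu$ with smooth test functions $\phi_i,\Phi_i$ supported near $a_i$ and using the $W^{-2,2}$-smallness. Both inclusions are elementary manipulations of supporting affine functions; neither uses concavity of a test function, so the obstruction you ran into simply does not arise. You should replace the final paragraph of your proof with these two subdifferential-inclusion arguments (and the accompanying localized $W^{-2,2}$ pairing that controls $\mu_{v_h}(B(a_i,r))$), rather than attempting to push Proposition \ref{prop: comparison} further than it can go.
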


\begin{proof}
Let $r= \frac14 \min_{i,j}\min(|a_i-a_j|,\dist(a_i,\partial \Omega)) >0$. Find for each $i=1,\ldots,N$ two test functions $\phi_i,\Phi_i\in  C_c^\infty(\Omega)$ with $\phi_i(a_j) = \Phi_i(a_j) = \delta_{ij}$ and $0\leq \phi_i \leq \mathds{1}_{B(a_i,r)} \leq \Phi_i$. Then denoting  by $\langle\cdot,\cdot\rangle$ the dual pairing  $W^{-2,2}\times W^{2,2}_0\to\R$,  $|\langle \mu_{v_h}-\overline\mu,\phi_i\rangle|+|\langle \mu_{v_h}-\overline\mu,\Phi_i\rangle|\lesssim h^{2/3}$ by the assumed estimates on $I_h(v_h)$, and hence
  \begin{equation}\label{eq:12}
    \begin{split}
 \sigma_i-Ch^{2/3}&\leq \langle \mu_{v_h},\phi_i \rangle\\
& \leq    \mu_{v_h}(B(a_i,r))\\
& \leq \langle \mu_{v_h},\Phi_i \rangle \\
&\leq \sigma_i+Ch^{2/3}\,.
\end{split}
\end{equation}
Let us write
\[
w_h:=L_{\partial \Omega \cup \{a_1,\ldots,a_N\}}v_h\,.
\]
For every $i\in\{1,\dots,N\}$, $\partial^-w_h(a_i)$ is convex. Furthermore $|v_h(a_i)|\leq C$ by Lemma \ref{lem:vhbound} and hence 
  \begin{equation}\label{eq:25}
    \diam \partial^-w_h(a_i)\leq C
  \end{equation}
by the definition of $w_h$. 
Combining \eqref{eq:25} with  the convexity of $\partial^-w_h(a_i)$, we get that there exists $\e_0$ such that 
  \begin{equation}\label{eq:26}
    \L^2\left(\{p\in \R^2:\dist(p,\partial(\partial^-w_h(a_i))<\e\})\right) \leq C\e
  \end{equation}
for every $\e<\e_0$.

Now let  $p\in \partial^-w_h(a_i)$ with $\dist(p,\partial(\partial^-w_h(a_i))) \geq \frac{\|v_h -  w_h\|_{L^\infty}}{r}$. Then for any $x\in \overline \Omega$ with $|x-a_i|> r$ we may choose $p'\in \partial B\left(p,\frac{\|v_h-w_h\|_{L^\infty}}{r}\right)$ such that $(p-p')\cdot (x-a_i)=-|p-p'||x-a_i|$. Thus 
\[
    \begin{split}
p\cdot x - v_h(x)  &\leq p\cdot x - w_h(x) + \|v_h -  w_h\|_{L^\infty}\\
&=p'\cdot x-w_h(x)+(p-p')\cdot x+\|v_h -  w_h\|_{L^\infty}\\
&\leq p'\cdot a_i-w_h(a_i)+(p-p')\cdot x+\|v_h -  w_h\|_{L^\infty}\\
&\leq p\cdot a_i-w_h(a_i)-|p-p'||x-a_i|+\|v_h -  w_h\|_{L^\infty}\\
& < p\cdot a_i - v_h(a_i),
\end{split}
\]
which implies that 
\[
p\not\in \partial^-v_h(x)\,, 
\]
and hence 
  \begin{equation*}
\left\{p\in \partial^-w_h(a_i):\dist(p,\partial(\partial^-w_h(a_i)))>\frac{\|v_h -  w_h\|_{L^\infty}}{r}\right\}\subseteq \partial^- v_h(B(a_i,r))\,. 
\end{equation*}
% We estimate the term $\|v_h -  w_h\|_{L^\infty}$ using  Lemma \ref{lem:lifting_comparison}, where for definiteness we assume that $\beta=\frac{9}{10}$, and we absorb the logarithms into a lower exponent of $h$, which is possible under the assumption that $h_0$ is small enough. Thus we get
%   \begin{equation}\label{eq:14}
% \|v_h -  w_h\|_{L^\infty}\leq C h^{\gamma}\,.
% \end{equation}
From this inclusion  we now obtain using
\eqref{eq:12}, \eqref{eq:26} and Lemma \ref{lem:lifting_comparison}, 
\[
  \begin{split}
\mu_{w_h}(\{a_i\})&=\L^2(\partial^-w_h(a_i))\\
&\leq \L^2(\partial^- v_h(B(a_i,r))+C\|v_h-w_h\|_{L^\infty}\\
&\leq \sigma_i+C h^{2/3}\left(\log \frac{1}{h}\right)^{1/6}+C h^{\gamma}\,.
\end{split}
\]
% In a similar way, take $p\in \R^2$ with $\dist(p,\partial^-w_h(a_i)) > \frac{\|v_h-w_h\|}{r}$.  Then for every $p'\in \overline{B\left(p,\frac{\|v_h-w_h\|_{L^\infty}}{r}\right)}$ there exists $x\in \partial \Omega \cup \{a_1,\ldots,a_N\} \setminus \{a_i\}$ with $p'\cdot x - w_h(x)>p'\cdot a_i - w_h(a_i)$ and hence
% \[
%     \begin{split}
% p\cdot x - v_h(x)  &\geq p\cdot x - w_h(x)\\
% &=  p'\cdot x - w_h(x)+ (p-p')\cdot x\\
% &>  p'\cdot a_i - w_h(a_i)+(p-p')\cdot x\\
% &=  p\cdot a_i - w_h(a_i)+(p-p')\cdot (x-a_i)\,.
% \end{split}
% \]
% Now choose $p'\in \partial B\left(p,\frac{\|v_h-w_h\|_{L^\infty}}{r}\right)$ such that $(p-p')\cdot (x-a_i)=|p-p'||x-a_i|$, to obtain 
% \[
%   \begin{split}
% p\cdot x - v_h(x) &> p\cdot a_i-w_h(a_i)+\|v_h-w_h\|_{L^\infty}\\
% &\geq p\cdot a_i-v_h(a_i)
% \end{split}
% \]
% whence $p\notin \partial^-v_h(a_i)$, and thus

Next we prove
  \begin{equation}\label{eq:8}
    \partial^-v_h(B(a_i,r))\subseteq \left\{p:\dist(p,\partial^-w_h(a_i))<\frac{4\|v_h-w_h\|_{L^\infty}}{r}\right\}\,.
  \end{equation}
To show \eqref{eq:8}, observe that $p\in \partial^-w_h(a_i)$ if and only if there exists an affine function $A$ with $\nabla A=p$, $w_h\geq A$ on $B(a_i,r)$ and  $w_h=A$ on at least one point in $B(a_i,r)$.
% which by the one-homogeneity of the function $x\mapsto w_h(a_i+x)-w_h(a_i)$ on $B(a_i,r)$ implies 
% \[
% \partial^-w_h(a_i)=\left\{\frac{
For any $x_0\in B(a,r_i)$ and $p\in \partial^-v_h(x_0)$, we have 
\[
v_h(x_0)+p\cdot (x-x_0)\leq v_h(x)\leq w_h(x) \quad\text{ for } x\in  B(a_i,2r)\,. 
\]
Assume $v_h(x_0)+p\cdot (a_i-x_0)<w_h(a_i)-4\|v_h-w_h\|_{L^\infty}$. Then with $x_1=2x_0-a_i$ and the fact that $w_h$ is affine on the line segment $[a_1x_1]$, we get $v_h(x_1)\geq v_h(x_0)+p\cdot(x_1-x_0)\geq  w_h(x_1)+2\|v_h-w_h\|_{L^\infty}$, which is a contradiction. Hence
\[
   w_h(a_i)+p\cdot (x-a_i)\leq w_h(x)+4\|v_h-w_h\|_{L^\infty}\text{ for } x\in B(a_i,r)\,.
\]
Thus one may find $p'\in  B\left(p,\frac{4\|v_h-w_h\|_{L^\infty}}{r}\right)$ that 
satisfies 
\[
w_h(a_i)+p'\cdot (x-a_i)\leq w_h(x)\quad\text{ for } x\in  B(a_i,r)\,,
\]
which implies $p'\in \partial^-w_h(a_i)$ and hence \eqref{eq:8}.
From \eqref{eq:8} and  \eqref{eq:12}, we obtain
\[
  \begin{split}
\mu_{w_h}(\{a_i\})&=\L^2(\partial^-w_h(a_i))\\
&\geq \L^2(\partial^- v_h(B(a_i,r))-C\|v_h-w_h\|_{L^\infty}\\
&\geq \sigma_i-C h^{2/3}-C h^{\gamma}\,.
\end{split}
\]
Hence we have proved
\[
|\mu_{w_h}(\{a_i\})-\sigma_i|\leq C h^\gamma \quad \text{ for }i=1,\dots,N\,.
\]
By Lemma \ref{lem:stability}, we obtain $\|w_h-v_0\|_{L^\infty}\leq C h^\gamma$. Thus
\[
\|v_h-v_0\|_{L^\infty}\leq \|w_h-v_0\|_{L^\infty}+\|w_h-v_h\|_{L^\infty}\leq C h^\gamma\,.
\]
\end{proof}

\subsection{Estimates for nearly conical configurations}
\label{sec:estim-nearly-conic}
In this subsection we consider a closed convex set $E$ with $0\in E$, and $|x|\simeq 1$ for $x\in \partial E$, and 
a convex function $v\in C^0(\overline E)$ with $v=0$ on $\partial E$.
We are going to define a weighted integral of the Hessian determinant that can serve as a replacement for the energy $\|\mu_v-\delta_0\|_{W^{-2,2}}$,  
  \begin{equation}\label{eq:45}
    F(v)= \int_E|x|\d \mu_v\,.
  \end{equation}

\begin{proposition}
  \label{prop:conical_estimate}
  Let $E\subseteq \R^n$ be a convex open set with $0\in E$, with $1/R\leq |x|\leq R$ for $x\in \partial E$, and $U\subseteq\R^n$ open with $E\cc U$. Let $v\in C^0(U)$ be convex with $v=0$ on $\partial E$ and $\mu_v(\partial E)=0$, and $\hat v:E\to \R$ defined by 
\[{\hat v} = L_{\partial E \cup \{0\}}v\,.
\]
Then
  \begin{equation}\label{eq:24}
{\hat v}(x)- v(x)\leq  C(R,n) \min(|x|^{(n-1)/n} F( v)^{1/n},|v(0)||x|)\quad\text{ for all }x\in E\,.
\end{equation}
\end{proposition}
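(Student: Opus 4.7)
My plan is to establish the two bounds in \eqref{eq:24} separately.

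For the second bound $\hat v(x) - v(x) \leq C(R,n)|v(0)||x|$, I will proceed by elementary convexity. Writing $x = ty_\partial$ for $y_\partial$ the boundary point of $E$ on the ray through $0$ and $x$, the conical structure of $\hat v$ yields $\hat v(x) = (1-t)v(0)$, so that $\hat v(x) - v(0) = t|v(0)| \leq C(R)|v(0)||x|$. For any $p \in \partial^- v(0)$, convexity gives $v(0) - v(x) \leq |p||x|$, and the subgradient bound $|p| \leq C(R)|v(0)|$ follows from $v \leq 0$ on $E$, $v=0$ on $\partial E$, and $\dist(0,\partial E) \geq 1/R$ (otherwise the tangent affine function $y \mapsto v(0) + p\cdot y$ would strictly exceed $v = 0$ at some boundary point). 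Adding the two contributions gives the bound.

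For the main estimate $\hat v(x) - v(x) \leq C(R,n)|x|^{(n-1)/n}F(v)^{1/n}$, I plan to apply the Alexandrov maximum principle (Lemma~\ref{lemma: Alexandrov}) on a convex set $U$ containing $x$ and well-separated from $0$. Taking $U = B(x, c|x|)$ for a small $c \in (0,1)$, one has $|y| \geq (1-c)|x|$ on $U$, so the weighted bound $\mu_v(U) \leq F(v)/((1-c)|x|)$ holds, and Alexandrov yields
\[
L_{\partial U} v(x) - v(x) \leq C(n)\, c|x|\, \mu_v(U)^{1/n} \leq C(R,n)\, |x|^{(n-1)/n} F(v)^{1/n}.
\]
It then remains to show that $L_{\partial U}v(x)$ agrees with $\hat v(x)$ up to an error of the same order. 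I would do this by taking the affine function $a$ tangent to $\hat v$ at $x$---specifically the affine function agreeing with $\hat v$ along the ray through $0$ and $x$ (where $\hat v$ is itself affine), with perpendicular slope tuned so that $a \leq 0$ on $\partial E$---and shifting it downward by $\eta := \sup_{\partial U}(a-v)^+$ to obtain an admissible competitor $a' = a - \eta$ for the lifting $L_{\partial U} v$. This gives $L_{\partial U}v(x) \geq \hat v(x) - \eta$, which is useful provided $\eta$ is of the order $|x|^{(n-1)/n}F(v)^{1/n}$.

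The hardest part I anticipate is controlling $\eta$ sharply. A direct use of the already-proved second bound $\hat v(y) - v(y) \leq C|v(0)||y|$ on $\partial U$ yields only $\eta \leq C|v(0)||x|$, and summing with the Alexandrov estimate then produces the weaker bound $\delta \leq C|v(0)||x| + C|x|^{(n-1)/n}F(v)^{1/n}$ rather than the claimed $\min$. To close this gap I would instead work with the smaller sub-level set $A_\eta = \{y \in E : a(y)-v(y) > \eta\}$, which contains $x$ and whose distance from $0$ is $\gtrsim \eta/|v(0)|$ (again via the second bound applied pointwise near the origin), and apply Alexandrov to the convex function $v-a+\eta$ on $A_\eta$; combining with $\mu_v(A_\eta) \leq F(v)/\dist(A_\eta,0)$ and optimizing in $\eta$ should produce the sharp bound. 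If this optimization alone does not yield the correct exponent, I would invoke the generalized monotonicity Proposition~\ref{prop: comparison} with a concave test function weighting by a linear function of $|y|$ to transfer the $F(v)$-mass estimate from the interior to the pointwise gap at $x$.
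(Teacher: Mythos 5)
Your proof of the second (Lipschitz) bound $\hat v(x)-v(x)\leq C(R)|v(0)||x|$ is correct and matches the paper's one-line remark.

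For the main bound, your route is genuinely different from the paper's, which applies the change of variables $F(v)=\int_{\partial^- v(E)}|(\nabla v)^{-1}(p)|\,\d p$, picks the maximizer $x_0$ of $(\hat v-v)/|\cdot|^{(n-1)/n}$, and exhibits a ball $B(p_0,y_0/2|x_0|)\cap\partial^-\hat v(0)$ of measure $\gtrsim(y_0/|x_0|)^n$ on which the inverse gradient stays $\gtrsim|x_0|$, yielding directly $F(v)\gtrsim y_0^n/|x_0|^{n-1}$. Your Alexandrov-based route has a genuine gap that you correctly sense but do not close. The Alexandrov estimate controls $v$ relative to the \emph{local} lifting $L_{\partial U}v$, not relative to $\hat v$, and the bridge term $\eta=\sup_{\partial U}(a-v)^+$ is the problem. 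Your sub-level-set refinement gives $\delta-\eta\leq C\,\diam(A_\eta)\,\mu_v(A_\eta)^{1/n}$ with $\mu_v(A_\eta)\leq C|v(0)|F(v)/\eta$, but $\diam(A_\eta)$ can only be bounded by $\diam(E)\simeq R$ (not by something scaling like $|x|$), so optimizing over $\eta$ yields $\delta^{n+1}\lesssim|v(0)|F(v)$. This does \emph{not} imply the target $\delta^n\lesssim|x|^{n-1}F(v)$: dividing the first by $\delta$ would require $|v(0)|/\delta\lesssim|x|^{n-1}$, i.e.\ $\delta\gtrsim|v(0)|/|x|^{n-1}$, which is incompatible for small $|x|$ with the upper bound $\delta\lesssim|v(0)||x|$ you already proved. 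So for $|x|$ small the target is strictly stronger than what your optimization yields, and the estimate does not close. Your fallback suggestion of invoking Proposition~\ref{prop: comparison} with a concave linear-in-$|y|$ weight is not spelled out; the natural choices $\phi=C-c|y|$ applied to $L_{\partial E\cup\{x\}}v$ or to $\hat v$ relate $\phi(x)\mu_u(\{x\})$ to $C\mu_v(E)-cF(v)$, which does not obviously produce a lower bound on $F(v)$ in terms of $\hat v(x)-v(x)$ with the correct $|x|$-dependence. The missing idea is exactly the paper's: do not try to prove the estimate pointwise at each $x$; instead normalize by $|x|^{(n-1)/n}$, take the maximizer, and measure the subgradient set of $\hat v$ at the origin near $p_0\in\partial^-\hat v(x_0)$, where the weight $|(\nabla v)^{-1}(p)|\gtrsim|x_0|$ is controlled uniformly.
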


\begin{proof}
First we assume that the restriction of $v$ to $E$ is in $W^{2,\infty}$, $v\in W^{2,\infty}(E)$. We may write by a change of variables
\begin{equation}
  \begin{split}
  F(v) &=\int_E  |x| \det \nabla^2 v\d x \\
&= \int_{\partial^- v(E)}|(\nabla v)^{-1}(p)|\,\d p \\
&\geq \int_{\partial^-{\hat v}(0)}|(\nabla v)^{-1}(p)|\,\d p.
\end{split}
\end{equation}

In the last step we have used the monotonicity of the subdifferential (see \cite[Lemma 2.7]{figalli2017monge} or Proposition \ref{prop: comparison} with $\phi=1$). The point $(\nabla v)^{-1}(p)\in E$ can be written for almost every $p\in \partial^-v(E)$ as
\begin{equation}
  (\nabla v)^{-1}(p) = \argmax_{x\in E} p\cdot x - v(x).
\end{equation}

Let
\begin{equation}x_0 \in \argmax_{x\in E\setminus\{0\}}\frac{{\hat v}(x)-v(x)}{|x|^{(n-1)/n}}
\end{equation}
and $y_0 := {\hat v}(x_0)-v(x_0)$. (A maximizer exists since $v,{\hat v}$ are Lipschitz at $0$ and coincide on $\partial E$.) Let $p_0 \in \partial^-{\hat v}(x_0)$. We claim that
\begin{equation}\label{eq: inverse gradient norm}
|(\nabla v)^{-1}(p)| \geq \frac1{2}|x_0|\quad\text{ for all }p\in \partial^-{\hat v}(0)\cap B\left(p_0, \frac{y_0}{2|x_0|}\right).
\end{equation}
Since $\L^n(\partial^-{\hat v}(0)\cap B(p_0, r)) \geq cr^n$ for all $r\leq |v(0)|$, and $r = \frac{y_0}{2|x_0|}\leq C|v(0)|$ by the Lipschitz continuity of $v,{\hat v}$, we obtain
\begin{equation*}
  \begin{split}
F(v) &\geq \int_{\partial^-{\hat v}(0)\cap B(p_0, \frac{y_0}{2|x_0|})} |(\nabla v)^{-1}(p)|\,\d p \\
&\geq \frac{|x_0|}{2} c\left(\frac{y_0}{|x_0|}\right)^n \\
&= c \frac{y_0^n}{|x_0|^{n-1}} \\
&= c\max_{x\in E} \frac{({\hat v}(x)-v(x))^n}{|x|^{n-1}}.
\end{split}
\end{equation*}

Reordering yields
\begin{equation*}
{\hat v}(x) - v(x) \leq C|x|^{(n-1)/n}F(v)^{1/n}\quad \text{ for all }x\in E.
\end{equation*}
The inequality ${\hat v}(x) - v(x) \leq C|v(0)||x|$ follows from the uniform Lipschitz bounds near $0$.

Now to show \eqref{eq: inverse gradient norm}:

Let $p\in \partial^- {\hat v}(0)$. Then ${\hat v}(x) \geq p\cdot x$ for all $x\in E$ and for $|x| < \frac{|x_0|}{4}$ we have
\begin{equation}
p\cdot x - v(x) \leq {\hat v}(x) - v(x) \leq \frac{|x|^{1/2}}{|x_0|^{1/2}} y_0 < \frac{y_0}{2}.
\end{equation}

On the other hand for $|p-p_0|\leq \frac{y_0}{2|x_0|}$ we have since $p_0\cdot x_0 = {\hat v}(x_0)$
\begin{equation}
p\cdot x_0 - v(x_0) = p_0 \cdot x_0 - v(x_0) + (p-p_0)\cdot x_0 \geq y_0 - |p-p_0||x_0| \geq \frac{y_0}{2}.
\end{equation}
In particular $|(\nabla v)^{-1}(p)|\geq \frac{|x_0|}{4}$. This completes the proof for $v\in W^{2,\infty}(E)$. 

\medskip

In the general case,  consider the approximations $v_\e:=P_\e v|_U\in W^{2,\infty}(U)$ of $ v$  given obtained by applying the smoothing operation $P_\e$ from Lemma \ref{lem:convex_smoothing}. Let $E_\e:=\{x\in U:  v_\e<0\}$. Then $\mu_{ v_\e}\wsto \mu_{ v}$ weakly-* as measures on $\overline E$ and $\mathds{1}_{E_\e}\to \mathds{1}_{E}$ in $L^1(U)$. Using the fact $\mu_v(\partial E)=0$, 
we obtain 
\[
  \begin{split}
\int_{E_\e}|x|\d\mu_{v_\e}&\to  \int_E|x|\d\mu_{v}\\
\hat v_\e(x)-v_\e(x)&\to \hat v(x)-v(x) \quad \text{ for every }x\in E\,.
\end{split}
\]
This finishes the proof.
\end{proof}

\begin{remark}
\label{rem:show_optimal}
The scaling exponent $\frac{1}{n}$ with which the energy enters the right hand side in \eqref{eq:24} is optimal in the sense that one may construct a family of  examples that satisfies this scaling. To alleviate the notation, we assume  $n=2$, but  the general case can be treated in analogous fashion.

\medskip

Consider $E=[-1,1]^2$, and for $0<R<H<1$, define  the piecewise affine function
\[
  \begin{split}
v_{R,H}(x)
=\{\sup w(x): w\text{ convex },w|_{\partial E}=0, w(0)=-1, w(Re_1) = R-H\}\,.
\end{split}
\]
We observe that with $\hat v_{R,H}=L_{\partial E\cup\{0\}}v_{R,H}$ defined as in the proposition, we have that
 $\hat v_{R,H} = v_{0,0} = \max(|x_1|,|x_2|)$. We have that
\[
F(v_{R,H}) =\int |x|\d\mu_{v_{R,H}}(x)= R\L^2(\partial^-v_{R,H}(Re_1))\,,  
\]
and calculate
\[
\partial^- v_{R,H}(Re_1) = \conv\left(\left(1-\frac{H}{R},\frac{H}{R}\right),\left(1-\frac{H}{R},-\frac{H}{R}\right),\left(\frac{1+H}{1-R},0\right)\right)\,,
\]
so that
\[
F(v_{R,H}) \geq R \left(\frac{H}{R}\right)^2 = \frac{H^2}{R},  
\]
and of course
\[
\hat v_{R,H}(Re_1) - v_{R,H}(Re_1) = H \geq \sqrt{F(v_{R,H})R}.
\]
This proves  our claim.   
\end{remark}

\subsection{Quantitative behavior near ridges}
\label{sec:ridges}

Before demonstrating how to ``transfer the optimal estimates from the boundary to the bulk''  in Proposition \ref{prop: ridge} below,  which is the pivotal step in obtaining control over $v_h$ in the vicinity of the ridges, we establish some helpful notation in the 
forthcoming lemma. 
This lemma introduces  affine maps $\ell^k$ ($\ell_{ij}^k$ in the statement of the lemma). The maps $\tilde \ell^k$  mentioned  at the outset of the present  section are perturbations of the maps $\ell^k$, see \eqref{eq:34} below. The maps $\tilde\ell^k$ will act as auxiliary tools facilitating the utilization of Proposition \ref{prop:conical_estimate} in proximity to the vertices.

\begin{lemma}
\label{lem:ridgeaux}
Let $v_0$ be the solution of \eqref{eq: MAD}. Then the choice of points  
$b_{ij}^+,b_{ij}^-\in\Omega$ for $(i,j)\in\rid$ from Lemma \ref{lemma: 2d ridge} may be modified in  a way such that, additionally to the properties claimed in that lemma,  there exist  
affine functions $\ell_{ij}^i,\ell_{ij}^j$ for  $(i,j)\in \rid$  with the following properties (see Figure \ref{fig:singlevertexcut} for an illustration):
\begin{itemize}
\item[(i)] For every $(i,j)\in \rid$,
\begin{alignat*}{3}
v_0(a_k)&< \ell_{ij}^k(a_k) \quad&&\text {for }k\in\{i,j\}\\
v_0(a_l)&>\ell_{ij}^k(a_l) \quad&&\text{for } k\in\{i,j\},l\in \{1,\dots,N\}\setminus\{k\}\\
\ell_{ij}^k(b_{ij}^l)&=v_0(x) \quad &&\text{for } k\in \{i,j\}, l\in \{\pm\}\,.
\end{alignat*}

\item[(ii)] For every $(i,j)\in \rid$, with  
  \begin{equation}\label{eq:3}
K_{ij}:=\conv\left(R_{ij}\cup \{x:v_0(x)\leq\ell_{ij}^i\}\cup \{x:v_0(x)\leq\ell_{ij}^j\}\right)
\end{equation}
we have that  
\[
a_l\not\in K_{ij}\text{  for }l\not\in\{i,j\}\,.
\]

\end{itemize}
\end{lemma}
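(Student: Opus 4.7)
The plan is to adapt the rhombus from Lemma \ref{lemma: 2d ridge} by shrinking it around the ridge and then to select $\ell_{ij}^{i}$ and $\ell_{ij}^j$ from within the one-parameter family of affine maps interpolating $v_0$ at the two new rhombus vertices. Fixing $(i,j)\in\rid$, let $A^+$ and $A^-$ denote the affine functions agreeing with $v_0$ on the original triangles $[a_ib^+_{\mathrm{old}}a_j]$ and $[a_ib^-_{\mathrm{old}}a_j]$ and let $p^+,p^-$ be their gradients. By convexity both are global supporters of $v_0$, and $p^+-p^-$ is parallel to $e_{ij}^\perp$; after possibly swapping labels, $c:=\tfrac12(p^+-p^-)\cdot e_{ij}^\perp>0$. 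For a scale $s>0$ to be chosen small I set $b_{ij}^\pm:=m_{ij}\pm s\,e_{ij}^\perp$, where $m_{ij}:=(a_i+a_j)/2$; since $b_{ij}^\pm$ lie on $[m_{ij},b_{\mathrm{old}}^\pm]$, the smaller triangles $[a_ib_{ij}^\pm a_j]$ sit inside the original ones and $v_0$ remains affine on them.

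A direct solution of the two interpolation conditions $\ell(b_{ij}^\pm)=v_0(b_{ij}^\pm)$ yields the one-parameter family
\[
\ell_{\tilde\kappa}(x)=\tfrac12\bigl(A^+(x)+A^-(x)\bigr)+sc+\tilde\kappa\,(x-m_{ij})\cdot e_{ij},\qquad \tilde\kappa\in\R.
\]
Evaluation at the disclinations gives $\ell_{\tilde\kappa}(a_i)-v_0(a_i)=-\tilde\kappa L/2+sc$, $\ell_{\tilde\kappa}(a_j)-v_0(a_j)=\tilde\kappa L/2+sc$, and, for $l\neq i,j$, $\ell_{\tilde\kappa}(a_l)-v_0(a_l)=sc+\tilde\kappa u_l-\Delta_l$, where $L:=|a_j-a_i|$, $u_l:=(a_l-m_{ij})\cdot e_{ij}$, and $\Delta_l:=v_0(a_l)-\tfrac12(A^++A^-)(a_l)\geq 0$. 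I then set $\ell_{ij}^i:=\ell_{\tilde\kappa}$ with $\tilde\kappa:=-\gamma s/L$ for a constant $\gamma$ exceeding both $2c$ and $cL/\min\{u_l:u_l>0\}$; all margins then scale as $\pm O(s)$, and if $\Delta_l>0$ holds at every $l$ with $u_l\leq 0$, then for $s$ sufficiently small all inequalities in (i) are satisfied. The construction of $\ell_{ij}^j$ is symmetric, with $\tilde\kappa:=+\gamma s/L$.

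For (ii), the rhombus $R_{ij}$ lies in an $O(s)$-neighborhood of $[a_ia_j]$ by construction, while each sublevel set $\{v_0\leq\ell_{ij}^{i,j}\}$ is convex, has $O(s)$-margin at the respective disclination vertex, and contains no other disclination vertex by the sign conditions established above. The cone-like growth of $v_0$ near each disclination---a consequence of $\partial^-v_0(a_k)$ having positive $2$-dimensional Lebesgue measure---then forces these sublevel sets to have diameter $O(s)$ as well, so that $K_{ij}$ lies in an $O(s)$-neighborhood of $[a_ia_j]$ and misses every other disclination vertex for $s$ small.

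The main obstacle is the verification of the standing assumption $\Delta_l>0$ at every $l$ with $u_l\leq 0$. One checks by elementary convex analysis that $\Delta_l=0$ forces both $p^+,p^-\in\partial^-v_0(a_l)$, from which $A^\pm\equiv v_0$ on the convex hull of $[a_ib_{\mathrm{old}}^\pm a_j]\cup\{a_l\}$, and consequently $a_l$ is collinear with $a_i,a_j$. Lemma \ref{lemma: 2d ridge}(i) rules out an interior position of $a_l$ on $[a_ia_j]$; the extension beyond $a_j$ gives $u_l>0$ and is harmless, being handled by the choice of $\gamma$. The remaining configuration---$a_l$ on the extension beyond $a_i$ with $(l,i)$ a colinear ridge sharing the gradients $p^\pm$---is handled by running the same construction on the composite colinear ridge $[a_la_j]$ and recovering $\ell_{ij}^i$ as a restriction. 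This collinearity bookkeeping is the delicate step.
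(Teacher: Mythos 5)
Your construction is essentially the same as the paper's, up to a reparametrization: the paper fixes a slope perturbation $\alpha$ and derives the rhombus scale $b(\alpha)$, you fix the rhombus scale $s$ and tune the slope via $\tilde\kappa=-\gamma s/L$; both produce the same one-parameter family of supporting-plane perturbations $\bar p\mapsto \bar p\mp\delta\,e_{ij}$.

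The real issue is your treatment of the "delicate" case $\Delta_l=0$. You correctly observe that $\Delta_l=0$ forces $p^\pm\in\partial^-v_0(a_l)$ and hence $a_l$ collinear with $a_i,a_j$, but you then believe the collinear configurations are genuine possibilities requiring a ``composite ridge'' bookkeeping that you leave vague and that does not obviously close. In fact, $\Delta_l=0$ \emph{cannot occur} for $l\notin\{i,j\}$, and the argument that shows this is just a continuation of the deduction you started: if $a_l$ lies on the line through $a_i,a_j$ outside $[a_ia_j]$ with, say, $a_i$ between $a_l$ and $a_j$, then $p^\pm\in\partial^-v_0(a_l)$ implies $v_0=A^\pm$ on the enlarged triangles $[a_lb^\pm a_j]$; these two triangles cover a full punctured neighborhood of $a_i$, so $\partial^-v_0(a_i)=[p^-,p^+]$, which has zero Lebesgue measure, contradicting $\mu_{v_0}(\{a_i\})=\sigma_i>0$. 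The same reasoning rules out $a_l$ beyond $a_j$ and, incidentally, the interior position on $[a_ia_j]$ (which you attribute to Lemma \ref{lemma: 2d ridge}(i), but which again follows directly because $\partial^-v_0(a_l)$ would collapse to $[p^-,p^+]$). Equivalently: $(\partial^-v_0)^{-1}(\bar p)=[a_ia_j]$ exactly, so $\bar p\notin\partial^-v_0(a_l)$ for $l\notin\{i,j\}$, and the margin $\Delta_l$ is strictly positive. With this in hand, your choice of $\gamma$ works and (i) follows for $s$ small.

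There is also a quantitative slip in your argument for (ii): with $\gamma$ a fixed constant, the sublevel sets $\{v_0\le\ell^k_{ij}\}$ do \emph{not} have diameter $O(s)$; on the ridge line one computes $\ell^i_{ij}-v_0=s\bigl(c-\gamma x_1/L\bigr)>0$ for all $x_1<cL/\gamma$, so the sublevel set contains a subsegment of $[a_ia_j]$ of $O(1)$ length. What is true, and sufficient, is that $\ell^k_{ij}\to\ell_{\bar p,0}$ uniformly as $s\to0$, so the sublevel sets are eventually contained in any prescribed neighborhood of $\{v_0=\ell_{\bar p,0}\}=(\partial^-v_0)^{-1}(\bar p)=[a_ia_j]$; since $R_{ij}$ also collapses onto $[a_ia_j]$ and no other vertex lies on $[a_ia_j]$, $K_{ij}$ misses $\{a_l: l\notin\{i,j\}\}$ for $s$ small.
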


\begin{proof}
Fix $(i,j)\in\rid$.% ,  and choose $\rho>0$ such that for $l\in\{1,\dots,N\}\setminus\{i,j\}$, 
% \[
% a_l\not\in E_{ij}^\rho:=\left\{x:\dist(x,[a_ia_j])<\rho\right\}\,.
% \]
% The choice $\rho$ clearly only depends on $\Omega$ and $\overline\mu$. 
We initially assume that $b_{ij}^\pm$ are as in the statement of Lemma \ref{lemma: 2d ridge}. By an affine change of variables, we may assume $[a_ia_j]=[-L,L]\times\{0\}$, $b_{ij}^\pm=(0,\pm\beta)$ for some $L,\beta>0$ and 
\[
  \begin{split}
\nabla v_0&=p^+ \text{ on } [a_ib_{ij}^+a_j] \\
\nabla v_0&=p^- \text{ on } [a_ib_{ij}^-a_j] 
\end{split}
\]
with $p^+\neq p^-$. Clearly $\overline p:=\frac12(p^++p^-)\in \partial^-v_0(a_i)\cap \partial^-v_0(a_j)$. Furthermore, the affine function 
\[
\ell_{\overline p, d}(x)=v(a_i)+d+(x-a_i)\cdot\overline p=v(a_j)+(x-a_j)\cdot \overline p+d
\]
satisfies, for $d>0$ small enough, 
  \begin{alignat*}{3}
\ell_{\overline p, d}(a_l)&=v(a_l)\quad &&\text{ for } l\in \{i,j\}\,,\\
\ell_{\overline p, d}(a_l)&<v(a_l)\quad &&\text{ for } l\in\{1,\dots,N\}\setminus \{i,j\}\,.
\end{alignat*}
Without loss of generality, for $\alpha>0$ small enough, we may assume that
\[
  \begin{split}
p_i(\alpha)&:=\overline p+\alpha (p^+-p^-)^\bot\in \left(\partial^-v_0(a_i)\right)^\circ\,,\\
\text{ and }\quad p_j(\alpha)&:=\overline p-\alpha (p^+-p^-)^\bot\in \left(\partial^-v_0(a_j)\right)^\circ\,.
\end{split}
\]
Now define $d(\alpha)>0$ by requiring
that the affine functions 
\[
  \begin{split}
\ell_{p_i(\alpha),d(\alpha)}&:=v_0(a_i)+d(\alpha)+p_i(\alpha)\cdot (x-a_i)\\
\ell_{p_j(\alpha),d(\alpha)}&:=v_0(a_j)+d(\alpha)+p_j(\alpha)\cdot (x-a_j)
\end{split}
\]
satisfy 
\[
  \begin{split}
\ell_{p_i(\alpha), d(\alpha)}\left((+ L/2,0)\right)&=v_0\left((+ L/2,0)\right)\\
\ell_{p_i(\alpha), d(\alpha)}\left((- L/2,0)\right)&=v_0\left((- L/2,0)\right)\,.
\end{split}
\]
(Note that $(L/2,0)=\frac34 a_j+\frac14 a_i$, $(-L/2,0)=\frac34 a_j+\frac14 a_i$.)
For $\alpha$ small enough and $k\in \{i,j\}$, this implies  
    \begin{alignat*}{3}
\ell_{p_k(\alpha), d(\alpha)}(a_k)&>v_0(a_k)\,, &&\\
\ell_{p_k(\alpha), d(\alpha)}(a_l)&<v_0(a_l)\quad &&\text{ for } l\in \{1,\dots,N\}\setminus\{k\}\,.
\end{alignat*}
Moreover, it implies that there exists $b(\alpha)>0$, $b$ continuous with $\lim_{\alpha\to 0}b(\alpha)=0$, such that 
\[
\ell_{p_i(\alpha), d(\alpha)}\left((0,\pm b(\alpha))\right)=v_0\left((0,\pm b(\alpha))\right)=\ell_{p_j(\alpha), d(\alpha)}\left((0,\pm b(\alpha))\right)\,.
\]
Choosing some small positive $\alpha$ and replacing $b_{ij}^\pm$ by $(0,\pm b(\alpha))$ completes the construction, setting $\ell_{ij}^k=\ell_{p_k(\alpha), d(\alpha)}$ for $k\in\{i,j\}$.
\end{proof}

In the statement of the following proposition, we assume that $\Omega\subseteq\R^2$ and $\overline\mu=\sum_{i=1}^N\sigma_i\delta_{a_i}$ are as in Theorem \ref{thm:main}, $v_0$ is the solution of \eqref{eq: MAD}, and $(i,j)\in\rid$. Define $U_{ij}\subseteq \Omega$ to be an open convex neighborhood of $K_{ij}$ defined in \eqref{eq:3} such that 
\[
a_l\not\in \overline{U_{ij}} \text{ for }l\in \{1,\dots,N\}\setminus\{i,j\}\,.
\]

\begin{proposition}\label{prop: ridge}
 There exist  constants $\eps_0, C>0$  such that for  every convex $v\in C^0(U_{ij})$ with $\|v-v_0\|_{L^\infty(U_{ij})}\leq \eps_0$, 
  \begin{equation}
    |v(x) - L_{\{a_j, b_{ij}^+,a_i, b_{ij}^-\}}v(x)| \leq C\sqrt{F(v)}\sqrt{F(v) + \dist(x,[a_ia_j])} \quad\text{ for } x\in R_{ij}\,,
  \end{equation}
   where $F(v) = \int_{U_{ij}}\dist(\cdot,\{a_i,a_j\})\d \mu_v$.
\end{proposition}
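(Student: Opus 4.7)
My plan is to combine Proposition \ref{prop:conical_estimate}, applied separately at the two endpoints of the ridge, with the generalized monotonicity principle of Proposition \ref{prop: comparison} to translate boundary-type estimates into interior estimates on $R_{ij}$. The argument proceeds in three stages.

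\textbf{Stage 1 (Auxiliary sublevel sets at each vertex).} First I would use the smallness hypothesis $\|v-v_0\|_{L^\infty(U_{ij})}\le \eps_0$ and the strict separation properties of the affine maps $\ell_{ij}^k$ from Lemma \ref{lem:ridgeaux}(i) to produce, for $k\in\{i,j\}$, slightly perturbed affine functions $\tilde \ell^k$ with the following properties: (a) the sublevel set $E^k:=\{x\in U_{ij}:v(x)< \tilde\ell^k(x)\}$ is a convex open neighborhood of $a_k$ whose closure contains no other vertex $a_l$; (b) the two points $b_{ij}^+$ and $b_{ij}^-$ lie on $\partial E^k$ for both $k=i$ and $k=j$; and (c) $v=\tilde\ell^k$ on $\partial E^k$. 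The existence of such perturbations follows from continuity of $v$ together with Lemma \ref{lem:ridgeaux}(i)--(ii) for $\eps_0$ small.

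\textbf{Stage 2 (Conical estimate near each vertex).} On $E^k$, I apply Proposition \ref{prop:conical_estimate} to the convex function $v-\tilde\ell^k$, which vanishes on $\partial E^k$ and has the same Monge-Amp\`ere measure as $v$. Setting $\hat v^k:=L_{\partial E^k\cup\{a_k\}}v$, this yields
\[
\hat v^k(x)-v(x)\;\le\; C\,|x-a_k|^{1/2}\!\left(\int_{E^k}|y-a_k|\,\d\mu_v(y)\right)^{1/2}\;\le\; C\,|x-a_k|^{1/2}F(v)^{1/2}
\]
for $x\in E^k$, where I use that $|y-a_k|\lesssim \dist(y,\{a_i,a_j\})$ on $E^k$ because $E^k$ is separated from the other vertex. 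Because $\partial E^k$ is the boundary of a convex set on which $v$ equals an affine function with $v(a_k)$ strictly below that affine extrapolation at the apex, $\hat v^k$ has a cone-type structure along rays from $a_k$, and in particular on the segments $[a_k b_{ij}^\pm]\subset\overline{E^k}$. Combining the estimates for $k=i$ and $k=j$ at the point $b_{ij}^\pm$ shared by $\partial E^i$ and $\partial E^j$ yields pointwise control of $v$ of the form $|v(x)-L_K v(x)|\lesssim \sqrt{F(v)}\sqrt{F(v)+\dist(x,[a_ia_j])}$ \emph{on $\partial R_{ij}$}, with $K=\{a_i,b_{ij}^+,a_j,b_{ij}^-\}$, after noting that every boundary edge of $R_{ij}$ lies in $\overline{E^i}\cup\overline{E^j}$.

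\textbf{Stage 3 (Transfer to the interior via generalized monotonicity).} Writing $w:=L_Kv$, I have $v\le w$ on $R_{ij}$ by convexity, and the previous stage bounds $w-v$ on $\partial R_{ij}$. To propagate this bound into the bulk I work on each of the triangles $T^\pm:=[a_i b_{ij}^\pm a_j]$ separately. On $T^+$, $w$ is affine, hence $\mu_w(T^+)=0$, and I introduce an auxiliary convex function $\tilde v\in C^0(\overline{T^+})$ that agrees with $v$ on $\partial T^+$, is bounded above by $w$, and satisfies $\mu_{\tilde v}(T^+)\le \mu_v(T^+)+\H^1$-error from the edges. Applying Proposition \ref{prop: comparison} on $T^+$ with a concave test function $\phi(x)\propto\dist(x,\partial T^+)+F(v)$ (suitably normalized so $\phi\ge 0$) converts the Monge-Amp\`ere mass estimate $F(v)=\int\dist(\cdot,\{a_i,a_j\})\,\d\mu_v$ into a pointwise bound $w-v\le C\sqrt{F(v)}\sqrt{F(v)+\dist(x,[a_ia_j])}$ via an Alexandrov-type maximum argument (in the spirit of the proof of Lemma \ref{lemma: Alexandrov} and Corollary \ref{cor:ABPgen}, but with the concave weighting).

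\textbf{Expected main obstacle.} The genuinely delicate step is Stage 3, because the direct comparison between $v$ and $w$ fails: $w\neq v$ on $\partial R_{ij}$, so Proposition \ref{prop: comparison} cannot be invoked with these two functions. One must therefore either construct a convex interpolant $\tilde v$ that agrees with $w$ on $\partial R_{ij}$ without increasing the Monge-Amp\`ere mass beyond $F(v)$, or else adapt the change-of-variables argument from the proof of Proposition \ref{prop:conical_estimate} to the two-vertex geometry, replacing $|x|$ by the distance to the ridge $[a_ia_j]$. I expect the correct $\sqrt{F}\sqrt{F+\dist(x,[a_ia_j])}$ scaling to drop out from balancing the concave test function $\phi$ between the two regimes $\dist(x,[a_ia_j])\lesssim F(v)$ and $\dist(x,[a_ia_j])\gtrsim F(v)$, analogously to the optimal construction sketched in Remark \ref{rem:show_optimal}.
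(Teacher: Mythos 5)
Your Stages 1 and 2 match the paper's Steps 1--2 closely: perturb $\ell_{ij}^k$ to $\tilde\ell^k$ so that $b_{ij}^\pm\in\partial E^k$, invoke Proposition~\ref{prop:conical_estimate} in each $E^k$, and thereby obtain the optimal estimate $|v-w|\lesssim\sqrt{F(v)}\sqrt{|x_2|}$ on $\partial R_{ij}$, where $w=L_{\{a_i,a_j,b^+_{ij},b^-_{ij}\}}v$. You also correctly flag the real obstacle: Proposition~\ref{prop: comparison} cannot be applied to $v$ and $w$ directly since they do not agree on $\partial R_{ij}$.

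However, the fixes you sketch for Stage~3 contain genuine gaps. Working per-triangle with an interpolant $\tilde v$ that matches $v$ on $\partial T^+$ and is bounded above by $w$ does not yield a usable monotonicity inequality: applying Proposition~\ref{prop: comparison} to the pair $\tilde v\leq w$ with $\mu_w\equiv 0$ on $T^+$ gives only $0\leq\int\phi\,\d\mu_{\tilde v}$, which is vacuous; and your proposed test function $\phi\propto\dist(\cdot,\partial T^+)+F(v)$ vanishes along the whole ridge $[a_ia_j]$, so it is not comparable to the weight $\dist(\cdot,\{a_i,a_j\})$ that defines $F(v)$, and the crucial estimate $\int\phi\,\d\mu_{\tilde v}\lesssim F(v)$ cannot be justified. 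The paper's actual device is a \emph{lower barrier}: set $w_0(x)=w(x)-C\sqrt{F(v)}\sqrt{\alpha F(v)+|x_2|}$, which for a suitable $\alpha$ depending on $v_0$ is convex and satisfies $w_0\leq v\leq w$ on $\partial R_{ij}$. One then applies Proposition~\ref{prop: comparison} to $v$ and $L_{\partial R_{ij}\cup\{x\}}v$ — which \emph{do} coincide on $\partial R_{ij}$ — with the concave quadratic weight $\phi(x)=((x-a_i)\cdot e_1)((a_j-x)\cdot e_1)\simeq\dist(x,\{a_i,a_j\})$, obtaining $\phi(x)\L^2(\partial^-L_{\partial R_{ij}\cup\{x\}}v(x))\lesssim F(v)$. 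Finally, when $v(x)<w_0(x)$, the paper constructs an explicit triangle $[p_0p_1p_2]\subseteq\partial^-L_{\partial R_{ij}\cup\{x\}}v(x)$ — starting from $p_0\in\partial^-w_0(x)$ and perturbing in the $e_1$ and $e_2$ directions by amounts proportional to $w_0(x)-v(x)$ — whose area is bounded below by $\min\bigl(\tfrac{(w_0(x)-v(x))^2}{\dist(x,\{a_i,a_j\})|x_2|},\tfrac{w_0(x)-v(x)}{\dist(x,\{a_i,a_j\})}\bigr)$. Combining the two bounds and unwinding the definition of $w_0$ gives the claim. The barrier construction and the explicit subgradient triangle are the ideas your proposal is missing; your second suggestion (adapt the change-of-variables argument of Proposition~\ref{prop:conical_estimate} to the two-vertex geometry) points in roughly the right direction but is left unexecuted.
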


\begin{proof}
Without loss of generality we assume that $a_j = -a_i  = L e_1$, $L > 0$. Let $\ell_{ij}^i,\ell_{ij}^j$ be  the affine functions  introduced in Lemma \ref{lem:ridgeaux}. We will % fix the ridge $(i,j)\in\rid$ and 
write $b^\pm\equiv b_{ij}^\pm$, $U\equiv U_{ij}$, $R\equiv R_{ij}$, $\ell^k\equiv \ell_{ij}^k$.

\emph{Step 1: Conical structure of $v_0$.}

For $k\in\{i,j\}$, let 
\[
E_0^k := \{x:v_0(x)\leq \ell^k(x)\}\,.
\]
By the properties of $\ell^i,\ell^j$ stated in Lemma \ref{lem:ridgeaux} we have that 
\begin{equation}\label{eq: E^0}
  \begin{aligned}
  a_k\in E_0^k, \quad  \dist(a_k,\partial E_0^k) \gtrsim 1,\quad \dist(a_k, E_0^l) \gtrsim 1\\
    \dist(E_0^k, \partial U) \gtrsim 1, \quad\dist (\partial E_0^i\cap\partial E_0^j,[a_ia_j])\gtrsim 1,\quad\L^2(E_0^i,E_0^j) \gtrsim 1.
  \end{aligned}
\end{equation}  The implicit constants in the above  inequalities depend only on $\Omega,U$ and $\overline\mu$.

% \begin{figure}
%   \centering
%   \includegraphics[height=5cm]{cut.png}
% \caption{A visualization of  $v_0$ and $\ell_\pm$.\label{fig:cut}}
% \end{figure}

% \begin{figure}
%   \centering
%   \includegraphics[height=5cm]{cutlevel.pdf}
% \caption{The level sets $\partial E_{\pm}$ and the quadrilateral $Q$.\label{fig:cut2}}
% \end{figure}

\emph{Step 2: Near-conical structure of $v$.}

For $k\in\{i,j\}$, define the affine functions $\tilde \ell^i,\tilde \ell^j$ by requiring
  \begin{equation}\label{eq:34}
  \begin{split}
    \tilde \ell^k(b^\pm)&=v(b^\pm)\\
    \tilde \ell^k(a_k)&=\ell^k(a_k)
  \end{split}
\end{equation}
for $k\in \{i,j\}$. Note that this choice implies $\|\tilde \ell^k-\ell^k\|_{L^\infty}\leq C\e_0$. Hence, defining 
convex sets $E^k \subseteq \Omega$ through 
\[
E^k:= \{x\in \Omega\,:\,v(x)\leq \tilde \ell^k(x)\}\,,
\] 
we obtain that 
 \eqref{eq: E^0} also holds with $E_0^k$ replaced by $E^k$ provided that $\e_0$ is small enough, the choice depending only on $v_0$, and hence on $\Omega,\overline\mu$.
We now apply Proposition \ref{prop:conical_estimate} to $v-\tilde \ell^k$ in the two sets $E^k$, $k\in\{i,j\}$, where for the application of that proposition we translate $a_k$ into the origin. This yields two convex functions  $\hat g_k:E^k\to\R$ with $\hat g_k=0$ on $\partial E^k$ and $\hat g_k(a_k)=v(a_k)-\tilde \ell^k(a_k)$, $k\in \{i,j\}$. We set 
\[
\hat v_{k} := \hat g_{k}+\tilde \ell^{k}\,.
\]
By Proposition \ref{prop:conical_estimate} we have 
\begin{equation*}
  \begin{split}
|v(x)-\hat v_{k}(x)|&=|(v-\tilde\ell^k)-\hat g_k|\\
& \leq C \sqrt{ \int_{E^k} |x'-a_{k}|\d\mu_{v}(x')}  \sqrt{|x-a_{k}|}\quad\text{ for }x\in E^{k}\,.
\end{split}
\end{equation*}
Using $|\cdot-a_{k}|\leq C\dist(\cdot,\{a_{i},a_j\})$ on $E^{k}$, we deduce 
\begin{equation}\label{eq: conical}
|v(x)-\hat v_{k}(x)| \leq C \sqrt{F(v)}  \sqrt{|x-a_{k}|}\quad\text{ for }x\in E^{k}\,.
\end{equation}
We define  the  function 
\[
w:=L_{\{a_j,b_+,a_i,b_-\}}v
\]
which is affine on $[a_j b_+a_i]$ and on $[a_j b_-a_i]$. For this choice of $w$, \eqref{eq: conical} implies 
\begin{equation}\label{eq: boundary conical}
|v(x)-w(x)|\leq C\sqrt{F(v)}\sqrt{|x_2|} \quad\text{ for all }x\in\partial R\,,
\end{equation}
since $w=\hat v_i$ and $|x-a_i|\lesssim |x_2|$ on $[a_ib_{ij}^+]\cup[a_ib_{ij}^-]$ with analogous estimates with the roles of $i$ and $j$ reversed. 

\emph{Step 3: Boundary comparison function.}
We will extend a weaker version of \eqref{eq: boundary conical} to the interior of $R$, losing some control near the ridge $[a_ia_j] \subseteq \{x_2=0\}$.
We define the comparison function
\begin{equation}\label{eq: w0}
w_0(x):= w(x) - C\sqrt{F(v)}\sqrt{\alpha F(v) + |x_2|},
\end{equation}
where $\alpha>0$ is chosen large enough so that $w_0:\R^2\to \R$ is convex with 
\[
|[\partial_2 w_0]|\geq |[\partial_2 w]|/2\quad\text{ on }\{x_2=0\}\,,
\]
where $[\partial_2 w_0]$ denotes the jump of  $\partial_2 w_0$ at $\{x:x_2=0\}$.
We note that such a choice of  $\alpha$ is possible and depends only  on $v_0$. We also note that by \eqref{eq: boundary conical}, 
\[w_0\leq v\leq w\text{  on }\partial R\,.
\]
Of course, $w_0(x_1,x_2)$ only depends on $x_2$. For later reference, we note the following inequality for arguments $x_2,y_2$ with opposite signs, that follows from the  convexity of $w_0$, 
  \begin{equation}\label{eq:13}
    \begin{split}
      w_0(t,y_2)-\lim_{x_2\downarrow 0} \left(w_0(t,x_2)+\partial_2 w_0(t,x_2)(y_2-x_2)\right)&=w_0(t,y_2)-w_0(t,0)-y_2\partial_2w_0(t,0+)\\
&\geq |[\partial_2 w_0]| |y_2|\quad\text{ for }y_2<0\,,
      % w_0(t,y_2)-\lim_{x_2\uparrow 0} w_0(t,x_2)=w_0(t,y_2)-w_0(t,0-)\geq |[\nabla w_0]| |y_2|\quad\text{ for }y_2>0\,.
    \end{split}
  \end{equation}
with an obvious analogue for $y_2>0$ and the limit $x_2\uparrow 0$.

\emph{Step 4: Upper bound for $w_0 - v$.}
We now show that for all $x=(x_1,x_2)\in R$, we have
\begin{equation}\label{eq: w0 - v}
  v(x) \geq w_0(x) - C\sqrt{F(v)}\sqrt{F(v)+|x_2|}.
\end{equation}

To do so, consider the non-negative concave quadratic test function
\[
\phi(x) := ((x-a_i)\cdot e_1)((a_j-x)\cdot e_1)\geq 0\quad \text{ for } x\in R\,,
\]
which is constant on all vertical lines and satisfies 
  \begin{equation}\label{eq:6}
    \frac{1}{C}\dist(x,\{a_i,a_j\})\leq \phi(x) \leq  C \dist(x,\{a_i,a_j\})\text{ on }R\,.
  \end{equation}
  
Next we will apply the monotonicity formula in Proposition \ref{prop: comparison} to the functions $v\leq L_{\partial R \cup\{x\}}v$ which coincide on $\partial R$.  The application of the proposition yields
\begin{equation}\label{eq: comparison applied}
   \phi(x)\L^2(\partial^- L_{\partial R \cup\{x\}}v(x))=\int\phi\d\mu_{L_{\partial R \cup\{x\}}v} \leq \int\phi\d \mu_v \lesssim F(v).
\end{equation}

If $v(x)\geq w_0(x)$, \eqref{eq: w0 - v} holds. Otherwise, we claim that $\partial^- L_{\partial R \cup\{x\}}v(x)$ has area bounded from below by
\begin{equation}\label{eq: central subgradient}
  \L^2(\partial^- L_{\partial R \cup\{x\}}v(x)) \gtrsim\min\left(\frac{(w_0(x)-v(x))^2}{\dist(x,\{a_i,a_j\})|x_2|}, \frac{w_0(x) - v(x)}{\dist(x,\{a_i,a_j\})}\right).
\end{equation}

To prove \eqref{eq: central subgradient}, take $p_0\in \partial^- w_0(x)$. 
Write 
\[
(-L,0)=a_i=a_-\,,\quad(+L,0)= a_j=a_+
\]
and set 
\[
p_1 = p_0 - \sgn(x_1)\frac{w_0(x)-v(x)}{|a_{\sgn(x_1)}-x|}e_1\,.
\]
We note 
\[
- \sgn(x_1)\frac{e_1\cdot (y-x)}{|a_{\sgn(x_1)}-x|}\leq 1 \quad\text{ for all }y\in \partial R\,.
\]
Thanks to this observation we obtain for every $y\in \partial R$ the following chain of estimates:
  \begin{equation}\label{eq:2}
    \begin{split}
      p_1\cdot y-v(y)&=p_0\cdot y-w_0(y)+(p_1-p_0)\cdot y+w_0(y)-v(y)\\
                     &\leq p_0\cdot x-w_0(x)+(p_1-p_0)\cdot y\\
                     &\leq p_1\cdot x-w_0(x)+(p_1-p_0)\cdot (y-x)\\
                     &=p_1\cdot x- w_0(x)-(w_0(x)-v(x))\frac{(y-x)\cdot e_1 \sgn(x_1)}{|a_{\sgn(x_1)}-x|}\\
                     &\leq p_1\cdot x-v(x)\,,
    \end{split}
  \end{equation}
where we have used $p_0\in \partial^-w_0(x)$ and $w_0(y)\leq v(y)$ for all $y\in\partial R$ to obtain the first inequality. The estimate \eqref{eq:2} implies $p_1\in \partial^- L_{\partial R \cup\{x\}}v(x)$.

In a similar way, we set  
\[
p_2 = p_0 - \sgn(x_2)\min\left(\frac{w_0(x)-v(x)}{|x_2|}, \frac12|[\partial_2 w_0]|\right)e_2\,.
\]
We estimate assuming $y\in \partial R$, this time keeping the non-positive term $r(x_2,y_2):=-w_0(y)+w_0(x)+(y-x)\cdot p_2$, whose analogue we discarded in the previous calculation: 
  \begin{equation}\label{eq:17}
    \begin{split}
      p_2\cdot y-v(y)&\leq p_2\cdot x-w_0(x)+(p_2-p_0)\cdot (y-x)+r(x_2,y_2)\\
                     &\leq p_2\cdot x-w_0(x)-\sgn(x_2)\min\left(\frac{w_0(x)-v(x)}{|x_2|}, \frac12|[\partial_2 w_0]|\right)e_2\cdot (y-x)\\
&\quad +r(x_2,y_2)\,.
    \end{split}
  \end{equation}
In order to derive from \eqref{eq:17}  the wished for estimate 
\begin{equation}\label{eq:16}
  p_2\cdot y-v(y)\leq p_2\cdot x-v(x)\,,
\end{equation}
 we will assume  that $x_2\geq 0$, the proof for $x_2\leq 0$ working out in an analogous fashion. We  make a case distinction: If $y_2\geq x_2$, then of course
\[-\min\left(\frac{w_0(x)-v(x)}{|x_2|}, \frac12|[\partial_2 w_0]|\right)e_2\cdot (y-x)\leq 0\,,\]
and \eqref{eq:16} follows from \eqref{eq:17} and $r(x_2,y_2)\leq 0$. If $0\leq y_2\leq x_2$ then 
\[
- \min\left(\frac{w_0(x)-v(x)}{|x_2|}, \frac12|[\partial_2 w_0]|\right)e_2\cdot (y-x)\leq w_0(x)-v(x)\,,
\]
and again \eqref{eq:16} follows from \eqref{eq:17} and $r(x_2,y_2)\leq 0$. Finally, for the case $y_2<0$ we note that by the convexity of $w_0$ and \eqref{eq:13},  
\[
r(x_2,y_2)\leq \lim_{s\downarrow 0} r(s,y_2)\equiv r(0+,y_2)\leq -|[\partial_2 w_0]||y_2|\,.
\]
Hence 
\[
  \begin{split}
    -\min\left(\frac{w_0(x)-v(x)}{|x_2|}, \frac12|[\partial_2 w_0]|\right)e_2\cdot (y-x)&\leq 
     w_0(x)-v(x)+\frac12|[\partial_2 w_0]||y_2|\\
    &\leq w_0(x)-v(x)-\frac12r(x_2,y_2)\,,
  \end{split}
\]
and again \eqref{eq:16} follows from \eqref{eq:17}. Hence we have proved
\[p_2\in \partial^-L_{\partial R \cup\{x\}}v(x)\,.\]

Summarizing, we have that 
$p_0,p_1,p_2\in \partial^-L_{\partial R \cup\{x\}}v(x)$
and hence, by the convexity of the subdifferential at a single point, also  
\[ 
\partial^-L_{\partial R \cup\{x\}}v(x) \supseteq [p_0p_1p_2]\,,
\] 
which  implies
\[
  \begin{aligned}
\L^2(  \partial^-L_{\partial R \cup\{x\}}v(x)) \geq &\frac{w_0(x)-v(x)}{|a_{\sgn(x_1)}-x|}\min\left(\frac{w_0(x)-v(x)}{|x_2|},\frac12|[\partial_2 w_0]|\right)\\
 \gtrsim& \min\left( \frac{(w_0(x) - v(x))^2}{\dist(x,\{a_i,a_j\})|x_2|}, \frac{w_0-v(x)}{\dist(x,\{a_i,a_j\})}\right),
  \end{aligned}
\]
i.e. \eqref{eq: central subgradient}. Combining \eqref{eq:6}, \eqref{eq: comparison applied} and \eqref{eq: central subgradient}  yields our claim \eqref{eq: w0 - v}.
Combining the latter  with \eqref{eq: w0} yields the statement of the proposition.
\end{proof}

\subsection{Proof of the lower bound in Theorem \ref{thm:main}}

\label{sec:lower_bound}

% \begin{proposition}
%   Let $v_0\in C( \overline \Omega)$ be the unique solution of  \eqref{eq: MAD} and  $(i,j)\in \rid$. 
%   Then whenever $v_h\in W^{2,2}(\Omega)$ is a sequence of convex functions satisfying $\|v_h - v_0\|_{L^\infty( \Omega)}\leq \eps_0$ for all $h>0$, we have
%   \begin{equation}
%     \liminf_{h\to 0} \frac{\|\mu_{v_h}- \overline \mu\|_{W^{-2,2}(\Omega)}^2 + h^2\|\nabla^2 v_h\|_{L^2(\Omega)}^2}{h^{4/3}\log(1/h)^{-2/3}}> 0.
%   \end{equation}
% \end{proposition}

\begin{proof}[Proof of the lower bound in Theorem \ref{thm:main}]
By the upper bound in Theorem \ref{thm:main} and Lemma \ref{lem:W-22L2equality} we may assume that 
\[
\|\mu_{v_h}- \overline \mu\|_{W^{-2,2}(\Omega)}^2+h^2\|\nabla^2 v_h\|\leq C h^{4/3}\,,
\]
where the constant on the right hand side only depends on $\Omega,\overline\mu$. Choose $(i,j)\in\rid$, and recall the definition of the rhombus $R_{ij}$ from Lemmata \ref{lemma: 2d ridge} and \ref{lem:ridgeaux}. 
By Lemma \ref{lem:ridgeaux}, 
\[
a_l\not\in R_{ij} \quad\text{ for }l\in \{1,\dots,N\}\setminus\{i,j\}\,.
\]
We may choose two open convex sets $U^1_{ij},U^2_{ij}$ such that $R_{ij}\cc U^1_{ij}\cc U^2_{ij}$ and 
\[
a_l\not\in \overline{U_{ij}^2} \quad\text{ for }l\in \{1,\dots,N\}\setminus\{i,j\}\,.
\]
We will write $U^1\equiv U^1_{ij}$, $U^2\equiv U^2_{ij}$. 
% such that  $R_{ij}\subseteq U_1\subseteq U^2$ and 
% \[
% \dist(a_k,U^2)\gtrsim 1\text{ for }k\in \{1,\dots,N\}\setminus\{i,j\}\,.
% \] 
% We may make our choice such that the constant implicit in this estimate only depends on $\{a_1,\dots,a_N\}$ (and hence on $\overline \mu$). 
We claim that 
\[
F(v_h):= \int_{U^1}\dist(x,\{a_1,\dots,a_N\}) \d\mu_{v_h}(x)
\]
satisfies
\begin{equation}\label{eq: claim F}
F(v_h) \leq C(\Omega,\overline \mu)h^{2/3}\left(\log\frac{1}{h}\right)^{1/2}\,.
\end{equation}
To see that \eqref{eq: claim F} holds, choose $r_i,r_j>0$ independently of $h$ such that
\[
\begin{split}
B(a_i,r_i), B(a_j,r_j)&\subseteq U^1\\
B(a_i,r_i)\cap B(a_j,r_j)&=\emptyset\,,
\end{split}
\]
for $k\in \{i,j\}$ choose $\psi_k\in C^\infty_c(B(a_k,r_k))$ such that $0\leq \psi_k\leq 1$ and $\psi_k=1$ on $B(a_k,r_k/2)$, and complete the set $\{\psi_i,\psi_j\}$ to a partition of unity $\{\psi_i,\psi_j,\varphi\}$ of $U^1$, where $\varphi\in C^\infty_c(U^2)$. Let $\eta\in C^\infty_c(\R^2)$ be a standard mollifier, and $\eta_\e(\cdot)=\frac{1}{\e^2}\eta(\cdot/\e)$ for $\e>0$. Now the $h$-dependent choice of test function we make to show \eqref{eq: claim F} is the following: We write $\e := h^{2/3}$ and set
\[
\phi_h := \left(|\cdot-a_i|*\eta_\e\right)\psi_i+\left(|\cdot-a_j|*\eta_{\e}\right)\psi_j+\varphi\,.
\]
A calculation yields
\begin{equation}
\begin{cases}
\phi_h(x) \simeq \dist(x,\{a_i,a_j\})+h^{2/3} &\text{ in }U^1\\
|\nabla \phi_h(x)|\leq C&\text{ in }U^2\\
|\nabla^2 \phi_h(x)|\leq \frac{C}{\dist(x,\{a_i,a_j\})+h^{2/3}}&\text{ in }U^2\,,
\end{cases}
\end{equation}
where all constants on the right hand side only depend on $U^1,U^2,\psi_i,\psi_j,\varphi$ (and may hence be said to depend on $\Omega,\overline\mu$).
% Such a test function is easily found, e.g. by mollfying $\dist^2(x,\{a_1,\dots,a_N\})$ and multiplying with a cutoff between $\tilde \Omega$ and $\partial \Omega$.
% Ideally, we would choose $\sqrt{\psi}$ as a test function. However, $\sqrt{\psi}$ is not in $W^{2,2}_0(\Omega)$. Instead, we choose $\phi_h(x) := \sqrt{h^{4/3} + \psi(x)}\eta(x)$, where $\eta\in C_c^\infty(\Omega)$ is again a cutoff between $\tilde \Omega$ and $\partial \Omega$, and estimate $|\nabla^2 \phi_h(x)|\leq \frac{C}{\sqrt{h^{4/3}+\dist^2(x,\{a_1,\dots,a_N\})}}$, so that $\|\phi_h\|_{W^{2,2}_0(\Omega)}\leq C\log(1/h)$.
Thus 
\[
\|\phi_h\|_{W^{2,2}_0(\Omega)}\leq C \left(\log\frac{1}{h}\right)^{1/2}
\]
and we obtain
\begin{equation}
  \begin{aligned}
F(v_h) \leq &C \int\phi_h\d \mu_{v_h}\\
& \leq \|\phi_h\|_{W^{2,2}_0(\Omega)}\|\mu_{v_h}-\overline \mu\|_{W^{-2,2}(\Omega)} +\int\phi_h\d \overline \mu \\
&\leq Ch^{2/3}\log(1/h)^{1/2}  + (\sigma_i + \sigma_j) h^{2/3}\\
& \leq Ch^{2/3}\log(1/h)^{1/2}\,,
  \end{aligned}
\end{equation}
proving \eqref{eq: claim F}.

We recall the notation $R_{ij}=[a_ib^+_{ij}a_jb^-_{ij}]$, and  assume without loss of generality that $a_i = -a_j = Le_1$. We set
\[
w_h := L_{\{a_i,a_j,b^+_{ij},b^-_{ij}\}}v_h\,.
\]
By Lemma \ref{lemma: 2d ridge}, $v_0$ is affine on $[a_ib^+_{ij}a_j]$ and on $[a_ib^-_{ij}a_j]$, with 
\[
  \begin{split}
\nabla v_0&=p^+\text{ on }[a_ib^+_{ij}a_j]\\
\nabla v_0&=p^-\text{ on }[a_ib^-_{ij}a_j]
\end{split}
\]
and $p^+\neq p^-$. 
By definition, there exist $\tilde p^+_h,\tilde p^-_h\in \R^2$ such that
\[
  \begin{split}
\nabla w_h&=\tilde p_h^+\text{ on }[a_ib^+_{ij}a_j]\\
\nabla w_h&=\tilde p_h^-\text{ on }[a_ib^-_{ij}a_j]
\end{split}
\]
and by Proposition \ref{prop:suboptimal}, we have
  \begin{equation}\label{eq:15}
|p^+-\tilde p^+_h|+|p^--\tilde p^-_h|\leq C h^\gamma
\end{equation}
for $h<h_0$. Now we may apply   
 Proposition \ref{prop: ridge} with   $U\equiv U^2$,  $\e_0\equiv C h_0^\gamma$. We note that the hypothesis $\|v-v_0\|_{L^\infty}\equiv\|v_h-v_0\|_{L^\infty}\leq C h^\gamma\leq\e_0$ is met by Proposition \ref{prop:suboptimal}, provided that $h_0$ is chosen small enough. Thus  we obtain the existence of a constant  $C_1>0$ (that only depends on  $\Omega,\overline \mu$) such that 
\begin{equation}
|v_h(x) - w_h(x)| \leq C_1\left( h^{2/3}\left(\log\frac{1}{h}\right)^{1/2} + \sqrt{h^{2/3}\left(\log\frac{1}{h}\right)^{1/2}|x_2|}\right)\quad\text{ for }x\in R_{ij}\,.
\end{equation}
Here we may  choose $C_1$ such that it additionally satisfies $C_1\geq \max(|p^+-p^-|,1)$.
In particular, setting
\[
 l(h) :=  \frac{16C_1^2h^{2/3}\left(\log\frac{1}{h}\right)^{1/2}}{|\tilde p_h^+-\tilde p_h^-|^2}\,,
\]
 we obtain,
\begin{equation}\label{eq: vh high}
  v_h(x_1,\pm l(h)) \geq w_h(x_1,\pm l(h)) -  C_1\left(1+\frac{4C_1}{|\tilde p_h^+-\tilde p_h^-|}\right)h^{2/3}\left(\log\frac{1}{h}\right)^{1/2}\,,
\end{equation}
where we have assumed that $(x_1,\pm l(h))\in R_{ij}$, which is  the case for $h$ small enough and $x_1\in [-L/2,L/2]$.
Moreover by convexity
\begin{equation}\label{eq: vh low}
  v_h(x_1,0) \leq w_h(x_1,0).
\end{equation}
Combining \eqref{eq: vh high} with \eqref{eq: vh low} and using the convexity of $v_h$, we obtain
\begin{equation}
  \begin{split}
\int_{-l(h)}^{l(h)}&\partial_2^2 v_h(x_1,x_2)\d x_2\\
 &= \partial_2 v_h(x_1,l(h)) - \partial_2 v_h(x_1,-l(h)) \\
&\geq \frac{v_h(x_1,l(h))-v_h(x_1,0)}{l(h)}+\frac{v_h(x_1,-l(h))-v_h(x_1,0)}{l(h)}\\
&\geq \frac{w_h(x_1,l(h))-w_h(x_1,0)}{l(h)}+\frac{w_h(x_1,-l(h))-w_h(x_1,0)}{l(h)}\\
&\qquad-\frac{C_1\left(1+\frac{4C_1}{|\tilde p_h^+-\tilde p_h^-|}\right)|\tilde p_h^+-\tilde p^-_h|^2}{8 C_1^2}\\
&\geq  |\tilde p_h^+-\tilde p_h^-|-\frac{|\tilde p_h^+-\tilde p_h^-|^2}{8C_1}-\frac{|\tilde p_h^+-\tilde p_h^-|}{2}\\
&\geq \frac14 |\tilde p_h^+-\tilde p_h^-|\,.
%&\geq \frac{|\tilde p_h^+-\tilde p_h^-|
\end{split}
\end{equation}
Using  the Cauchy-Schwarz inequality, we deduce
\begin{equation}
  \begin{split}
h^2 \int_{-L/2}^{L/2} \int_{-l(h)}^{l(h)}|\partial_2^2 v_h(x_1,x_2)|^2\,\d x_2\d x_1 &\geq h^2L\frac{|\tilde p_h^+-\tilde p_h^-|^2}{8l(h)}\\
& \geq C h^{4/3} \log(1/h)^{-1/2},
\end{split}
\end{equation}
with $C>0$ depending only on  $\Omega$ and $\overline\mu$, where in the last inequality, we have used \eqref{eq:15}.
\end{proof}

\appendix

         \section{Scaling without boundary conditions: $h^2\log \frac1h$}

\label{sec:counterexample}         
         We observe that in fact the  scaling  for $M$ disclinations without any further constraints such as boundary conditions will \emph{not} be $h^{4/3}$ but $h^2\log \frac1h$, like a single disclination. 
We claim the following:
\begin{remark}
\label{rem:nobc}
Let $\Omega\subseteq\R^2$ and $\overline \mu=\sum_{i=1}^N\sigma_i\delta_{a_i}\in\M_+(\Omega)$. Then there exist  constants $C,h_0>0$ that only depend on $\Omega$ and $\overline\mu$  such that for every $h<h_0$, there exist $(\bu_h,v_h)\in W^{1,2}(\Omega;\R^2)\times W^{2,2}(\Omega)$ with 
\[
E_h(\bu_h,v_h)\leq C h^2\log \frac{1}{h}\,.
\]
\end{remark}

In the proof of the remark, we are going to use the following lemma:

\begin{lemma}
\label{lem:nobcaux}
Let  $\e\in (0,1)$ and $u_\e :\R^2\to \R$ the function
\[
u_\e(x_1,x_2):=\begin{cases}
|x_2|&\text{ if }|x_2|\geq \e |x_1|\\
\frac{\e |x_1|}{2} + \frac{x_2^2}{2\e |x_1|}&\text{ if }|x_2|<\e |x_1|
\end{cases}.
\]

Then $u_\e\in W^{2,\infty}_\loc(\R^2\setminus\{0\})$ is convex and one-homogeneous, with $\mu_{u_\e} = \frac{4\e}{3}\delta_0$.

In addition, $u_\e$ is linear in the two cones $\{x\in\R^2\,:\,x_2>\e |x_1|\}$ and $\{x\in \R^2\,:\,x_2<-\e|x_1|\}$.
\end{lemma}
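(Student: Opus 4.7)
The plan is to identify $u_\e$ as the support function of an explicit convex compact set, from which convexity, one-homogeneity, regularity, and the Monge-Amp\`ere computation all follow from a single observation.

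First I would introduce the convex set bounded by two parabolic arcs,
\[
K := \left\{(p_1,p_2)\in\R^2 : |p_2|\leq 1,\ |p_1|\leq \tfrac{\e}{2}(1-p_2^2)\right\},
\]
and claim that $u_\e(x)=\max_{p\in K}p\cdot x$ for every $x\in\R^2$. This identification would be verified by a short case analysis: parametrize the right boundary arc of $K$ by $p_2=t\in[-1,1]$, $p_1=\tfrac{\e}{2}(1-t^2)$, and maximize $p\cdot x$ for $x_1>0$. The interior critical point $t^\ast=x_2/(\e x_1)$ is admissible precisely when $|x_2|\leq \e x_1$ and substitution reproduces the interior formula $\tfrac{\e x_1}{2}+\tfrac{x_2^2}{2\e x_1}$; in the complementary regime the maximum is attained at the corner $t=\pm 1$ and equals $|x_2|$. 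The case $x_1<0$ is symmetric using the left arc.

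Once this representation is established, convexity and one-homogeneity of $u_\e$ are automatic as a supremum of linear functions, and the linearity of $u_\e$ on the outer cones $\{x_2>\e|x_1|\}$ and $\{x_2<-\e|x_1|\}$ is visible directly from the piecewise definition. For regularity I would compute the gradient in each smooth region and verify that it matches across the interfaces $|x_2|=\e|x_1|$, so that $u_\e\in C^1(\R^2\setminus\{0\})$. Inside the interior cone, the Hessian entries are of order $1/|x_1|$ and are therefore locally bounded away from the origin (yielding $u_\e\in W^{2,\infty}_\loc(\R^2\setminus\{0\})$), while a direct computation shows that the determinant of the Hessian vanishes identically; in the outer cones the Hessian is zero.

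For the Monge-Amp\`ere measure, the key point is that since $u_\e$ is $C^1$ and one-homogeneous on $\R^2\setminus\{0\}$, $\nabla u_\e$ is constant along rays, so the image of the gradient map on $\R^2\setminus\{0\}$ is a one-dimensional curve of zero two-dimensional Lebesgue measure. Consequently $\mu_{u_\e}$ is supported at the origin. From the support-function representation one reads off $\partial^- u_\e(0)=K$, so
\[
\mu_{u_\e}(\{0\}) = \L^2(K) = \int_{-1}^{1}\e(1-p_2^2)\,\d p_2 = \frac{4\e}{3}.
\]
The main obstacle I anticipate is a clean verification of the support-function representation $u_\e=\max_{p\in K}p\cdot x$; once this is in hand every other assertion falls out, but the case analysis on the sign of $x_1$ and on whether the optimum lies in the interior of the parametrizing interval or at a corner must be carried out carefully.
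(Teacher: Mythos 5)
Your proof is correct and takes essentially the same route as the paper: both identify $u_\e$ as the support function of the set $K=\conv\{\nabla u_\e(y):y\neq 0\}$ (the paper writes this as $u(x)=\max_{y\neq 0}x\cdot\nabla u(y)$), deduce convexity, one-homogeneity, and $W^{2,\infty}_\loc$ regularity from the explicit gradient, and compute $\mu_{u_\e}(\{0\})=\L^2(K)=\tfrac{4\e}{3}$. The only cosmetic difference is that you posit $K$ first and verify the support-function identity by optimizing over the parabolic arcs, whereas the paper computes $\nabla u_\e$ first and then reads off $K$ as the convex hull of its range.
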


\begin{proof}
We check that for $x\neq 0$
\[
\nabla u(x) = \begin{cases}
  e_2 & \text{ if }x_2>\e |x_1|\\
  \begin{pmatrix} \frac{\e}{2} - \frac{x_2^2}{2\e x_1^2}\\ \frac{x_2}{\e x_1}\end{pmatrix}&\text{ if }|x_2|\leq \e|x_1|\\ 
  -e_2 & \text{ if }x_2<\e |x_1|
\end{cases}
  \]

  In particular, $\nabla u$ is locally Lipschitz in $\R^2\setminus\{0\}$. A direct calculation shows that $u(x) = \max_{y\in\R^2\setminus \{0\}} x\cdot \nabla u(y)$, showing that $u$ is convex.

  Since $\nabla u$ is $0$-homogeneous, $\mu_u = \sigma \delta_0$ for some $\sigma\geq 0$. We calculate
\[
\sigma = \L^2(\conv\{\nabla u(y)\,:\,y\in\R^2\setminus\{0\}\}) = 2\int_{-1}^1 \frac{\e}{2}(1-t^2)\,\d t = \frac{4\e}{3}.  
\]
\end{proof}

\begin{proof}[Proof of Remark \ref{rem:nobc}]
First we construct a solution $v_1$ of $\mu_{v_1} = \overline\mu$  that is in $W^{2,\infty}_{\mathrm{loc}}(\Omega\setminus\{a_1,\dots,a_N\})$. Define for $\e>0$ the horizontal double cone $K_\e:= \{x\in\R^2\,:\,|x_2|\leq \e |x_1|\}$. 

Choose $e\in S^1$ and $\e>0$ depending on $\Omega$ and $a_1,\ldots,a_N$ such that the sets 
\[
K_i:=\Omega\cap (a_i+R_e K_\e), \quad i=1,\dots,N\,,
\]
have mutually positive distance from each other, where $R_e\in SO(2)$ is the rotation mapping $e_1$ to $e$. See Figure \ref{fig:nobc} for an illustration. Then define
\[
v_1(x) := \sum_{i=1}^N \sqrt{\frac{3\sigma_i}{4\e}} u_\e(R_e^T(x-a_i))\, ,
\]
where $u_\e:\R^2\to\R$ is the function from Lemma \ref{lem:nobcaux}. By our choice of $e,\e$, it follows that $\mu_{v_1} = \overline\mu$. Note here that additivity of the Monge-Amp\`ere measure holds only since $\nabla^2 u_\e = 0$ outside of $K_\e$. Hence $\curl\curl(\nabla v_1\otimes\nabla v_1-\nabla v_0\otimes \nabla v_0)=0$ in $W^{-2,2}(\Omega)$. By \cite[Theorem 7.2]{amrouche2006characterizations}\footnote{In \cite{amrouche2006characterizations}  the statement is made for $\R^3$ valued functions, but the relevant proofs in this paper  work out verbatim for $\R^2$ valued functions.} this implies the existence of 
\[
\bu_0\in W^{1,2}(\Omega;\R^2)
\]
such that
\[
\nabla v_1\otimes\nabla v_1-\nabla v_0\otimes \nabla v_0=-(\nabla \bu_0+\nabla \bu_0^T)\,.
\]
Now we define $v_h$ on $B(a_i,h)$ by requiring it to be convex, in $W^{2,\infty}(\Omega)$ with $|\nabla^2v_h|\leq Ch^{-1}$ and 
\[
v_h=v_1, \quad\nabla v_h=\nabla v_1 \quad\text{ on }\partial B(a_i,h)\,.
\] 
Setting $v_h=v_1$ on $\Omega\setminus \bigcup_{i=1}^NB(a_i,h)$ and $\bu_h=\bu_0$ completes our construction. The estimate $E_h(\bu_h,v_h)\leq Ch^2\log\frac{1}{h}$ is the result of a straightforward calculation.
\end{proof}

\begin{figure}[h]
  \centering
  \includegraphics[height=5cm]{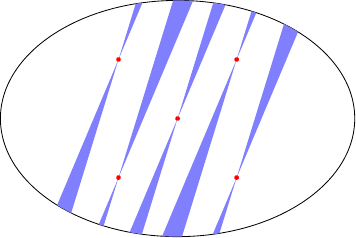}
\caption{Construction of the convex function $v_1$ in the proof of Remark \ref{rem:nobc}. The double cones $K_1,\ldots,K_5$ are shown in blue. Their orientation and width are chosen so as to make them pairwise disjoint, thus ensuring that $\mu_{v_1} = \overline\mu$. \label{fig:nobc}}. \end{figure}

% \section{Proof of Lemma \ref{lem:ridge}}
% \label{sec:ridgeproof}
% \begin{proof}[Proof of Lemma \ref{lem:ridge}]
% We will be brief and refer the interested reader to \cite{conti2015symmetry} for more detail. By suitable rotations and translations of the domain,  we may assume that $[a_ia_jb^+]\subseteq\{(x_1,x_2)\in \R^2:x_2\geq 0$, $[a_ia_jb^-]\subseteq\{(x_1,x_2)\in \R^2:x_2\leq 0$ and \[
%   v_0(x_1,x_2)=\begin{cases} A_1x_1+A_2x_2&\text{ if }x_2\geq 0\\
%     A_1 x_1+A_3x_2&\text{ if }x_2< 0\end{cases}\,,
% \end{split}
% \]
% where $A_1,A_2,A_3\in \R$. There exists a $\gamma\in C^\infty([-1,1])$ such that 
% \[

%\end{proof}

\bibliographystyle{alpha}
\bibliography{connect}

\newcommand{\etalchar}[1]{$^{#1}$}
\begin{thebibliography}{{Olb}16a}

\bibitem[ACGK06]{amrouche2006characterizations}
C.~Amrouche, P.~G. Ciarlet, L.~Gratie, and S.~Kesavan.
\newblock On the characterizations of matrix fields as linearized strain tensor
  fields.
\newblock {\em Journal de math{\'e}matiques pures et appliqu{\'e}es},
  86(2):116--132, 2006.

\bibitem[AP10]{audoly2010elasticity}
B.~Audoly and Y.~Pomeau.
\newblock {\em Elasticity and geometry: from hair curls to the non-linear
  response of shells}.
\newblock Oxford University Press, 2010.

\bibitem[BCDM02]{MR1921161}
H.~{Ben Belgacem}, S.~Conti, A.~DeSimone, and S.~M{\"u}ller.
\newblock Energy scaling of compressed elastic films---three-dimensional
  elasticity and reduced theories.
\newblock {\em Arch. Ration. Mech. Anal.}, 164(1):1--37, 2002.

\bibitem[BJ87]{ball1987fine}
J.~M. Ball and R.~D. James.
\newblock Fine phase mixtures as minimizers of energy.
\newblock {\em Archive for Rational Mechanics and Analysis}, 100:13--52, 1987.

\bibitem[BK14]{MR3179665}
P.~Bella and R.V. Kohn.
\newblock Wrinkles as the result of compressive stresses in an annular thin
  film.
\newblock {\em Comm. Pure Appl. Math.}, 67(5):693--747, 2014.

\bibitem[BK17]{MR3646084}
P.~Bella and R.~V. Kohn.
\newblock Wrinkling of a thin circular sheet bounded to a spherical substrate.
\newblock {\em Philos. Trans. Roy. Soc. A}, 375(2093):20160157, 20, 2017.

\bibitem[BKN13]{MR3102597}
J.~Brandman, R.~V. Kohn, and H.-M. Nguyen.
\newblock Energy scaling laws for conically constrained thin elastic sheets.
\newblock {\em J. Elasticity}, 113(2):251--264, 2013.

\bibitem[CC95]{caffarelli1995fully}
L.~A. Caffarelli and X.~Cabr{\'e}.
\newblock {\em Fully nonlinear elliptic equations}, volume~43.
\newblock American Mathematical Soc., 1995.

\bibitem[CM98]{PhysRevLett.80.2358}
E.~Cerda and L.~Mahadevan.
\newblock Conical surfaces and crescent singularities in crumpled sheets.
\newblock {\em Phys. Rev. Lett.}, 80:2358--2361, Mar 1998.

\bibitem[CM05]{Cerda08032005}
E.~Cerda and L.~Mahadevan.
\newblock Confined developable elastic surfaces: cylinders, cones and the
  elastica.
\newblock {\em Proc. Roy. Soc. London Ser. A}, 461(2055):671--700, 2005.

\bibitem[CM08]{MR2358334}
S.~Conti and F.~Maggi.
\newblock Confining thin elastic sheets and folding paper.
\newblock {\em Arch. Ration. Mech. Anal.}, 187(1):1--48, 2008.

\bibitem[COT17]{conti2015symmetry}
S.~Conti, H.~Olbermann, and I.~Tobasco.
\newblock Symmetry breaking in indented elastic cones.
\newblock {\em Math. Mod. Meth. Appl. S.}, 27:291--321, 2017.

\bibitem[DW01]{PhysRevLett.87.206105}
B.A. DiDonna and T.A. Witten.
\newblock Anomalous strength of membranes with elastic ridges.
\newblock {\em Phys. Rev. Lett.}, 87:206105, Oct 2001.

\bibitem[EG92]{evans1992measure}
L.C. Evans and R.F. Gariepy.
\newblock {\em Measure Theory and Fine Properties of Functions}.
\newblock CRC Press, 1992.

\bibitem[Fig17]{figalli2017monge}
A.~Figalli.
\newblock {\em The Monge--Amp{\`e}re equation and its applications}.
\newblock EMS, 2017.

\bibitem[FJM02]{MR1916989}
G.~Friesecke, R.D. James, and S.~M{\"u}ller.
\newblock A theorem on geometric rigidity and the derivation of nonlinear plate
  theory from three-dimensional elasticity.
\newblock {\em Comm. Pure Appl. Math.}, 55(11):1461--1506, 2002.

\bibitem[FJM06]{MR2210909}
G.~Friesecke, R.D. James, and S.~M{\"u}ller.
\newblock A hierarchy of plate models derived from nonlinear elasticity by
  {G}amma-convergence.
\newblock {\em Arch. Ration. Mech. Anal.}, 180(2):183--236, 2006.

\bibitem[FJMM03]{MR1988135}
G.~Friesecke, R.~D. James, M.~G. Mora, and S.~M\"uller.
\newblock Derivation of nonlinear bending theory for shells from
  three-dimensional nonlinear elasticity by {G}amma-convergence.
\newblock {\em C. R. Math. Acad. Sci. Paris}, 336(8):697--702, 2003.

\bibitem[FM18]{MR3766980}
A.~Figalli and C.~Mooney.
\newblock An obstacle problem for conical deformations of thin elastic sheets.
\newblock {\em Arch. Ration. Mech. Anal.}, 228(2):401--429, 2018.

\bibitem[GH14]{MR3262603}
Y.~Grabovsky and D.~Harutyunyan.
\newblock Exact scaling exponents in {K}orn and {K}orn-type inequalities for
  cylindrical shells.
\newblock {\em SIAM J. Math. Anal.}, 46(5):3277--3295, 2014.

\bibitem[GO24]{gladbach2024variational}
P.~Gladbach and H.~Olbermann.
\newblock Variational competition between the full {H}essian and its
  determinant for convex functions.
\newblock {\em Nonlinear Analysis}, 242:113498, 2024.

\bibitem[GR90]{griewank1990smoothness}
A.~Griewank and P.J. Rabier.
\newblock On the smoothness of convex envelopes.
\newblock {\em Transactions of the American Mathematical Society},
  322(2):691--709, 1990.

\bibitem[GT07]{MR2326996}
Y.~Grabovsky and L.~Truskinovsky.
\newblock The flip side of buckling.
\newblock {\em Contin. Mech. Thermodyn.}, 19(3-4):211--243, 2007.

\bibitem[Har17]{MR3687880}
D.~Harutyunyan.
\newblock Gaussian curvature as an identifier of shell rigidity.
\newblock {\em Arch. Ration. Mech. Anal.}, 226(2):743--766, 2017.

\bibitem[KK01]{MR1868942}
B.~Kirchheim and J.~Kristensen.
\newblock Differentiability of convex envelopes.
\newblock {\em C. R. Acad. Sci. Paris S\'{e}r. I Math.}, 333(8):725--728, 2001.

\bibitem[KM92]{MR1293775}
R.~V. Kohn and S.~M{\"u}ller.
\newblock Relaxation and regularization of nonconvex variational problems.
\newblock In {\em Proceedings of the {S}econd {I}nternational {C}onference on
  {P}artial {D}ifferential {E}quations ({I}talian) ({M}ilan, 1992)}, volume~62,
  pages 89--113 (1994), 1992.

\bibitem[KM94]{MR1272383}
R.~V. Kohn and S.~M{\"u}ller.
\newblock Surface energy and microstructure in coherent phase transitions.
\newblock {\em Comm. Pure Appl. Math.}, 47(4):405--435, 1994.

\bibitem[KN13]{kohn2013analysis}
R.V. Kohn and H.-M. Nguyen.
\newblock Analysis of a compressed thin film bonded to a compliant substrate:
  the energy scaling law.
\newblock {\em J. Nonlinear Sci.}, 23(3):343--362, 2013.

\bibitem[Kui55]{MR0075640}
N.H. Kuiper.
\newblock On {$C^1$}-isometric imbeddings. {I}, {II}.
\newblock {\em Nederl. Akad. Wetensch. Proc. Ser. A. 58, Indag. Math.},
  17:545--556, 683--689, 1955.

\bibitem[LDR95]{le1995nonlinear}
H.~Le~Dret and A.~Raoult.
\newblock The nonlinear membrane model as variational limit of nonlinear
  three-dimensional elasticity.
\newblock {\em Journal de math{\'e}matiques pures et appliqu{\'e}es},
  74(6):549--578, 1995.

\bibitem[Lew20]{MR4076073}
M.~Lewicka.
\newblock Quantitative immersability of {R}iemann metrics and the infinite
  hierarchy of prestrained shell models.
\newblock {\em Arch. Ration. Mech. Anal.}, 236(3):1677--1707, 2020.

\bibitem[LGL{\etalchar{+}}95]{Lobkovsky01121995}
A.~Lobkovsky, S.~Gentges, H.~Li, D.~Morse, and T.A. Witten.
\newblock Scaling properties of stretching ridges in a crumpled elastic sheet.
\newblock {\em Science}, 270(5241):1482--1485, 1995.

\bibitem[LM11]{MR2795715}
M.~Lewicka and S.~M\"uller.
\newblock The uniform {K}orn-{P}oincar\'e{} inequality in thin domains.
\newblock {\em Ann. Inst. H. Poincar\'e{} C Anal. Non Lin\'eaire},
  28(3):443--469, 2011.

\bibitem[LMP10]{MR2731157}
M.~Lewicka, M.~G. Mora, and M.~R. Pakzad.
\newblock Shell theories arising as low energy {$\Gamma$}-limit of 3d nonlinear
  elasticity.
\newblock {\em Ann. Sc. Norm. Super. Pisa Cl. Sci. (5)}, 9(2):253--295, 2010.

\bibitem[LMP11]{MR2796137}
M.~Lewicka, M.~G. Mora, and M.~Reza Pakzad.
\newblock The matching property of infinitesimal isometries on elliptic
  surfaces and elasticity of thin shells.
\newblock {\em Arch. Ration. Mech. Anal.}, 200(3):1023--1050, 2011.

\bibitem[LW05]{PhysRevE.71.016612}
T.~Liang and T.A. Witten.
\newblock Crescent singularities in crumpled sheets.
\newblock {\em Phys. Rev. E}, 71:016612, Jan 2005.

\bibitem[MO14]{2012arXiv1208.4298M}
S.~{M{\"u}ller} and H.~{Olbermann}.
\newblock Conical singularities in thin elastic sheets.
\newblock {\em Calc.~Var.~PDE}, 49:1177--1186, 2014.

\bibitem[Nas54]{MR0065993}
J.~Nash.
\newblock {$C^1$} isometric imbeddings.
\newblock {\em Ann. Math. (2)}, 60:383--396, 1954.

\bibitem[{Olb}16a]{Olbreg}
H.~{Olbermann}.
\newblock Energy scaling law for the regular cone.
\newblock {\em J. Nonlinear Sci.}, 26:287--314, 2016.

\bibitem[Olb16b]{olbermann2014one}
H.~Olbermann.
\newblock The one-dimensional model for d-cones revisited.
\newblock {\em Adv. Calc. Var.}, 9:201--216, 2016.

\bibitem[{Olb}17]{Olbdisc}
H.~{Olbermann}.
\newblock Energy scaling law for a single disclination in a thin elastic sheet.
\newblock {\em Arch. Ration. Mech. Anal.}, 224:985--1019, 2017.

\bibitem[Olb18]{olbermann2018shape}
H.~Olbermann.
\newblock The shape of low energy configurations of a thin elastic sheet with a
  single disclination.
\newblock {\em Analysis \& PDE}, 11(5):1285--1302, 2018.

\bibitem[Olb19]{olber2019crump}
H.~Olbermann.
\newblock On a boundary value problem for conically deformed thin elastic
  sheets.
\newblock {\em Analysis {\&} PDE}, 12:245--258, 2019.

\bibitem[PDK{\etalchar{+}}08]{pocivavsek2008stress}
L.~Pocivavsek, R.~Dellsy, A.~Kern, S.~Johnson, B.~Lin, K.~Y.~C. Lee, and
  E.~Cerda.
\newblock Stress and fold localization in thin elastic membranes.
\newblock {\em Science}, 320(5878):912--916, 2008.

\bibitem[Roc70]{MR0274683}
R.~T. Rockafellar.
\newblock {\em Convex analysis}, volume No. 28 of {\em Princeton Mathematical
  Series}.
\newblock Princeton University Press, Princeton, NJ, 1970.

\bibitem[Tar07]{tartar2007introduction}
L.~Tartar.
\newblock {\em An introduction to Sobolev spaces and interpolation spaces},
  volume~3.
\newblock Springer Science \& Business Media, 2007.

\bibitem[Tob18]{MR3745154}
I.~Tobasco.
\newblock Axial compression of a thin elastic cylinder: bounds on the minimum
  energy scaling law.
\newblock {\em Comm. Pure Appl. Math.}, 71(2):304--355, 2018.

\bibitem[Tob21]{tobasco2021curvature}
I.~Tobasco.
\newblock Curvature-driven wrinkling of thin elastic shells.
\newblock {\em Archive for Rational Mechanics and Analysis}, 239(3):1211--1325,
  2021.

\bibitem[Ven04]{MR2023444}
S.C. Venkataramani.
\newblock Lower bounds for the energy in a crumpled elastic sheet---a minimal
  ridge.
\newblock {\em Nonlinearity}, 17(1):301--312, 2004.

\bibitem[Wit07]{RevModPhys.79.643}
T.A. Witten.
\newblock Stress focusing in elastic sheets.
\newblock {\em Rev. Mod. Phys.}, 79:643--675, Apr 2007.

\bibitem[WL93]{witten1993asymptotic}
T.A. Witten and H.~Li.
\newblock Asymptotic shape of a fullerene ball.
\newblock {\em Europhys. Lett.}, 23(1):51, 1993.

\end{thebibliography}

\end{document}